\documentclass[a4paper,12pt]{article}
\setlength{\textwidth}{16cm}
\setlength{\textheight}{23cm}
\setlength{\oddsidemargin}{0mm}
\setlength{\topmargin}{-1cm}

\usepackage{latexsym}
\usepackage{amsmath}
\usepackage{amssymb}
\usepackage{enumerate}

\usepackage{theorem}
\newtheorem{theorem}{Theorem}[section]
\newtheorem{proposition}[theorem]{Proposition}
\newtheorem{lemma}[theorem]{Lemma}
\newtheorem{corollary}[theorem]{Corollary}
\newtheorem{claim}[theorem]{Claim}

\theorembodyfont{\rmfamily}
\newtheorem{proof}{\textmd{\textit{Proof.}}}

\newtheorem{remark}[theorem]{Remark}
\newtheorem{example}[theorem]{Example}
\newtheorem{definition}[theorem]{Definition}

\makeatletter

\@addtoreset{equation}{section}
\makeatother

\newcommand{\qedd}{\hfill \Box}
\newcommand{\ve}{\varepsilon}

\newcommand{\lra}{\longrightarrow}
\newcommand{\grad}{\nabla\!\!_-}
\newcommand{\e}{\mathrm{e}}

\newcommand{\R}{\ensuremath{\mathbb{R}}}

\newcommand{\cC}{\ensuremath{\mathcal{C}}}
\newcommand{\cD}{\ensuremath{\mathcal{D}}}
\newcommand{\cP}{\ensuremath{\mathcal{P}}}
\newcommand{\bJ}{\ensuremath{\mathbf{J}}}
\newcommand{\bT}{\ensuremath{\mathbf{T}}}
\newcommand{\fs}{\ensuremath{\mathfrak{s}}}
\newcommand{\fc}{\ensuremath{\mathfrak{c}}}
\newcommand{\fm}{\ensuremath{\mathfrak{m}}}
\newcommand{\fL}{\ensuremath{\mathfrak{L}}}

\def\id{\mathop{\mathrm{id}}\nolimits}
\def\vol{\mathop{\mathrm{vol}}\nolimits}
\def\loc{\mathop{\mathrm{loc}}\nolimits}
\def\diam{\mathop{\mathrm{diam}}\nolimits}
\def\supp{\mathop{\mathrm{supp}}\nolimits}
\def\trace{\mathop{\mathrm{trace}}\nolimits}
\def\Hess{\mathop{\mathrm{Hess}}\nolimits}
\def\Ric{\mathop{\mathrm{Ric}}\nolimits}
\def\Ent{\mathop{\mathrm{Ent}}\nolimits}
\def\CD{\mathop{\mathrm{CD}}\nolimits}

\def\EVI{\mathop{\mathrm{EVI}}\nolimits}
\def\DC{\mathop{\mathcal{DC}}\nolimits}

\title{$(K,N)$-convexity and the curvature-dimension condition for negative $N$}
\author{Shin-ichi OHTA\thanks{Department of Mathematics, Kyoto University,
Kyoto 606-8502, Japan ({\sf sohta@math.kyoto-u.ac.jp});
Supported in part by the Grant-in-Aid for Young Scientists (B) 23740048.}}
\date{}
\pagestyle{plain}

\begin{document}

\maketitle

\begin{abstract}
We extend the range of $N$ to negative values in the $(K,N)$-convexity
(in the sense of Erbar--Kuwada--Sturm), the weighted Ricci curvature $\Ric_N$
and the curvature-dimension condition $\CD(K,N)$.
We generalize a number of results in the case of $N>0$ to this setting,
including Bochner's inequality, the Brunn--Minkowski inequality
and the equivalence between $\Ric_N \ge K$ and $\CD(K,N)$.
We also show an expansion bound for gradient flows of Lipschitz $(K,N)$-convex functions.
\end{abstract}

\tableofcontents

\section{Introduction}

The theories of the curvature-dimension condition and the weighted Ricci curvature
are making rapid progress in this decade.
The \emph{curvature-dimension condition} $\CD(K,N)$ of a metric measure space $(X,d,\fm)$
is a kind of convexity condition of an entropy function on the space of probability measures on $X$.
Here $K \in \R$ and $N \in [1,\infty]$ are parameters,
and the simplest case of $\CD(K,\infty)$ is defined by
the $K$-convexity of the relative entropy with respect to $\fm$.
Sometimes $\CD(K,N)$ is regarded as the combination of the lower Ricci curvature bound $\Ric \ge K$
and the upper dimension bound $\dim \le N$,
and this is the case (i.e., equivalent) for Riemannian manifolds equipped with volume measures.
Generally, for Riemannian (and Finsler) manifolds with weighted measures,
$\CD(K,N)$ is equivalent to the lower bound of the \emph{weighted Ricci curvature} $\Ric_N \ge K$.
By a weighted measure we mean a measure $\fm=\e^{-\psi}\vol_g$
on a Riemannian manifold $(M,g)$.
Then it is natural to modify the Ricci curvature by using the weight function $\psi$.
This is how the weighted Ricci curvature $\Ric_N$ shows up,
where the parameter $N$ depends on the property in question.

Recently, a deep progress was made by Erbar, Kuwada and Sturm~\cite{EKS}.
They introduced the \emph{$(K,N)$-convexity} for $K \in \R$ and $N \in (0,\infty)$,
reinforcing the $K$-convexity.
The $(K,N)$-convexity of the relative entropy is called
the \emph{entropic curvature-dimension condition} $\CD^e(K,N)$,
which turns out equivalent to $\CD(K,N)$ on Riemannian manifolds
and has striking applications in the general metric measure setting
including an expansion bound of heat flow,
the Bakry--Ledoux gradient estimate and Bochner's inequality
(\cite[Theorem~7]{EKS}).
This gives a finite-dimensional (i.e., $N<\infty$) counterpart
of Ambrosio, Gigli and Savar\'e's influential work~\cite{AGSrcd},
and there are already a number of fruitful applications and related works
(see \cite{BGG}, \cite{GM}, \cite{HKX}, \cite{Ku}).

The aim of this article is to point out that it is possible and meaningful
to extend the range of $N$ to negative values in these theories
of $(K,N)$-convexity, $\Ric_N$ and $\CD(K,N)$.
The $(K,N)$-convexity (resp.\ $\CD(K,N)$) with $N<0$
is weaker than the $K$-convexity (resp.\ $\CD(K,\infty)$),
thus it covers a wider class of functions (resp.\ spaces).
See Example~\ref{ex:KN} and Corollaries~\ref{cr:weit}, \ref{cr:prod} for some examples.
Admitting $N<0$ in $\Ric_N$ and $\CD(K,N)$ may sound strange
if one sticks to the image that $N$ represents an upper bound of the dimension,
however, its usefulness has already been recognized
in the author's work \cite{OT1}, \cite{OT2} with Takatsu
(see also a related work \cite{Ot} in the PDE theory).
In \cite{OT1} and \cite{OT2}, the convexity of a certain generalization of the relative entropy
(inspired by information theory)
is characterized by the combination of $\Ric_N \ge 0$ and the convexity of another weight function,
and $N$ can be negative (depending on the choice of an entropy).

We briefly explain the contents of the following sections.
In Section~\ref{sc:conv}, we give the definition of $(K,N)$-convex functions
and study their properties, including the \emph{evolution variational inequality}
along gradient curves in the Riemannian setting (Lemma~\ref{lm:EVI}).
In Section~\ref{sc:gf}, we derive some regularizing estimates
from the evolution variational inequality.
We also show an expansion bound for gradient flows of Lipschitz $(K,N)$-convex functions
on Riemannian manifolds (Theorem~\ref{th:exp}).
In Section~\ref{sc:CD}, we introduce $\Ric_N$ and generalize Bochner's inequality
(Theorem~\ref{th:BW}) as well as the Lichnerowicz inequality (Corollary~\ref{cr:Lich}).
Then we define $\CD(K,N)$ and extend the equivalence between
$\CD(K,N)$ and $\Ric_N \ge K$ to $N<0$ (Theorem~\ref{th:CD}).
Finally, we see that the analogue of $\CD^e(K,N)$ implies several functional inequalities.

Although the proofs are parallel to the case of $N>0$ to a large extent,
we give at least sketches for completeness.
Compared to the $N>0$ case, there remain many open questions for $N<0$.
Especially,
\begin{enumerate}[(a)]
\item an expansion bound for general $(K,N)$-convex functions
(guaranteeing the uniqueness of $\EVI_{K,N}$-gradient curves; see Remark~\ref{rm:Lip}),

\item gradient estimates related to $\Ric_N \ge K$ (see Remark~\ref{rm:hf}),

\item a reasonable sufficient condition (or characterization) of $\CD^e(K,N)$
for weighted Riemannian manifolds (see Remark~\ref{rm:CDe})
\end{enumerate}
are important problems
((a) and (b) are closely related via the duality; see \cite{EKS}, \cite{Ku}).

After completing this article, the author learned of Kolesnikov and Milman's
recent work \cite{KM} in which $\Ric_N$ for $N \in (-\infty,0]$ is also considered
(note that $N=0$ is admitted).
By using Bochner's inequality same as \eqref{eq:NBW}
(or the Reilly formula when the boundary is nonempty),
they obtained various Poincar\'e-type inequalities on weighted Riemannian manifolds
(and their boundaries).
See \cite{KM}, \cite{MR} and the references therein for further related works
concerning the ``$N \le 0$'' case on the Euclidean spaces, such as
Borell's convex (or $1/N$-concave) measures
(see \cite{Bo}, \cite{BrLi}, and the paragraph following Theorem~\ref{th:BM})
and a connection with Barenblatt solutions to the porous medium equation
(see \cite{Ot}, \cite{BoLe}).

\bigskip
\noindent\emph{Acknowledgements.}
I am grateful to Kazumasa Kuwada for valuable suggestions and discussions,
especially on the expansion bound in Subsection~\ref{ssc:cont}.
I thank Asuka Takatsu for fruitful discussions,
some of the results in Subsections~\ref{ssc:wRic}, \ref{ssc:CD}
originate from discussions during the joint work \cite{OT1}, \cite{OT2}.
My gratitude also goes to Frank Morgan for drawing my attention to \cite{MR} and \cite{KM},
and to Emanuel Milman for his helpful comments on the background of \cite{MR} and \cite{KM}.

\section{$(K,N)$-convex functions}\label{sc:conv}

We introduce $(K,N)$-convex functions and study their properties on Riemannian manifolds
and then on metric spaces.
We can follow the line of the $N>0$ case in \cite{EKS} (while some inequalities are reversed),
except for Lemma~\ref{lm:sum} in which we have to take care of the ranges of $N_1$ and $N_2$.

\subsection{$(K,N)$-convex functions on Riemannian manifolds}\label{ssc:Riem}

Our Riemannian manifold $(M,g)$ will be always connected, complete,
$\cC^{\infty}$ and without boundary.
Denote by $d_g$ its Riemannian distance.
According to \cite{EKS}, for $K \in \R$ and $N>0$,
we say that a function $f \in \cC^2(M)$ is \emph{$(K,N)$-convex} if
\begin{equation}\label{eq:KN}
\Hess f(v,v) -\frac{\langle \nabla f,v \rangle^2}{N} \ge K|v|^2 \qquad \text{for all}\ v \in TM.
\end{equation}
This reinforces the usual \emph{$K$-convexity} $\Hess f(v,v) \ge K|v|^2$.
We adopt the same definition \eqref{eq:KN} for $N<0$
and shall see that a number of results in \cite{EKS} can be extended,
although it is weaker than the $K$-convexity.

Let $N<0$ throughout the article without otherwise being indicated.
Given $f:M \lra \R$, it is useful to consider the function
\[ f_N(x):=\e^{-f(x)/N}. \]
By calculation, the $(K,N)$-convexity \eqref{eq:KN} is equivalent to
\begin{equation}\label{eq:KN'}
\Hess f_N(v,v) \ge -\frac{K}{N}f_N(x)|v|^2 \qquad \text{for all}\ v \in T_xM,\ x \in M.
\end{equation}
To rewrite the $(K,N)$-convexity in integrated forms, we introduce the functions
\[ \fs_{\kappa}(\theta) := \left\{
 \begin{array}{cl}
 \frac{1}{\sqrt{\kappa}} \sin(\sqrt{\kappa}\theta) & \text{if}\ \kappa>0, \\
 \theta & \text{if}\ \kappa=0, \\
 \frac{1}{\sqrt{-\kappa}} \sinh(\sqrt{-\kappa}\theta) & \text{if}\ \kappa<0,
 \end{array} \right. \quad
\fc_{\kappa}(\theta) := \left\{
 \begin{array}{cl}
 \cos(\sqrt{\kappa}\theta) & \text{if}\ \kappa>0, \\
 1 & \text{if}\ \kappa=0, \\
 \cosh(\sqrt{-\kappa}\theta) & \text{if}\ \kappa<0
 \end{array} \right. \]
for $\kappa \in \R$ and $\theta \ge 0$.
These are solutions to $u''+\kappa u=0$ with the initial conditions
$\fs_{\kappa}(0)=\fc_{\kappa}'(0)=0$ and $\fs_{\kappa}'(0)=\fc_{\kappa}(0)=1$.
We will use the relations
\begin{equation}\label{eq:cs}
\fc_{\kappa}(\theta) =1-2\kappa \fs_{\kappa}\bigg( \frac{\theta}{2} \bigg)^2,
 \qquad \fs_{\kappa}(\theta)
 =2\fs_{\kappa}\bigg( \frac{\theta}{2} \bigg) \fc_{\kappa}\bigg( \frac{\theta}{2} \bigg).
\end{equation}
We also define, for $t \in [0,1]$,
\[ \sigma_{\kappa}^{(t)}(\theta):=\frac{\fs_{\kappa}(t\theta)}{\fs_{\kappa}(\theta)}, \]
where $\theta>0$ if $\kappa \le 0$ and $\theta \in (0,\pi/\sqrt{\kappa})$ if $\kappa>0$.
Set also $\sigma^{(t)}_{\kappa}(0):=t$.

\begin{lemma}\label{lm:KN}
For $f \in \cC^2(M)$, the following are equivalent$:$
\begin{enumerate}[{\rm (i)}]
\item $f$ is $(K,N)$-convex.

\item Along every minimal geodesic $\gamma:[0,1] \lra M$
with $d:=d_g(\gamma(0),\gamma(1))<\pi\sqrt{N/K}$ if $K<0$, we have
\begin{equation}\label{eq:wKN}
f_N\big( \gamma(t) \big) \le \sigma^{(1-t)}_{K/N}(d) f_N\big( \gamma(0) \big)
 +\sigma^{(t)}_{K/N}(d) f_N\big( \gamma(1) \big)
\end{equation}
for all $t \in [0,1]$.

\item Along any nonconstant minimal geodesic $\gamma:[0,1] \lra M$
with $d:=d_g(\gamma(0),\gamma(1))<\pi\sqrt{N/K}$ if $K<0$, we have
\begin{equation}\label{eq:wKN'}
f_N\big( \gamma(1) \big) \ge \fc_{K/N}(d) f_N\big( \gamma(0) \big)
 +\frac{\fs_{K/N}(d)}{d} (f_N \circ \gamma)'(0).
\end{equation}
\end{enumerate}
\end{lemma}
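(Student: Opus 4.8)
The plan is to prove the cycle (i) $\Rightarrow$ (ii) $\Rightarrow$ (iii) $\Rightarrow$ (i), working throughout with $f_N$ and the rewritten condition \eqref{eq:KN'}, which the preceding computation already identifies with (i). Set $\kappa:=K/N$. The guiding observation is that restricting $f_N$ to a constant-speed minimal geodesic converts the pointwise bound \eqref{eq:KN'} into a one-dimensional differential inequality whose extremal solutions are exactly the $\sigma_\kappa$-interpolations in \eqref{eq:wKN}.

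For (i) $\Rightarrow$ (ii), I would fix a minimal geodesic $\gamma:[0,1]\to M$ with $|\dot\gamma|\equiv d:=d_g(\gamma(0),\gamma(1))$ and put $h(t):=f_N(\gamma(t))$. Since $\nabla_{\dot\gamma}\dot\gamma=0$, we have $\ddot h(t)=\Hess f_N(\dot\gamma,\dot\gamma)$, so \eqref{eq:KN'} yields $\ddot h(t)\ge -\kappa d^2 h(t)$ on $[0,1]$ (using $-\kappa f_N\,|\dot\gamma|^2=-\kappa d^2 h$). The right-hand side of \eqref{eq:wKN}, as a function of $t$, is precisely the solution $G$ of $\ddot G+\kappa d^2 G=0$ with $G(0)=h(0)$ and $G(1)=h(1)$, as one checks from $\fs_\kappa''=-\kappa\fs_\kappa$ and $\fs_\kappa(0)=0$. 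The desired inequality $h\le G$ is then a standard Sturm/maximum-principle comparison for $w:=h-G$, which satisfies $\ddot w+\kappa d^2 w\ge 0$ and $w(0)=w(1)=0$; the hypothesis $d<\pi\sqrt{N/K}$ when $K<0$ (equivalently $\kappa d^2<\pi^2$) is exactly what keeps the Dirichlet operator $\frac{d^2}{dt^2}+\kappa d^2$ below its first eigenvalue, so that its Green's function has a fixed sign and forces $w\le 0$.

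For (ii) $\Rightarrow$ (iii), I would differentiate \eqref{eq:wKN} at $t=0$, where it holds with equality. Using $\sigma_\kappa^{(t)}(d)=\fs_\kappa(td)/\fs_\kappa(d)$, $\fs_\kappa'(0)=1$ and $\fs_\kappa'=\fc_\kappa$, the right derivatives at $0$ are $\frac{d}{dt}\big|_{t=0}\sigma_\kappa^{(t)}(d)=d/\fs_\kappa(d)$ and $\frac{d}{dt}\big|_{t=0}\sigma_\kappa^{(1-t)}(d)=-d\,\fc_\kappa(d)/\fs_\kappa(d)$. Taking the right derivative of \eqref{eq:wKN} and multiplying by $\fs_\kappa(d)/d>0$ (positive precisely under the stated range of $d$) rearranges the resulting inequality into \eqref{eq:wKN'}, since $(f_N\circ\gamma)'(0)=h'(0)$.

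For (iii) $\Rightarrow$ (i), I would test \eqref{eq:wKN'} on short geodesics: fix $x\in M$ and a unit vector $u$, and apply it to $\gamma_\ve(t):=\exp_x(t\ve u)$ with $d=\ve\to 0$. A second-order Taylor expansion, using $\fc_\kappa(\ve)=1-\tfrac{\kappa}{2}\ve^2+o(\ve^2)$, $\fs_\kappa(\ve)=\ve+o(\ve^2)$, and $\frac{d^2}{ds^2}\big|_{s=0} f_N(\exp_x(su))=\Hess f_N(u,u)$, makes the zeroth- and first-order terms cancel and leaves $\Hess f_N(u,u)\ge -\kappa f_N(x)$ in the limit; by homogeneity this is \eqref{eq:KN'}, hence (i). The only genuinely delicate point is the comparison step inside (i) $\Rightarrow$ (ii): one must track the reversed roles of the signs caused by $N<0$ (so that $-f_N/N>0$ while $\kappa$ may have either sign) and invoke the threshold $d<\pi\sqrt{N/K}$ exactly where the positivity $\fs_\kappa(d)>0$ and the sign-definiteness of the Green's function are used. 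Everything else is the $N>0$ argument of \cite{EKS} with the inequalities read in the appropriate direction.
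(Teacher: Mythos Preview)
Your proposal is correct and follows essentially the same route as the paper: reduce to the one-dimensional ODE comparison for $h=f_N\circ\gamma$ versus the solution $G$ of $G''+\kappa d^2 G=0$ for (i) $\Rightarrow$ (ii), differentiate at an endpoint using $\fs_\kappa'=\fc_\kappa$ for (ii) $\Rightarrow$ (iii), and Taylor-expand on short geodesics for (iii) $\Rightarrow$ (i). The only cosmetic difference is in (iii) $\Rightarrow$ (i): the paper applies \eqref{eq:wKN'} to the pair $\gamma_\pm$ with $\dot\gamma_\pm(0)=\pm v$ and adds, so the first-order terms cancel by symmetry and the second-order content drops out immediately, whereas you expand along a single geodesic and watch the first-order terms cancel by hand; both are equally valid and equally short.
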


\begin{proof}
The proof is same as \cite[Lemma~2.2]{EKS}.

(i) $\Rightarrow$ (ii):
Denote by $h(t)$ the RHS of \eqref{eq:wKN}, and compare $h''(t)=-(K/N)h(t)d^2$
with \eqref{eq:KN'}.

(ii) $\Rightarrow$ (iii):
This is immediate from $\fs_{K/N}'=\fc_{K/N}$.

(iii) $\Rightarrow$ (i):
For any $v \in T_xM$, applying \eqref{eq:wKN'} to the geodesics $\gamma_{\pm}$
with $\dot{\gamma}_+(0)=v$ and $\dot{\gamma}_-(0)=-v$, we have
\[ f_N\big( \gamma_+(\ve) \big) +f_N\big( \gamma_-(\ve) \big)
 \ge 2\fc_{K/N}(\ve |v|) f_N(x)
 =2 \left\{ 1-\frac{K}{2N}\ve^2 |v|^2 +O(\ve^4) \right\} f_N(x) \]
for small $\ve>0$.
This shows \eqref{eq:KN'}.
$\qedd$
\end{proof}

Notice that \eqref{eq:wKN} does not require the differentiability of $f$.
This leads us to a metric definition of the $(K,N)$-convexity in the next subsection
(see Definition~\ref{df:KN}).

\begin{remark}\label{rm:KN}
In the case of $K<0$, due to the condition $d<\pi\sqrt{N/K}$
coming naturally from the domain of $\sigma^{(t)}_{K/N}$,
\eqref{eq:wKN} and \eqref{eq:wKN'} can control the behavior of $f$
only in balls with radii less than $\pi\sqrt{N/K}$.
\end{remark}

An important advantage in discussing on a Riemannian manifold is the following
\emph{evolution variational inequality} \eqref{eq:EVI}.
We say that a $\cC^1$-curve $\xi:[0,T) \lra M$ is a \emph{gradient curve} of
$f \in \cC^1(M)$ if $\dot{\xi}(t) =-\nabla f(\xi(t))$ for all $t \in (0,T)$.

\begin{lemma}[Evolution variational inequality]\label{lm:EVI}
Let $f \in \cC^1(M)$.
\begin{enumerate}[{\rm (i)}]
\item
If $f$ is $(K,N)$-convex in the sense of \eqref{eq:wKN'},
then every gradient curve $\xi:[0,T) \lra M$ of $f$ enjoys
\begin{equation}\label{eq:EVI}
\frac{d}{dt}\bigg[ \fs_{K/N}\left( \frac{d_g(\xi(t),z)}{2} \right)^2 \bigg]
 +K \fs_{K/N}\left( \frac{d_g(\xi(t),z)}{2} \right)^2
 \le \frac{N}{2}\left\{ 1-\frac{f_N(z)}{f_N(\xi(t))} \right\}
\end{equation}
for all $z \in M$ and almost all $t \in (0,T)$ with $d_g(\xi(t),z)<\pi\sqrt{N/K}$ if $K<0$.

\item
If \eqref{eq:EVI} holds along a $\cC^1$-curve $\xi:[0,T) \lra M$,
then $\xi$ is a gradient curve of $f$.

\item
If \eqref{eq:EVI} holds for all gradient curves $\xi$ of $f$, then $f$ is $(K,N)$-convex.
\end{enumerate}
\end{lemma}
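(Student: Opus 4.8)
All three parts rest on one computation which I would carry out first: at a time $t$ where $\xi(t)$ lies off the cut locus of $z$ (so that $r(t):=d_g(\xi(t),z)$ is differentiable), \eqref{eq:EVI} is \emph{equivalent} to the convexity estimate \eqref{eq:wKN'} applied to the minimal geodesic $\gamma$ from $\gamma(0)=\xi(t)$ to $\gamma(1)=z$. To prove (i) I would fix $z$ and differentiate $\fs_{K/N}(r/2)^2$, using $\fs_{K/N}'=\fc_{K/N}$ and the doubling identity $\fs_{K/N}(r)=2\fs_{K/N}(r/2)\fc_{K/N}(r/2)$ from \eqref{eq:cs} to get $\frac{d}{dt}\fs_{K/N}(r/2)^2=\tfrac12\fs_{K/N}(r)\,\dot r$. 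The first variation formula gives $\dot r=\langle\nabla d_z(\xi(t)),\dot\xi(t)\rangle$ with $\nabla d_z(\xi(t))=-\dot\gamma(0)/r$, which together with $\dot\xi=-\nabla f$ and $\nabla f_N=-\tfrac1N f_N\nabla f$ rewrites $\dot r$ through $(f_N\circ\gamma)'(0)$. Substituting \eqref{eq:wKN'}, dividing by $f_N(\xi(t))>0$, and applying $\fc_{K/N}(r)=1-\tfrac{2K}{N}\fs_{K/N}(r/2)^2$ from \eqref{eq:cs} reorganizes everything into \eqref{eq:EVI}. The delicate point is that clearing the factor $N<0$ reverses the inequality, and it is exactly this reversal that places the convexity estimate on the correct side of \eqref{eq:EVI}.

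For (ii) I would probe \eqref{eq:EVI} with points $z=\exp_{\xi(t)}(\ve u)$, $|u|=1$; since $s\mapsto d_g(\xi(s),z)$ is then smooth near $t$, the inequality holds there by continuity. A first-order expansion in $\ve$ yields $\frac{d}{dt}\fs_{K/N}(r/2)^2\sim-\tfrac\ve2\langle u,\dot\xi(t)\rangle$ on the left and $\frac N2\{1-f_N(z)/f_N(\xi(t))\}\sim\tfrac\ve2\langle\nabla f,u\rangle$ on the right, the $K$-term being $O(\ve^2)$. Dividing by $\ve/2$ and letting $\ve\downarrow0$ gives $\langle\nabla f(\xi(t))+\dot\xi(t),u\rangle\ge0$ for every unit $u$, hence $\dot\xi(t)=-\nabla f(\xi(t))$ and $\xi$ is a gradient curve.

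For (iii) I would read the computation of (i) in reverse, so that at every good time \eqref{eq:EVI} is precisely \eqref{eq:wKN'} along the geodesic from $\xi(t)$ to $z$. Given an arbitrary $x$ and target $z$ (with $d_g(x,z)<\pi\sqrt{N/K}$ when $K<0$), I would run the gradient curve $\xi$ with $\xi(0)=x$; since \eqref{eq:EVI} holds for a.e.\ $t$, \eqref{eq:wKN'} holds along the minimal geodesic from $\xi(t_n)$ to $z$ for some $t_n\downarrow0$. Passing to the limit, using the continuity of $f_N$, $d_g$, $\fc_{K/N}$, $\fs_{K/N}$ and of $(f_N\circ\gamma)'(0)$ in the base point, I would recover \eqref{eq:wKN'} along the geodesic from $x$ to $z$. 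As $x$ and $z$ are arbitrary, Lemma~\ref{lm:KN}(iii)$\Rightarrow$(i) yields the $(K,N)$-convexity of $f$.

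The main obstacle throughout is the lack of smoothness of $t\mapsto d_g(\xi(t),z)$: it is only semiconcave, hence differentiable merely almost everywhere, which is why \eqref{eq:EVI} is stated for almost all $t$ and why the first-variation computation in (i) is licit only at good times. In (iii) this is what forces the limiting argument $t_n\downarrow0$ together with a continuity statement for minimal geodesics; making the latter rigorous requires either uniqueness of the minimizer from $x$ to $z$ or an approximation argument in $z$ to avoid the cut locus.
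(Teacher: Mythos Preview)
Your proposal is correct and matches the paper's argument almost exactly: the paper also reduces \eqref{eq:EVI} via \eqref{eq:cs} to the intermediate form
\[
f_N(z) \ \ge\ \fc_{K/N}\big(h(t)\big)\,f_N\big(\xi(t)\big)\;-\;\frac{\fs_{K/N}(h(t))}{N\,h(t)}\,f_N\big(\xi(t)\big)\Big(\tfrac{h^2}{2}\Big)'(t),
\]
uses the first-variation formula for $(h^2/2)'$ in (i), and for (ii) probes with $z=\exp_{\xi(t)}(\ve v)$ and expands to first order. The only notable difference is in (iii): instead of passing to the limit $t_n\downarrow 0$ in the full inequality \eqref{eq:wKN'} (which, as you correctly flag, requires care with cut loci and convergence of minimal geodesics), the paper simply feeds the intermediate inequality above into the proof of Lemma~\ref{lm:KN} (iii)$\Rightarrow$(i), i.e.\ applies it with $z=\exp_{\xi(t)}(\pm\ve v)$ and adds; since only short geodesics from $\xi(t)$ are used, the cut-locus issue never arises.
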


\begin{proof}
The proof is similar to \cite[Lemma~2.4]{EKS}.

(i)
Take $t \in (0,T)$ at where $h(t):=d_g(\xi(t),z)$ is differentiable
(as well as $h(t)<\pi\sqrt{N/K}$ if $K<0$).
Given a minimal geodesic $\gamma:[0,1] \lra M$ from $\xi(t)$ to $z$,
it follows from the first variation formula that
$(h^2/2)'(t)=-\langle \dot{\xi}(t),\dot{\gamma}(0) \rangle$.
To be precise, the first variation formula gives
\[ (h^2/2)_+'(t) \le -\langle \dot{\xi}(t),\dot{\gamma}(0) \rangle, \qquad
 (h^2/2)'_-(t) \ge -\langle \dot{\xi}(t),\dot{\gamma}(0) \rangle \]
($(\cdot)'_+$ and $(\cdot)'_-$ denote the right and left differentiations)
since $\xi(t)$ may be a cut point of $z$,
and then the differentiability of $h$ yields equality.
Thus we have, by \eqref{eq:wKN'} and $\dot{\xi}(t) =-\nabla f(\xi(t))$,
\begin{equation}\label{eq:EVI''}
f_N(z) \ge \fc_{K/N}\big( h(t) \big) f_N\big( \xi(t) \big)
 -\frac{\fs_{K/N}(h(t))}{Nh(t)} f_N\big( \xi(t) \big) \left( \frac{h^2}{2} \right)'(t).
\end{equation}
This is equivalent to \eqref{eq:EVI} by noticing \eqref{eq:cs}.

(ii)
If \eqref{eq:EVI''} holds at $t \in (0,T)$, then we obtain,
given $v \in T_{\xi(t)}M$ and $\gamma(s):=\exp(sv)$,
\[ f_N\big( \gamma(\ve) \big) -\fc_{K/N}(\ve |v|) f_N\big( \xi(t) \big)
 \ge \frac{\fs_{K/N}(\ve |v|)}{N \ve |v|} f_N\big( \xi(t) \big)
 \langle \dot{\xi}(t),\ve v \rangle \]
for small $\ve>0$.
This shows $\langle \nabla f(\xi(t)),v \rangle \ge -\langle \dot{\xi}(t),v \rangle$ for all $v$.
Therefore $\dot{\xi}(t)=-\nabla f(\xi(t))$ for almost all, and hence all $t \in (0,T)$.

(iii)
The last assertion is shown by applying \eqref{eq:EVI''} (instead of \eqref{eq:wKN'})
in the proof of (iii) $\Rightarrow$ (i) in Lemma~\ref{lm:KN}.
$\qedd$
\end{proof}

\begin{example}\label{ex:KN}
The following functions on intervals are $(K,N)$-convex
on their domains (easily checked via \eqref{eq:KN'}):
\begin{enumerate}[(a)]
\item For $K>0$,
\[ f(x)=-N \log\bigg[ \cosh\bigg( x\sqrt{-\frac{K}{N}} \bigg) \bigg],
 \quad x \in \R. \]

\item For $K>0$,
\[ f(x)=-N \log\bigg[ \sinh\bigg( x\sqrt{-\frac{K}{N}} \bigg) \bigg],
 \quad x \in (0,\infty). \]

\item For $K=0$,
\[ f(x)=-N \log x, \quad x \in (0,\infty). \]

\item For $K<0$,
\[ f(x)=-N \log\bigg[ \cos\bigg( x\sqrt{\frac{K}{N}} \bigg) \bigg],
 \quad x \in \bigg( -\frac{\pi}{2}\sqrt{\frac{N}{K}},\frac{\pi}{2}\sqrt{\frac{N}{K}} \bigg). \]
\end{enumerate}
For each of these functions,
we have indeed equality in \eqref{eq:KN'} (and hence in \eqref{eq:KN}).
Therefore, for instance, $f(x)=-N\log x$ is not $K$-convex for any $K \in \R$ (near $x=0$).
\end{example}

\subsection{$(K,N)$-convex functions on metric spaces}\label{ssc:met}

Let $(X,d)$ be a metric space.
A curve $\gamma:[0,1] \lra X$ is called a \emph{minimal geodesic}
if it is minimizing and of constant speed, namely
$d(\gamma(s),\gamma(t))=|s-t|d(\gamma(0),\gamma(1))$ for all $s,t \in [0,1]$.
Given a function $f:X \lra (-\infty,\infty]$,
set $f_N(x):=\e^{-f(x)/N} \in (0,\infty]$ as in the previous subsection
and $\cD[f]:=\{ x \in X \,|\, f(x)<\infty \}$.
The following definition is natural according to Lemma~\ref{lm:KN}.

\begin{definition}[$(K,N)$-convexity]\label{df:KN}
We say that $f:X \lra (-\infty,\infty]$ is \emph{$(K,N)$-convex}
for $K \in \R$ and $N \in (-\infty,0)$ if any pair $x_0,x_1 \in \cD[f]$,
with $d:=d(x_0,x_1)<\pi\sqrt{N/K}$ when $K<0$, admits
a minimal geodesic $\gamma:[0,1] \lra X$ such that
$\gamma(0)=x_0$, $\gamma(1)=x_1$ and
\begin{equation}\label{eq:wKN+}
f_N\big( \gamma(t) \big) \le \sigma^{(1-t)}_{K/N}(d) f_N(x_0)
 +\sigma^{(t)}_{K/N}(d) f_N(x_1)
\end{equation}
for all $t \in [0,1]$.
If \eqref{eq:wKN+} holds along every minimal geodesic, then $f$ is said to be
\emph{strongly $(K,N)$-convex}.
\end{definition}

Notice that $\gamma(t) \in \cD[f]$ and hence $\cD[f]$ is connected,
and that \eqref{eq:wKN+} trivially holds if $x_0 \not\in \cD[f]$ or $x_1 \not\in \cD[f]$.
We remark that the inequality \eqref{eq:wKN+} is reversed for $N>0$.
Let us summarize basic properties of the $(K,N)$-convexity.
Compare the following two lemmas with \cite[Lemmas~2.9, 2.10]{EKS}.

\begin{lemma}\label{lm:scale}
Let $f:X \lra (-\infty,\infty]$ be $(K,N)$-convex.
\begin{enumerate}[{\rm (i)}]
\item
For any $c>0$, the function $cf$ is $(cK,cN)$-convex.
\item
For any $a \in \R$, the function $f+a$ is $(K,N)$-convex.
\end{enumerate}
\end{lemma}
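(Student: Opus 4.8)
The plan is to reduce both assertions directly to the defining inequality \eqref{eq:wKN+} of Definition~\ref{df:KN}, by tracking how each operation acts on the four ingredients of that definition: the exponential transform $f_N$, the curvature ratio $K/N$ (which determines the coefficients $\sigma^{(t)}_{K/N}$), the admissibility threshold $d<\pi\sqrt{N/K}$, and the finiteness domain $\cD[f]$. In each case I expect the \emph{same} minimal geodesic $\gamma$ supplied by the $(K,N)$-convexity of $f$ to work, so that no new geodesic has to be produced.

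For (i) the first step is to note that the relevant parameter is scale invariant: $(cK)/(cN)=K/N$, hence $\sigma^{(t)}_{(cK)/(cN)}=\sigma^{(t)}_{K/N}$ for every $t\in[0,1]$. Next I would compute $(cf)_{cN}(x)=\e^{-cf(x)/(cN)}=\e^{-f(x)/N}=f_N(x)$, so the transform itself is literally unchanged; likewise $\cD[cf]=\cD[f]$ since $c>0$. Because $c>0$, we have $cK<0$ precisely when $K<0$, and $\sqrt{(cN)/(cK)}=\sqrt{N/K}$, so the admissibility condition $d<\pi\sqrt{N/K}$ is identical. Substituting these identities, the inequality \eqref{eq:wKN+} written for $cf$ with parameters $(cK,cN)$ becomes word for word the inequality \eqref{eq:wKN+} for $f$ with $(K,N)$, which holds by hypothesis along some $\gamma$; that same $\gamma$ witnesses the $(cK,cN)$-convexity of $cf$.

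For (ii) the key computation is $(f+a)_N(x)=\e^{-(f(x)+a)/N}=\e^{-a/N}f_N(x)$, so adding a constant multiplies the transform by the \emph{positive} constant $\e^{-a/N}$. Here $K$ and $N$ are untouched, so the coefficients $\sigma^{(t)}_{K/N}$ and the admissibility threshold are the same, and $\cD[f+a]=\cD[f]$. Since \eqref{eq:wKN+} is homogeneous of degree one in $f_N$ (both sides are linear in the transform), multiplying it through by $\e^{-a/N}>0$ preserves the inequality and yields \eqref{eq:wKN+} for $f+a$ along the same geodesic $\gamma$.

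I do not expect a genuine obstacle: the entire content is the invariance-and-homogeneity bookkeeping above. The only point deserving a moment's care is the scale invariance $ (cK)/(cN)=K/N$ in part (i) --- this is exactly what singles out the joint scaling $(K,N)\mapsto(cK,cN)$ as the natural one and makes the $\sigma$-coefficients survive the change of parameters unchanged.
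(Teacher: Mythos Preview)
Your proposal is correct and follows exactly the same approach as the paper: the paper's proof is a single sentence invoking the identities $(cf)_{cN}=f_N$ and $(f+a)_N=\e^{-a/N}f_N$, and your argument simply spells out the routine bookkeeping behind those identities.
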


\begin{proof}
These are immediate from the definition and $(cf)_{cN}=f_N$
as well as $(f+a)_N=\e^{-a/N}f_N$.
$\qedd$
\end{proof}

\begin{lemma}[Sum]\label{lm:sum}
Let $K_1,K_2 \in \R$, $N_2>0$ and $N_1<-N_2$.
Assume that $f_1:X \lra (-\infty,\infty]$ is $(K_1,N_1)$-convex
and $f_2:X \lra (-\infty,\infty]$ is strongly $(K_2,N_2)$-convex.
Then the sum $f:=f_1+f_2$ is $(K_1+K_2,N_1+N_2)$-convex.
\end{lemma}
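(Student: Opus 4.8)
The plan is to reduce everything to two elementary inequalities by factorising $f_N$. Put $N:=N_1+N_2$, $K:=K_1+K_2$ and
\[ a:=\frac{N_1}{N},\qquad b:=\frac{N_2}{N},\qquad \kappa_1:=\frac{K_1}{N_1},\quad \kappa_2:=\frac{K_2}{N_2},\quad \kappa:=\frac{K}{N}. \]
The hypotheses $N_2>0$ and $N_1<-N_2$ force $N<0$, and a direct check gives $a>1$, $b<0$, $a+b=1$, together with the affine relation $\kappa=a\kappa_1+b\kappa_2$. Since $\e^{-f_i/N}=\big((f_i)_{N_i}\big)^{N_i/N}$, I have the pointwise factorisation $f_N=(f_1)_{N_1}^{\,a}\,(f_2)_{N_2}^{\,b}$, which is the identity that lets the two convexity hypotheses interact.

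First I would fix $x_0,x_1\in\cD[f]=\cD[f_1]\cap\cD[f_2]$ with $d:=d(x_0,x_1)$ in the admissible range (discussed below). As $f_1$ is $(K_1,N_1)$-convex, it supplies a minimal geodesic $\gamma$ from $x_0$ to $x_1$ along which \eqref{eq:wKN+} holds for $f_1$; and since $f_2$ is \emph{strongly} $(K_2,N_2)$-convex, the corresponding (reversed, because $N_2>0$) inequality for $f_2$ holds along this \emph{same} $\gamma$ --- this is exactly why $f_2$ must be strongly convex while $f_1$ need not be. This $\gamma$ will witness the $(K,N)$-convexity of $f$. Writing $A_i:=(f_1)_{N_1}(x_i)$, $B_i:=(f_2)_{N_2}(x_i)$ and $p_0:=\sigma^{(1-t)}_{\kappa_1}(d)$, $p_1:=\sigma^{(t)}_{\kappa_1}(d)$, $q_0:=\sigma^{(1-t)}_{\kappa_2}(d)$, $q_1:=\sigma^{(t)}_{\kappa_2}(d)$, the two inequalities read $(f_1)_{N_1}(\gamma(t))\le P:=p_0A_0+p_1A_1$ and $(f_2)_{N_2}(\gamma(t))\ge Q:=q_0B_0+q_1B_1$. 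Because $a>0$ the $a$-th power keeps the first direction, while because $b<0$ the $b$-th power \emph{reverses} the second; both thus yield upper bounds and
\[ f_N(\gamma(t))=\big((f_1)_{N_1}(\gamma(t))\big)^{a}\big((f_2)_{N_2}(\gamma(t))\big)^{b}\le P^{a}Q^{b}. \]

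Next I would absorb $P^aQ^b$ into the right-hand side of \eqref{eq:wKN+}. The function $\phi(x,y):=x^ay^b$ is positively $1$-homogeneous, and a one-line Hessian computation (using $a+b=1$, so that $\det\Hess\phi=0$ while $\phi_{xx}=a(a-1)x^{a-2}y^b>0$) shows that $\phi$ is \emph{convex} on $(0,\infty)^2$ precisely because $a>1$ and $b<0$. This is the decisive sign change relative to the $N>0$ case, where $a,b\in(0,1)$ make $\phi$ concave. A convex, positively $1$-homogeneous function is subadditive, so
\[ P^aQ^b=\phi\big((p_0A_0,q_0B_0)+(p_1A_1,q_1B_1)\big)\le p_0^aq_0^b\,A_0^aB_0^b+p_1^aq_1^b\,A_1^aB_1^b=p_0^aq_0^b\,f_N(x_0)+p_1^aq_1^b\,f_N(x_1). \]
It then remains only to prove the coefficient inequality $\big(\sigma^{(s)}_{\kappa_1}(d)\big)^a\big(\sigma^{(s)}_{\kappa_2}(d)\big)^b\le\sigma^{(s)}_{\kappa}(d)$ for $s\in\{t,1-t\}$.

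I expect this last coefficient inequality, together with the attendant sign and range bookkeeping, to be the crux. It rests on the convexity of $\kappa\mapsto\log\sigma^{(s)}_\kappa(d)$ --- the same analytic fact that drives the $N>0$ sum lemma in \cite{EKS} --- combined with $\kappa=a\kappa_1+b\kappa_2$. The point is to rewrite this as the genuine convex combination $\kappa_1=\tfrac1a\kappa+\tfrac{a-1}{a}\kappa_2$ (whose weights $\tfrac1a,\tfrac{a-1}{a}$ lie in $(0,1)$ exactly because $a>1$); applying convexity, multiplying by $a>0$ and using $b=-(a-1)$ yields $a\log\sigma^{(s)}_{\kappa_1}(d)+b\log\sigma^{(s)}_{\kappa_2}(d)\le\log\sigma^{(s)}_{\kappa}(d)$, which is the desired inequality. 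The ``care about the ranges of $N_1$ and $N_2$'' flagged before the lemma enters here in two ways: the signs $a>1$, $b<0$ (forced by $N_1<-N_2<0$) are what turn concavity into convexity throughout the argument; and one must check that all three coefficient systems are simultaneously defined, i.e.\ that $d$ lies below the binding threshold $\pi/\sqrt{\max\{\kappa,\kappa_2\}}$ when the latter is positive. The identities $\kappa-\kappa_2=a(\kappa_1-\kappa_2)$ and $\kappa-\kappa_1=-b(\kappa_1-\kappa_2)$ place $\kappa_1$ between $\kappa$ and $\kappa_2$, reducing this last check to comparing $\kappa$ with $\kappa_2$, which I would finish by a short case distinction.
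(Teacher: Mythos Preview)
Your argument is correct and reaches the same conclusion as the paper, but the route is organised differently. The paper does everything in a single stroke via the joint convexity of the three-variable function
\[
G_t(\theta,\eta,\kappa)=\log\bigl[\sigma^{(1-t)}_{\kappa}(1)\e^{\theta}+\sigma^{(t)}_{\kappa}(1)\e^{\eta}\bigr]
\]
from \cite[Lemma~2.11]{EKS}: after writing $\log f_N(\gamma(t))$ as the $(N_1/N,N_2/N)$-weighted sum of the two hypotheses, it observes that the $f_1$-point $\bigl(-\tfrac{f_1(x_0)}{N_1},-\tfrac{f_1(x_1)}{N_1},\tfrac{K_1}{N_1}d^2\bigr)$ is a genuine convex combination (weights $N/N_1$ and $-N_2/N_1$, both in $(0,1)$ because $N_1<-N_2$) of the $f$-point and the $f_2$-point, and applies convexity of $G_t$ once. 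You instead decompose this into two elementary pieces: first the subadditivity of $\phi(x,y)=x^ay^b$ (a reverse power-mean inequality, handling the values $f_N(x_i)$), then the log-convexity of $\kappa\mapsto\sigma^{(s)}_{\kappa}(d)$ (handling the coefficients). The two arguments are equivalent in content---your two ingredients are exactly the restrictions of $G_t$-convexity to the $(\theta,\eta)$-plane and to the $\kappa$-direction, and your rewriting $\kappa_1=\tfrac{1}{a}\kappa+\tfrac{a-1}{a}\kappa_2$ is the same reparametrisation the paper uses---but your version makes the power-mean mechanism explicit and keeps each step self-contained, while the paper's is more compact and reuses \cite{EKS} wholesale. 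Your treatment of the admissible ranges (placing $\kappa_1$ between $\kappa$ and $\kappa_2$ so that only $\max\{\kappa,\kappa_2\}$ binds, with the $\kappa_2>0$ case handled by the diameter bound on $\cD[f_2]$) is precisely what the paper's case split (a)--(c) carries out in detail according to the signs of $K_1$ and $K_2$.
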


\begin{proof}
Put $K=K_1+K_2$ and $N=N_1+N_2$.
Let us first check that the range where the $(K,N)$-convexity is effective does not
exceed those of the $(K_i,N_i)$-convexities.
There is nothing to prove when $K_1 \ge 0$ and $K_2 \le 0$.

(a) If $K_1<0$ and $K_2 \le 0$, then $N/K \le N/K_1 <N_1/K_1$.

(b) If $K_1 \ge 0$ and $K_2>0$, then the diameter of $\cD[f_2]$ is not greater than $\pi\sqrt{N_2/K_2}$
(see \cite[Remark~2.3]{EKS}).
The strong $(K_2,N_2)$-convexity further shows that,
if there is a maximal pair $x_0,x_1 \in X$ with $d(x_0,x_1)=\pi\sqrt{N_2/K_2}$,
then $x_0 \not\in \cD[f_2]$ or $x_1 \not\in \cD[f_2]$.
Therefore it is enough to consider points $x_0,x_1$ with $d(x_0,x_1)<\pi\sqrt{N_2/K_2}$,
and then the $(K_2,N_2)$-convexity is available between them.

(c) There remains the case where $K_1<0$ and $K_2>0$.
If $N_1/K_1 \ge N_2/K_2$, then the argument in (b) applies.
Thus assume $N_1/K_1<N_2/K_2$.
Then we have
\[ K=K_1+K_2 <\left( 1+\frac{N_2}{N_1} \right) K_1 =\frac{N}{N_1}K_1 <0 \]
and hence $N/K<N_1/K_1$.

Now, by the hypothesis, any pair $x_0,x_1 \in \cD[f]=\cD[f_1] \cap \cD[f_2]$
admits a minimal geodesic $\gamma:[0,1] \lra X$ along which
\begin{align*}
(f_1)_{N_1}\big( \gamma(t) \big) &\le \sigma^{(1-t)}_{K_1/N_1}(d) (f_1)_{N_1}(x_0)
 +\sigma^{(t)}_{K_1/N_1}(d) (f_1)_{N_1}(x_1), \\
(f_2)_{N_2}\big( \gamma(t) \big) &\ge \sigma^{(1-t)}_{K_2/N_2}(d) (f_2)_{N_2}(x_0)
 +\sigma^{(t)}_{K_2/N_2}(d) (f_2)_{N_2}(x_1),
\end{align*}
where $d:=d(x_0,x_1)$.
Thus we have
\begin{align*}
&\log\left[ f_N\big( \gamma(t) \big) \right]
 = -\frac{N_1}{N} \frac{ f_1(\gamma(t))}{N_1}
 -\frac{N_2}{N} \frac{f_2(\gamma(t))}{N_2} \\
&\le \frac{N_1}{N} G_t \left( -\frac{f_1(x_0)}{N_1}, -\frac{f_1(x_1)}{N_1}, \frac{K_1}{N_1}d^2 \right)
 +\frac{N_2}{N} G_t \left( -\frac{f_2(x_0)}{N_2}, -\frac{f_2(x_1)}{N_2}, \frac{K_2}{N_2}d^2 \right),
\end{align*}
where
\[ G_t(\theta,\eta,\kappa):=\log\left[
 \sigma^{(1-t)}_{\kappa}(1)\e^{\theta} +\sigma^{(t)}_{\kappa}(1)\e^{\eta} \right],
 \quad \theta,\eta \in \R,\ \kappa \in (-\infty,\pi^2), \]
and we used $\sigma^{(t)}_{\kappa d^2}(1)=\sigma^{(t)}_{\kappa}(d)$.
The function $G_t$ is convex (\cite[Lemma~2.11]{EKS}) for each fixed $t$,
hence we obtain
\begin{align*}
&G_t \left( -\frac{f_1(x_0)}{N_1}, -\frac{f_1(x_1)}{N_1}, \frac{K_1}{N_1}d^2 \right) \\
&\le -\frac{N_2}{N_1}  G_t \left( -\frac{f_2(x_0)}{N_2}, -\frac{f_2(x_1)}{N_2}, \frac{K_2}{N_2}d^2 \right)
 +\frac{N}{N_1} G_t \left( -\frac{f(x_0)}{N}, -\frac{f(x_1)}{N}, \frac{K}{N}d^2 \right).
\end{align*}
Combining these yields
\[ \log\left[ f_N\big( \gamma(t) \big) \right] \le G_t \left( -\frac{f(x_0)}{N}, -\frac{f(x_1)}{N}, \frac{K}{N}d^2 \right), \]
which completes the proof.
$\qedd$
\end{proof}

\begin{remark}\label{rm:sum}
The summation rule in Lemma~\ref{lm:sum} holds true also for $N_1,N_2>0$ (\cite[Lemma~2.10]{EKS}),
however, fails in the other ranges.
For example, $f_1 \equiv 0$ is $(0,-1)$-convex and $f_2(x)=-2\log x$ is $(0,2)$-convex on $(0,\infty)$,
but the sum $f_1+f_2=f_2$ is not $(0,1)$-convex.
Similarly, $f_1 \equiv 0$ and $f_2(x)=\log x$ are $(0,-1)$-convex,
but their sum is not $(0,-2)$-convex.
\end{remark}

The $(K,N)$-convexity is weaker than the \emph{$K$-convexity}:
\[ f\big( \gamma(t) \big) \le (1-t)f(x_0) +tf(x_1) -\frac{K}{2}(1-t)td^2. \]
More precisely, we have the following with the help of Lemma~\ref{lm:sum}
(similarly to \cite[Lemma~2.12]{EKS}).

\begin{lemma}[Monotonicity]\label{lm:mono}
If $f:X \lra (-\infty,\infty]$ is $(K,N)$-convex,
then it is also $(K',N')$-convex for all $K' \le K$ and $N' \in [N,0)$.

Moreover, if $f$ is $K$-convex, then it is $(K,N)$-convex for all $N<0$.
\end{lemma}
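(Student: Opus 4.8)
The plan is to deduce both statements from the summation rule of Lemma~\ref{lm:sum} by exhibiting suitable auxiliary functions to add to $f$. For the first assertion, suppose $f$ is $(K,N)$-convex and fix $K' \le K$ and $N' \in [N,0)$. I would like to write $f = f + \mathbf{0}$ and absorb the constant function into the parameter shift, but a single constant cannot change $N$; instead I would add a genuinely convex piece. Concretely, since $N' \ge N$ we have $N_2 := N'-N \in [0,\infty)$, and I would take $f_2 \equiv 0$, which is trivially strongly $(K_2,N_2)$-convex for $K_2 = 0$ and any $N_2 > 0$. Applying Lemma~\ref{lm:sum} with $f_1 = f$, $K_1 = K$, $N_1 = N$ (noting the hypothesis $N_1 < -N_2$ reads $N < -(N'-N) = N - 2\ldots$, which needs checking—see below), I would obtain that $f = f_1 + f_2$ is $(K, N')$-convex. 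To then lower $K$ to $K'$, I would invoke the $K$-part of monotonicity separately: decreasing $K$ only weakens the convexity inequality \eqref{eq:wKN+}, since $\sigma^{(t)}_{\kappa}$ is monotone in $\kappa$ on the relevant range, so $(K,N')$-convexity implies $(K',N')$-convexity directly from the definition.

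For the second assertion, suppose $f$ is $K$-convex in the ordinary sense, and fix an arbitrary $N < 0$. The idea is that $K$-convexity is exactly $(K,\infty)$-convexity, and I want to trade infinite $N$ for any finite negative value. I would realize this by adding to $f$ a strongly convex function of opposite sign in the dimensional parameter. Observe that $f$ being $K$-convex makes it $(K, N_1)$-convex for every $N_1 < 0$ in the degenerate sense where the $\langle \nabla f, v\rangle^2/N_1$ term only helps; more usefully, I would write $f = f + \mathbf{0}$ and apply Lemma~\ref{lm:sum} with $f_1 = f$ treated as $(K, N_1)$-convex for a very negative $N_1$ and $f_2 \equiv 0$ strongly $(0, N_2)$-convex, choosing $N_1, N_2$ so that $N_1 + N_2 = N$ with $N_1 < -N_2 < 0$, i.e.\ $N_2 > 0$ and $N_1 < -N_2$. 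Since any $N<0$ can be decomposed this way (take $N_2$ large and $N_1 = N - N_2$, which is $< -N_2$ precisely when $N < 0$), the conclusion follows.

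The main obstacle I expect is the bookkeeping on the parameter ranges in Lemma~\ref{lm:sum}, whose hypothesis $N_1 < -N_2$ with $N_2 > 0$ is asymmetric and must be respected exactly. In particular, to derive $(K,N')$-convexity with $N' \in [N,0)$ from $N$, the naive decomposition $N = N_1 + N_2$ with $N_1 = N$ forces $N_2 = 0$, which violates $N_2 > 0$; so I must instead decompose $N' = N_1 + N_2$ and treat the given $f$ as $(K,N_1)$-convex for some $N_1 \le N$, which requires already knowing a monotonicity step in $N$. This threatens circularity, so the cleanest route is probably to prove the $N$-monotonicity directly from Definition~\ref{df:KN} by analyzing how $\sigma^{(t)}_{K/N}(d)$ varies with $N$ (using that $f_N = \e^{-f/N}$ also depends on $N$), rather than routing everything through Lemma~\ref{lm:sum}; the summation lemma then cleanly handles only the passage from $K$-convexity to $(K,N)$-convexity. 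Establishing the correct monotonicity of $\sigma^{(t)}_{K/N}(d) f_N$ in $N$ is the one genuine computation, and I would reduce it to the convexity of the function $G_t$ from the proof of Lemma~\ref{lm:sum}, which packages exactly this dependence.
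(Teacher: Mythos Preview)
Your arithmetic in checking the hypothesis of Lemma~\ref{lm:sum} is off, and this sends you down an unnecessary detour. With $f_1=f$, $f_2\equiv 0$, $(K_1,N_1)=(K,N)$ and $(K_2,N_2)=(0,N'-N)$, the condition $N_1<-N_2$ reads $N<-(N'-N)=N-N'$, i.e.\ $N'<0$, which holds by assumption. So Lemma~\ref{lm:sum} applies directly (for $N'>N$; the case $N'=N$ is vacuous) and yields the $N$-monotonicity in one stroke---there is no circularity here, and this is exactly the paper's route. The $K$-monotonicity is handled separately via the monotonicity of $\sigma^{(t)}_{\kappa}$ in $\kappa$, as you correctly say.

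The genuine gap is in your treatment of the ``Moreover'' assertion. Applying Lemma~\ref{lm:sum} with $f_1=f$ ``treated as $(K,N_1)$-convex for a very negative $N_1$'' assumes precisely what you are trying to prove; no decomposition with $f_2\equiv 0$ escapes this. The paper instead extracts from the \emph{proof} of Lemma~\ref{lm:sum} (specialized to the choice~\eqref{eq:mono}, together with $G_t(0,0,0)=0$) that
\[
-N\,G_t\!\left(-\frac{f(x_0)}{N},\,-\frac{f(x_1)}{N},\,\frac{K}{N}d^2\right)
\]
is non-decreasing in $N\in(-\infty,0)$, and then computes its limit as $N\to-\infty$ explicitly, obtaining $(1-t)f(x_0)+tf(x_1)-\tfrac{K}{2}(1-t)td^2$ via the expansion $\sigma^{(t)}_{\kappa}(\theta)=t+\tfrac{\kappa}{6}t(1-t^2)\theta^2+O(\theta^4)$. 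That limit computation is the missing ingredient: it connects $K$-convexity (the $N\to-\infty$ endpoint) to $(K,N)$-convexity for finite $N<0$ without any circular appeal. You had the right tool (the convexity of $G_t$) in hand but assigned it to the wrong half of the argument.
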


\begin{proof}
The monotonicity in $K$ follows from the fact that $\sigma^{(t)}_{\kappa}(\theta)$
is non-decreasing in $\kappa$ once $t$ and $\theta$ are fixed (see \cite[Remark~2.2]{BS}).
Note also that $\pi\sqrt{N/K'} \le \pi\sqrt{N/K}$ if $K<0$.
The monotonicity in $N$ is a consequence of Lemma~\ref{lm:sum} by letting
\begin{equation}\label{eq:mono}
f_1=f,\quad f_2 \equiv 0,\quad (K_1,N_1)=(K,N),\quad (K_2,N_2)=(0,N'-N).
\end{equation}
The proof of Lemma~\ref{lm:sum} also shows that
\[ -N G_t \left( -\frac{f(x_0)}{N}, -\frac{f(x_1)}{N}, \frac{K}{N}d^2 \right) \]
is non-decreasing in $N \in (-\infty,0)$ once the other quantities are fixed
(use \eqref{eq:mono} again and observe $G_t(0,0,0)=0$).
Then the last assertion follows from
\begin{align*}
&-\lim_{N \to -\infty} N G_t \left( -\frac{f(x_0)}{N}, -\frac{f(x_1)}{N}, \frac{K}{N}d^2 \right) \\
&= \lim_{\ve \downarrow 0} \frac{1}{\ve} \log \left[
 \sigma^{(1-t)}_{-K}(\sqrt{\ve}d) \e^{\ve f(x_0)}
 +\sigma^{(t)}_{-K}(\sqrt{\ve}d) \e^{\ve f(x_1)} \right] \\
&= (1-t)f(x_0) +\frac{K}{6}(1-t)(t^2-2t)d^2 +tf(x_1) +\frac{K}{6}t(t^2-1)d^2 \\
&= (1-t)f(x_0) +tf(x_1) -\frac{K}{2}(1-t)td^2,
\end{align*}
where we used $\sigma^{(t)}_{\kappa}(\theta)=t+(\kappa/6)t(1-t^2)\theta^2 +O(\theta^4)$
(see \cite[Proposition~5.5]{BS}).
$\qedd$
\end{proof}

\section{Gradient flows of $(K,N)$-convex functions}\label{sc:gf}

We continue the study of $(K,N)$-convex functions on a metric space $(X,d)$.
Precisely, we shall employ the evolution variational inequality \eqref{eq:EVI}
as a definition of gradient curves implicitly including the $(K,N)$-convexity
of a potential function (recall Lemma~\ref{lm:EVI}(ii), (iii)),
and derive several regularizing estimates from it.
We also discuss an expansion bound of gradient flows in the Riemannian setting.

\subsection{Gradient flows and evolution variational inequality}\label{ssc:EVI}

Fix $f:X \lra (-\infty,\infty]$ throughout the subsection
and recall $\cD[f]=f^{-1}((-\infty,\infty))$.
In order to give the metric definition of solutions to the gradient flow equation
``$\dot{\xi}=-\nabla f(\xi)$'', we need two notions.
We refer to \cite{AGSbook} for the deep theory of gradient flows in metric spaces.

At $x \in \cD[f]$, define the \emph{local $($descending$)$ slope} of $f$ by
\[ |\grad f|(x) :=\max \left\{ \limsup_{y \to x}\frac{f(x)-f(y)}{d(x,y)},0 \right\}. \]
A curve $\xi:I \lra X$ on an interval $I \subset \R$ is said to be \emph{absolutely continuous}
if there is $h \in L^1_{\loc}(I)$ such that
\begin{equation}\label{eq:abcon}
d\big( \xi(s),\xi(t) \big) \le \int_s^t h(r) \,dr \quad
 \text{for all}\ s,t \in I\ \text{with}\ s<t.
\end{equation}
Then the \emph{metric speed}
\[ |\dot{\xi}|(t) :=\lim_{\delta \to 0} \frac{d(\xi(t),\xi(t+\delta))}{|\delta|} \]
exists at almost every $t \in I$ and gives the minimal function $h$
adapted to \eqref{eq:abcon} (see \cite[Theorem~1.1.2]{AGSbook}).
Absolutely continuous curves are clearly continuous.

\begin{definition}[Gradient curves]\label{df:gf}
Let $\xi:[0,T) \lra X$ be a continuous curve
which is absolutely continuous on $(0,T)$ and $f(\xi(t))<\infty$ for $t \in (0,T)$.
We say that $\xi$ is a \emph{gradient curve} of $f$ if the following
\emph{energy dissipation identity} holds:
\begin{equation}\label{eq:EDI}
f\big( \xi(t) \big) =f\big( \xi(s) \big) -\frac{1}{2}\int_s^t
 \left\{ |\dot{\xi}|(r)^2 +|\grad f|\big( \xi(r) \big)^2 \right\} dr
\end{equation}
for all $0<s<t<T$.
\end{definition}

Motivated by Lemma~\ref{lm:EVI} on Riemannian manifolds,
we also introduce the following elaborate notion of gradient curves.

\begin{definition}[$\EVI_{K,N}$-gradient curves]\label{df:EVI}
Let $\xi:[0,T) \lra X$ be a continuous curve
which is absolutely continuous on $(0,T)$ and $f(\xi(t))<\infty$ for $t \in (0,T)$.
Then, for $K \in \R$ and $N \in (-\infty,0)$,
we say that $\xi$ is an \emph{$\EVI_{K,N}$-gradient curve} of $f$
if the \emph{evolution variational inequality}
\begin{equation}\label{eq:EVI+}
\frac{d}{dt}\bigg[ \fs_{K/N}\left( \frac{d(\xi(t),z)}{2} \right)^2 \bigg]
 +K \fs_{K/N}\left( \frac{d(\xi(t),z)}{2} \right)^2
 \le \frac{N}{2}\left\{ 1-\frac{f_N(z)}{f_N(\xi(t))} \right\}
\end{equation}
holds for all $z \in \cD[f]$ and almost all $t \in (0,T)$
with $d(\xi(t),z)<\pi\sqrt{N/K}$ if $K<0$.
\end{definition}

This is a generalization of the \emph{$\EVI_K$-gradient curve} defined by
\begin{equation}\label{eq:EVIK}
\frac{d}{dt}\bigg[ \frac{d(\xi(t),z)^2}{2} \bigg] +K \frac{d(\xi(t),z)^2}{2}
 \le f(z)-f\big(\xi(t) \big)
\end{equation}
(see \cite{AGSbook}, \cite{DS}),
which is certainly recovered by letting $N \to -\infty$ in \eqref{eq:EVI+}.
Roughly speaking, the existence of $\EVI_{K,N}$-gradient curves
starting from arbitrary starting points implies that the potential function is $(K,N)$-convex
(see Lemma~\ref{lm:EVI}(iii))
and the underlying space is ``Riemannian''
(see \cite{AGSrcd}, and
recall that the inner product was used to obtain \eqref{eq:EVI} from the $(K,N)$-convexity).
The latter implication is related to the contraction property discussed in the next subsection.
The following lemma verifies the consistency in $K$ and $N$
in a similar manner to Lemma~\ref{lm:mono}.

\begin{lemma}[Monotonicity]\label{lm:mono+}
If $\xi:[0,T) \lra X$ is an $\EVI_{K,N}$-gradient curve of $f$,
then it is also an $\EVI_{K',N'}$-gradient curve of $f$ for all $K' \le K$
and $N' \in [N,0)$.

Moreover, if $\xi$ is an $\EVI_K$-gradient curve of $f$,
then it is an $\EVI_{K,N}$-gradient curve of $f$ for all $N<0$.
\end{lemma}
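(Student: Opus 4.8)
The plan is to reduce the evolution variational inequality \eqref{eq:EVI+} to a pointwise (in $t$) upper bound on the growth of $h(t):=d(\xi(t),z)$, and then to establish the required monotonicity of that bound in $K$ and in $N$ separately, exactly paralleling the two monotonicities in the proof of Lemma~\ref{lm:mono}. Fix $z \in \cD[f]$. Since $\xi$ is absolutely continuous, $h$ is absolutely continuous, hence differentiable at almost every $t \in (0,T)$; at such a $t$ with $h(t)>0$ the chain rule together with the double-angle relations \eqref{eq:cs} (precisely as in the passage from \eqref{eq:wKN'} to \eqref{eq:EVI''}) rewrites \eqref{eq:EVI+} as the equivalent inequality
\[
h'(t) \le \frac{N}{\fs_{K/N}(h(t))}\left( \fc_{K/N}(h(t)) -\frac{f_N(z)}{f_N(\xi(t))} \right) =: B_{K,N}(t),
\]
after dividing by $\fs_{K/N}(h(t))>0$. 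As $\xi$ is fixed, $B_{K,N}(t)$ depends on $(K,N)$ only through the explicit functions $\fs_{K/N},\fc_{K/N}$ and the ratio $f_N(z)/f_N(\xi(t))=\e^{(f(\xi(t))-f(z))/N}$, so the lemma follows once we show $B_{K,N}(t)$ is non-increasing in $K$ and non-decreasing in $N$: then $B_{K,N}(t) \le B_{K',N'}(t)$ for $K'\le K$ and $N'\ge N$, and $h'(t)\le B_{K,N}(t)\le B_{K',N'}(t)$ yields \eqref{eq:EVI+} for $(K',N')$. A preliminary remark is that the admissible range of $z$ only shrinks, since $\pi\sqrt{N'/K'} \le \pi\sqrt{N/K}$ when $K<0$ (recall $K'\le K<0$ and $N'\ge N$), so the hypothesis for $(K,N)$ is always available at the $z$ for which the conclusion is needed.

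For the monotonicity in $K$ I would fix $N$ and set $\kappa:=K/N$, so that $K'\le K$ corresponds to $\kappa'\ge\kappa$ (as $N<0$). Writing $C:=f_N(z)/f_N(\xi(t))>0$, which is independent of $K$, and dividing by $N<0$, the desired inequality $B_{K,N}(t)\le B_{K',N}(t)$ reduces to showing that
\[
\frac{\fc_{\kappa}(h)-C}{\fs_{\kappa}(h)}
\]
is non-increasing in $\kappa$ for fixed $h>0$. This holds because $\fs_{\kappa}(h)$ is non-increasing in $\kappa$ (so $-C/\fs_{\kappa}(h)$ is non-increasing) while the logarithmic derivative $\fc_{\kappa}(h)/\fs_{\kappa}(h)=\fs_{\kappa}'(h)/\fs_{\kappa}(h)$ is non-increasing in $\kappa$ by a Sturm comparison for $u''+\kappa u=0$ (cf.\ \cite[Remark~2.2]{BS}). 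This settles the $\EVI_{K,N}\Rightarrow\EVI_{K',N}$ part.

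The monotonicity in $N$ is the crux, because now both $\kappa=K/N$ and $C=\e^{(f(\xi(t))-f(z))/N}$ vary with $N$; this is the analogue of the step in Lemma~\ref{lm:mono} that used the summation rule, for which there is no counterpart for gradient curves, so I would argue directly that $N\mapsto B_{K,N}(t)$ is non-decreasing on $(-\infty,0)$. The model case $K=0$ is already indicative: with $a:=f(\xi(t))-f(z)$ and $w:=a/N$ one has $B_{0,N}(t)=N(1-\e^{a/N})/h$, whose derivative in $N$ has the sign of $1-(1-w)\e^{w}$, and the elementary inequality $1-(1-w)\e^{w}\ge 0$ (equality only at $w=0$) gives the monotonicity. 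The general case $K\ne 0$ follows the same scheme but requires the corresponding one-variable estimates for $\fs_{K/N}$ and $\fc_{K/N}$, and this is where I expect the main technical work to lie.

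Finally, the ``moreover'' statement comes from letting $N\to-\infty$: expanding $\fc_{K/N}(h)=1-Kh^2/(2N)+O(N^{-2})$ and $\e^{a/N}=1+a/N+O(N^{-2})$ gives $B_{K,N}(t)\to (f(z)-f(\xi(t))-Kh^2/2)/h$, which is exactly the pointwise form of the $\EVI_K$ inequality \eqref{eq:EVIK}. Combined with the monotonicity in $N$ just established, this limit is the infimum of $B_{K,N}(t)$ over $N<0$, so an $\EVI_K$-gradient curve satisfies $h'(t)\le B_{K,N}(t)$ for every $N<0$, i.e.\ \eqref{eq:EVI+} holds for all $N<0$.
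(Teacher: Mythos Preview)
Your reformulation of \eqref{eq:EVI+} as $h'(t)\le B_{K,N}(t)$ is precisely the paper's \eqref{eq:EVIa} divided by $h$, and your argument for the $K$-monotonicity and for the $N\to-\infty$ limit matches the paper's. The genuine gap is the $N$-monotonicity for $K\neq 0$, which you explicitly leave open: the sentence ``this is where I expect the main technical work to lie'' is an admission that the crux of the lemma is unproved. Your $K=0$ computation, while correct, gives no hint of how to handle the simultaneous $N$-dependence of $\fs_{K/N}$ and $\fc_{K/N}$; attempting to differentiate $B_{K,N}(t)$ directly in $N$ produces an unmanageable expression.

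The missing idea is to use the half-angle identity \eqref{eq:cs}, specifically $\fc_{K/N}(h)=1-2(K/N)\,\fs_{K/N}(h/2)^2$, to split your bound as
\[
B_{K,N}(t)=\frac{N}{\fs_{K/N}(h)}\bigl(1-\e^{a/N}\bigr)
\;-\;\frac{K}{2}\,\frac{\fs_{K/N}(h/2)}{\fc_{K/N}(h/2)},
\]
which is the form \eqref{eq:EVIb}. The point of this decomposition is that each summand is \emph{separately} non-decreasing in $N\in(-\infty,0)$ for fixed $K,a,h$; the paper simply records this (invoking the same estimates as \cite[Lemma~2.15]{EKS}) and the lemma follows. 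Note that the second summand isolates all the explicit $K$-dependence, while the first summand has $K$ entering only through $\fs_{K/N}$; this is what makes each piece tractable, whereas your undecomposed $B_{K,N}(t)$ mixes the $K$- and $a$-contributions in a way that obscures the monotonicity.
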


\begin{proof}
The proof is indebted to the estimates same as \cite[Lemma~2.15]{EKS}.
With the help of \eqref{eq:cs}, we can rewrite \eqref{eq:EVI+} in the following two ways:
\begin{align}
\frac{d}{dt}\bigg[ \frac{d(\xi(t),z)^2}{2} \bigg]
&\le \frac{Nd}{\fs_{K/N}(d)} \left\{ 1-\frac{f_N(z)}{f_N(\xi(t))} \right\}
 -2Kd \frac{\fs_{K/N}(d/2)^2}{\fs_{K/N}(d)},
 \label{eq:EVIb} \\
\frac{d}{dt}\bigg[ \frac{d(\xi(t),z)^2}{2} \bigg]
&\le \frac{Nd}{\fs_{K/N}(d)} \left\{ \fc_{K/N}(d)-\frac{f_N(z)}{f_N(\xi(t))} \right\},
 \label{eq:EVIa}
\end{align}
where we set $d:=d(\xi(t),z)$ in the RHS and assume $d>0$.

One sees the monotonicity in $K$ by \eqref{eq:EVIa} and the fact that
$\fs_{K/N}(d)$ and $\fc_{K/N}(d)/\fs_{K/N}(d)$ are increasing in $K$.
The monotonicity in $N$ follows from \eqref{eq:EVIb} since the functions
\[ \frac{N}{\fs_{K/N}(d)}(1-\e^{a/N}), \qquad
 -K\frac{\fs_{K/N}(d/2)^2}{\fs_{K/N}(d)}
 =-\frac{K}{2} \frac{\fs_{K/N}(d/2)}{\fc_{K/N}(d/2)} \]
are non-decreasing in $N \in (-\infty,0)$ for each fixed $a \in \R$.
The last assertion is a consequence of the above monotonicity of the RHS of \eqref{eq:EVIb}
in $N$ together with the convergence of \eqref{eq:EVIb} to \eqref{eq:EVIK} as $N \to -\infty$.
$\qedd$
\end{proof}

It is now well known that $\EVI_K$-gradient curves enjoy several useful estimates.
We can generalize some of them to $\EVI_{K,N}$-gradient curves,
though $\EVI_{K,N}$ is weaker than $\EVI_K$.
Compare the following propositions and corollary with \cite[Propositions~2.17, 2.18]{EKS}.

\begin{proposition}\label{pr:EVIgf}
Let $\xi:[0,T) \lra X$ be an $\EVI_{K,N}$-gradient curve of $f$ such that
\begin{enumerate}[{\rm (1)}]
\item $\xi$ is locally Lipschitz on $(0,T)$,
\item $f \circ \xi$ is locally bounded above on $(0,T)$.
\end{enumerate}
Then $\xi$ is a gradient curve of $f$ also in the sense of Definition~$\ref{df:gf}$.
In particular, $f(\xi(t))$ is non-increasing in $t$.
\end{proposition}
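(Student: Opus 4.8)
The plan is to deduce the energy dissipation identity~\eqref{eq:EDI} by squeezing $f(\xi(t))$ between the general chain-rule (upper gradient) inequality and a lower bound extracted from the evolution variational inequality~\eqref{eq:EVI+}. Assumption~(1) ensures that $|\dot\xi|$ is locally bounded and that $t \mapsto d(\xi(t),z)^2$ is locally Lipschitz, so that \eqref{eq:EVI+} may be differentiated or integrated freely; assumption~(2), which for $N<0$ bounds $f_N \circ \xi = \e^{-(f\circ\xi)/N}$ below by a positive constant, will be used to promote the a.e.\ derivative bounds into a genuine absolute continuity statement for $f \circ \xi$.

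The first key estimate is $|\grad f|(\xi(t)) \le |\dot\xi|(t)$ for a.e.\ $t$. To obtain it I would start from \eqref{eq:EVIb}, bound its left-hand side below by $-d(\xi(t),z)\,|\dot\xi|(t)$ using $|\frac{d}{dt}d(\xi(t),z)| \le |\dot\xi|(t)$, divide by $d:=d(\xi(t),z)>0$, and let $z \to \xi(t)$. As $d \to 0$ one has $\fs_{K/N}(d)/d \to 1$ and $\fs_{K/N}(d/2)^2/\fs_{K/N}(d) \to 0$, while $f_N(z)/f_N(\xi(t)) = \e^{(f(\xi(t))-f(z))/N}$, so that the coefficient $N/\fs_{K/N}(d)$ cancels the factor $-1/N$ in the first-order expansion of $1-\e^{(f(\xi(t))-f(z))/N}$; the surviving term is $-(f(\xi(t))-f(z))/d$, and taking $\limsup_{z\to\xi(t)}$ gives the claim.

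The second key estimate is $-\frac{d}{dt}f(\xi(t)) \ge |\dot\xi|(t)^2$ for a.e.\ $t$. Here I would integrate \eqref{eq:EVI+} in $t$ over $[s,s+\ve]$ with the fixed endpoint $z=\xi(s)$. Since $\fs_{K/N}(0)=0$ and $\fs_{K/N}(\theta)^2=\theta^2+O(\theta^4)$, the left-hand side equals $\frac14\ve^2|\dot\xi|(s)^2 + o(\ve^2)$, while on the right-hand side the expansion $1-\e^{(f(\xi(t))-f(\xi(s)))/N} = -(f(\xi(t))-f(\xi(s)))/N + O((\,\cdot\,)^2)$ again cancels $N$ and produces $-\frac14\ve^2 (f\circ\xi)'(s)+o(\ve^2)$; dividing by $\ve^2$ and letting $\ve \downarrow 0$ yields the bound at every differentiability point of $f\circ\xi$. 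In particular $-\frac{d}{dt}(f\circ\xi) \ge 0$, so $f\circ\xi$ is non-increasing; together with the first estimate and the local boundedness of $|\dot\xi|$, this is what will yield the local absolute continuity of $f\circ\xi$ needed below.

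It then remains to close the two sides. Local absolute continuity and the definition of the descending slope give, for a.e.\ $t$, $-\frac{d}{dt}f(\xi(t)) \le |\grad f|(\xi(t))\,|\dot\xi|(t) \le \frac12\{|\grad f|(\xi(t))^2+|\dot\xi|(t)^2\}$, hence $f(\xi(s))-f(\xi(t)) \le \frac12\int_s^t\{|\grad f|^2+|\dot\xi|^2\}\,dr$. Conversely the first estimate $|\grad f|\le|\dot\xi|$ turns the second estimate into $-\frac{d}{dt}f(\xi(t)) \ge |\dot\xi|(t)^2 \ge \frac12\{|\grad f|(\xi(t))^2+|\dot\xi|(t)^2\}$, so that $f(\xi(s))-f(\xi(t)) \ge \frac12\int_s^t\{|\grad f|^2+|\dot\xi|^2\}\,dr$. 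The two opposite inequalities give \eqref{eq:EDI}, and the asserted monotonicity of $f(\xi(t))$ is immediate because the integrand is nonnegative. I expect the main obstacle to be precisely the regularity step flagged above: converting the a.e.\ pointwise bounds into the global identity hinges on $f\circ\xi$ being locally absolutely continuous (excluding a downward jump or singular part), and it is here that both hypotheses are genuinely needed, alongside a careful justification of the limits $z\to\xi(t)$ and $\ve\downarrow0$ uniformly in the relevant parameters, as carried out in \cite{AGSbook} and in the proof of \cite[Proposition~2.17]{EKS}.
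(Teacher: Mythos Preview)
Your approach is essentially the paper's, and the two key estimates and their combination are exactly as in the original proof. Two small points are worth correcting. First, for $N<0$ one has $f_N=\e^{-f/N}=\e^{f/|N|}$, so assumption~(2) yields a local \emph{upper} bound on $f_N\circ\xi$, not a lower one; it is this upper bound (together with the local bound on $|\dot\xi|$) that controls the right-hand side of the pre-limit inequality from your first step. Second, your derivation of the second estimate presupposes that $f\circ\xi$ is differentiable at~$s$, which has not yet been justified; the paper fixes this ordering by first applying the pre-limit form of the first inequality with $z=\xi(s)$ (instead of passing to the limit $z\to\xi(t)$) to conclude that $f_N\circ\xi$ is locally Lipschitz, and only then performs the integration over $[t,t+\delta]$ and the division by $\delta^2$. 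You flagged this regularity issue yourself, so the repair is straightforward.
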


\begin{proof}
We can follow the line of \cite[Proposition~4.6]{AG} concerning $\EVI_K$.
Fix $t \in (0,T)$ where \eqref{eq:EVI+} holds.
We first observe from the triangle inequality that
\[ \frac{d}{dt}\bigg[ \frac{d(\xi(t),z)^2}{2} \bigg]
 \ge -|\dot{\xi}|(t) d\big( \xi(t),z \big). \]
This and \eqref{eq:EVIa} imply, by abbreviating $d:=d(\xi(t),z)$,
\[ -\frac{\fs_{K/N}(d)}{N}|\dot{\xi}|(t)
 \ge \frac{\fs_{K/N}(d)}{Nd} \frac{d}{dt}\bigg[ \frac{d(\xi(t),z)^2}{2} \bigg]
 \ge \frac{1}{f_N(\xi(t))} \left\{ \fc_{K/N}(d)f_N\big( \xi(t) \big) -f_N(z) \right\} \]
(for $z$ close to $\xi(t)$ if $K<0$).
Dividing by $d$ and letting $z \to \xi(t)$, we obtain
\begin{equation}\label{eq:mgf}
\frac{|\grad f_N|(\xi(t))}{f_N(\xi(t))} \le -\frac{1}{N}|\dot{\xi}|(t), \qquad
 |\grad f|\big( \xi(t) \big) =-\frac{N}{f_N(\xi(t))} |\grad f_N|\big( \xi(t) \big)
 \le |\dot{\xi}|(t).
\end{equation}

In order to estimate $(f \circ \xi)'(t)$,
we deduce from the above calculation with $z=\xi(s)$ for $s$ close to $t$ that
\[ \fc_{K/N}\big( d\big(\xi(s),\xi(t) \big) \big) f_N\big( \xi(t) \big) -f_N\big( \xi(s) \big)
 \le -\frac{\fs_{K/N}(d(\xi(s),\xi(t)))}{N} f_N\big( \xi(t) \big)
 |\dot{\xi}|(t). \]
Since $f_N(\xi(t))$ and $|\dot{\xi}|(t)$ are locally bounded in $t$
by the hypotheses (1) and (2),
we find that $f_N \circ \xi$ is locally Lipschitz on $(0,T)$.
Now, integrate \eqref{eq:EVI+} to obtain for $\delta>0$
\begin{align*}
&\fs_{K/N} \left( \frac{d(\xi(t+\delta),\xi(t))}{2} \right)^2 \\
&\le \frac{N}{2} \int_t^{t+\delta} \left\{ 1-\frac{f_N(\xi(t))}{f_N(\xi(s))} \right\} ds
 -K \int_t^{t+\delta} \fs_{K/N} \left( \frac{d(\xi(s),\xi(t))}{2} \right)^2 ds \\
&= \frac{N}{2} \int_t^{t+\delta} \frac{f_N(\xi(s))-f_N(\xi(t))}{f_N(\xi(s))} \,ds +O(\delta^3).
\end{align*}
Dividing by $\delta^2$ and letting $\delta \downarrow 0$ gives
\[ \frac{|\dot{\xi}|(t)^2}{4}
 \le \frac{N}{4} \frac{(f_N \circ \xi)'(t)}{f_N(\xi(t))}
 = -\frac{(f \circ \xi)'(t)}{4}. \]
Combining this with \eqref{eq:mgf}, we conclude that
\[ (f \circ \xi)'(t) \le -|\dot{\xi}|(t)^2
 \le -\frac{1}{2} \left\{ |\dot{\xi}|(t)^2 +|\grad f|\big( \xi(t) \big)^2 \right\} \]
holds for almost all $t \in (0,T)$.
Integrating this inequality shows the desired identity \eqref{eq:EDI}
since the reverse inequality is readily verified by the local Lipschitz continuity of
$\xi$ and $f \circ \xi$.
$\qedd$
\end{proof}

\begin{remark}\label{rm:EVIgf}
In the case of $N=\infty$, the assumptions (1), (2) in the above proposition are superfluous
since they are consequences of \eqref{eq:EVIK}.
It is unclear (to the author) if (1) and (2) can be removed for general $N \in (-\infty,0)$ or not.
Notice that \eqref{eq:EVIK} immediately implies (2).
The key ingredient for verifying (1) is an expansion bound of the gradient flow
(see \cite[Proposition~4.6]{AG}), however,
at present we can show it only under the Lipschitz continuity of
potential functions when $N \in (-\infty,0)$ (see Theorem~\ref{th:exp}).
\end{remark}

\begin{proposition}\label{pr:EVI1}
Let $\xi:[0,T) \lra X$ be a continuous curve which is locally Lipschitz on $(0,T)$
and $f(\xi(t))<\infty$ for $t \in (0,T)$.
Then, for $K \in \R$ and $N<0$, $\xi$ is an $\EVI_{K,N}$-gradient curve of $f$
if and only if
\begin{equation}\label{eq:EVI'}
\frac{N(\e^{K(t_1-t_0)}-1)}{2K} \left\{ 1-\frac{f_N(z)}{f_N(\xi(t_1))} \right\}
 \ge \e^{K(t_1-t_0)} \fs_{K/N}\left( \frac{d(\xi(t_1),z)}{2} \right)^2
 -\fs_{K/N}\left( \frac{d(\xi(t_0),z)}{2} \right)^2
\end{equation}
holds for all $z \in \cD[f]$ and $0 \le t_0 \le t_1<T$ with
$\sup_{t \in [t_0,t_1]}d(\xi(t),z)<\pi\sqrt{N/K}$ if $K<0$.
\end{proposition}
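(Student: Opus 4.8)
The plan is to read \eqref{eq:EVI+} as a first-order linear differential inequality for the function $G(t):=\fs_{K/N}(d(\xi(t),z)/2)^2$ and to pass between its pointwise and integrated forms by means of the integrating factor $\e^{K(t-t_0)}$, exactly as in the $N>0$ theory of \cite[Proposition~2.18]{EKS} (see also \cite{AGSbook}, \cite{AG}). Since $\xi$ is locally Lipschitz, $t\mapsto d(\xi(t),z)$ and hence $G$ are locally Lipschitz, so $G$ is absolutely continuous and differentiable almost everywhere; this is what legitimizes both passages. The two implications then become ``integrate'' and ``differentiate'', and the only genuine work lies in controlling the time-dependent factor $f_N(\xi(t))$ in the right-hand side of \eqref{eq:EVI+}.

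For the reverse direction I would first put $z=\xi(t_0)$ in \eqref{eq:EVI'}: then $\fs_{K/N}(d(\xi(t_0),z)/2)^2=0$, so the right-hand side is nonnegative, while the prefactor $N(\e^{K(t_1-t_0)}-1)/(2K)$ is strictly negative for $N<0$ and $t_0<t_1$. Hence $1-f_N(\xi(t_0))/f_N(\xi(t_1))\le0$, that is, $f_N\circ\xi$ is non-increasing. Being monotone, $f_N\circ\xi$ is continuous off a countable set, so at almost every $t_0$ both $G$ is differentiable and $f_N\circ\xi$ is continuous. Fixing such a $t_0$ and $z\in\cD[f]$, setting $t_1=t_0+\delta$ and dividing \eqref{eq:EVI'} by $\delta$, the limit $\delta\downarrow0$ recovers \eqref{eq:EVI+}, using $(\e^{K\delta}-1)/(K\delta)\to1$ and the splitting $\e^{K\delta}G(t_0+\delta)-G(t_0)=G(t_0+\delta)(\e^{K\delta}-1)+(G(t_0+\delta)-G(t_0))$.

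For the forward direction I would multiply \eqref{eq:EVI+} by $\e^{K(t-t_0)}$, recognize the left-hand side as $\frac{d}{dt}[\e^{K(t-t_0)}G(t)]$, and integrate over $[t_0,t_1]$ to get
\[
\e^{K(t_1-t_0)}G(t_1)-G(t_0)\le\frac{N}{2}\int_{t_0}^{t_1}\e^{K(t-t_0)}\left(1-\frac{f_N(z)}{f_N(\xi(t))}\right)dt .
\]
Because $\int_{t_0}^{t_1}\e^{K(t-t_0)}\,dt=(\e^{K(t_1-t_0)}-1)/K$, the asserted bound \eqref{eq:EVI'} follows once the integrand $1-f_N(z)/f_N(\xi(t))$ is replaced by its value at $t_1$; since $N<0$ the required inequality reverses, so this amounts precisely to checking that $t\mapsto 1-f_N(z)/f_N(\xi(t))$ is non-increasing, i.e.\ again that $f_N\circ\xi$ is non-increasing. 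The sign bookkeeping must be watched throughout, as every monotone comparison valid in the $N>0$ case is reversed upon dividing by $N/2<0$.

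The main obstacle is exactly this monotonicity of $f_N\circ\xi$ in the forward direction: unlike the reverse direction, where it drops out of \eqref{eq:EVI'} for free, here I only have \eqref{eq:EVI+} together with the local Lipschitz continuity of $\xi$. I would recover it from the slope and derivative estimates in the proof of Proposition~\ref{pr:EVIgf}: the first-variation and triangle-inequality argument there gives $|\grad f|(\xi(t))\le|\dot\xi|(t)$, and a second-order expansion of \eqref{eq:EVI+} yields the dissipation bound $(f\circ\xi)'(t)\le-|\dot\xi|(t)^2\le0$, whence $f\circ\xi$, and therefore $f_N\circ\xi$, is non-increasing. The delicate point, flagged in Remark~\ref{rm:EVIgf}, is that this expansion presupposes enough regularity (local Lipschitz continuity, hence local boundedness, of $f_N\circ\xi$) to differentiate; establishing this from the local Lipschitz continuity of $\xi$ alone---via the two-sided comparison $\fc_{K/N}(d(\xi(s),\xi(t)))f_N(\xi(t))-f_N(\xi(s))\le-N^{-1}\fs_{K/N}(d(\xi(s),\xi(t)))f_N(\xi(t))|\dot\xi|(t)$ applied in both time directions---together with the attendant integrability of the right-hand side of \eqref{eq:EVI+}, is the step requiring the most care.
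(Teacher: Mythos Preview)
Your proposal is correct and follows essentially the same route as the paper: rewrite \eqref{eq:EVI+} via the integrating factor $\e^{Kt}$, integrate using the monotonicity of $f_N\circ\xi$ supplied by Proposition~\ref{pr:EVIgf} for the forward direction, and differentiate for the converse.

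Two remarks are worth making. First, your converse is more laborious than necessary. By sending $t_0\uparrow t_1$ (rather than $t_1\downarrow t_0$), the factor $f_N(\xi(t_1))$ on the left-hand side of \eqref{eq:EVI'} is already at the point you are differentiating, so no continuity or monotonicity of $f_N\circ\xi$ is required at all; the limit recovers \eqref{eq:EVI+} at $t_1$ for every $t_1$ at which $G$ is differentiable, hence almost everywhere. This is exactly how the paper handles it in one line, and it spares you the detour through $z=\xi(t_0)$ and the a.e.\ continuity of monotone functions. Second, you are right to flag the issue of hypothesis~(2) when invoking Proposition~\ref{pr:EVIgf} in the forward direction: the paper simply cites that proposition without comment, whereas you sketch how the two-sided comparison $\fc_{K/N}(d)f_N(\xi(t))-f_N(\xi(s))\le -N^{-1}\fs_{K/N}(d)f_N(\xi(t))|\dot\xi|(t)$ (with $s$ and $t$ interchanged) together with the local essential boundedness of $|\dot\xi|$ yields local boundedness of $f_N\circ\xi$ by a Gr\"onwall-type bootstrap. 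That observation fills a small gap the paper leaves implicit.
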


When $K=0$, $\{\e^{K(t_1-t_0)}-1\}/K$ in the LHS of \eqref{eq:EVI'} is read as $t_1-t_0$.
Notice that $\{\e^{K(t_1-t_0)}-1\}/K$ is nonnegative for all $K \in \R$.

\begin{proof}
Observe that \eqref{eq:EVI+} is equivalent to
\[ \frac{d}{dt}\bigg[ \e^{Kt} \fs_{K/N}\left( \frac{d(\xi(t),z)}{2} \right)^2 \bigg]
 \le \frac{N\e^{Kt}}{2} \left\{ 1-\frac{f_N(z)}{f_N(\xi(t))} \right\}. \]
If $\xi$ is an $\EVI_{K,N}$-gradient curve, then $f_N \circ \xi$ is non-increasing
by Proposition~\ref{pr:EVIgf} and hence we have by integration
\begin{equation}\label{eq:EVI'+}
\e^{Kt_1} \fs_{K/N}\left( \frac{d(\xi(t_1),z)}{2} \right)^2
 -\e^{Kt_0} \fs_{K/N}\left( \frac{d(\xi(t_0),z)}{2} \right)^2
 \le \frac{N(\e^{Kt_1}-\e^{Kt_0})}{2K} \left\{ 1-\frac{f_N(z)}{f_N(\xi(t_1))} \right\},
\end{equation}
where $(\e^{Kt_1}-\e^{Kt_0})/K$ is read as $t_1-t_0$ if $K=0$.
This is equivalent to \eqref{eq:EVI'}.
The converse implication is immediate by dividing \eqref{eq:EVI'+} by $t_1-t_0$
and letting $t_0 \to t_1$.
$\qedd$
\end{proof}

\begin{corollary}\label{cr:EVI}
Let $\xi:[0,T) \lra X$ be an $\EVI_{K,N}$-gradient curve of $f$
which is locally Lipschitz on $(0,T)$.
Then the following hold$:$

\begin{enumerate}[{\rm (i)}]
\item We have the \emph{uniform regularizing bound:}
\[ \frac{f_N(z)}{f_N(\xi(t))} \ge 1+\frac{2K}{N(\e^{Kt}-1)}
 \fs_{K/N}\left( \frac{d(\xi(0),z)}{2} \right)^2 \]
for all $z \in \cD[f]$ and $t \in (0,T)$ with
$\sup_{s \in [0,t]}d(\xi(s),z)<\pi\sqrt{N/K}$ if $K<0$.

\item If $f$ is bounded below, then we have the \emph{uniform continuity estimate:}
\[ \fs_{K/N}\left( \frac{d(\xi(t_0),\xi(t_1))}{2} \right)^2
 \le \frac{N(1-\e^{K(t_0-t_1)})}{2K} \left\{ 1-\frac{f_N(\xi(t_0))}{\inf_X f_N} \right\} \]
for all $0<t_0 \le t_1<T$ with $\sup_{t \in [t_0,t_1]}d(\xi(t),\xi(t_0))<\pi\sqrt{N/K}$ if $K<0$.
\end{enumerate}
\end{corollary}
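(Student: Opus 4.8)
The plan is to read both estimates directly off the integrated form \eqref{eq:EVI'} furnished by Proposition~\ref{pr:EVI1}, which is available here because $\xi$ is assumed locally Lipschitz. The entire content is then a careful bookkeeping of signs, keeping in mind that $N<0$ while $(\e^{Ks}-1)/K \ge 0$ for every $K \in \R$ and $s \ge 0$; consequently every prefactor of the form $N(\e^{Ks}-1)/(2K)$ is nonpositive, so that dividing an inequality by such a factor reverses it.

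For (i), I would specialise \eqref{eq:EVI'} to $t_0=0$ and $t_1=t$, obtaining
\[ \frac{N(\e^{Kt}-1)}{2K}\left\{ 1-\frac{f_N(z)}{f_N(\xi(t))} \right\} \ge \e^{Kt}\fs_{K/N}\left( \frac{d(\xi(t),z)}{2} \right)^2 -\fs_{K/N}\left( \frac{d(\xi(0),z)}{2} \right)^2. \]
Since $\fs_{K/N}(\cdot)^2 \ge 0$ and $\e^{Kt}>0$, the first term on the right is nonnegative and may be discarded, leaving
\[ \frac{N(\e^{Kt}-1)}{2K}\left\{ 1-\frac{f_N(z)}{f_N(\xi(t))} \right\} \ge -\fs_{K/N}\left( \frac{d(\xi(0),z)}{2} \right)^2. \]
As the prefactor $N(\e^{Kt}-1)/(2K)$ is strictly negative for $t>0$, dividing by it reverses the inequality and yields precisely the claimed regularizing bound.

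For (ii), I would instead take $z=\xi(t_0)$ in \eqref{eq:EVI'}. Then $\fs_{K/N}(d(\xi(t_0),\xi(t_0))/2)^2$ vanishes, and after dividing by $\e^{K(t_1-t_0)}>0$ and using $(\e^{K(t_1-t_0)}-1)/\e^{K(t_1-t_0)}=1-\e^{K(t_0-t_1)}$ one reaches
\[ \fs_{K/N}\left( \frac{d(\xi(t_0),\xi(t_1))}{2} \right)^2 \le \frac{N(1-\e^{K(t_0-t_1)})}{2K}\left\{ 1-\frac{f_N(\xi(t_0))}{f_N(\xi(t_1))} \right\}. \]
The hypothesis that $f$ is bounded below gives $f_N(\xi(t_1)) \ge \inf_X f_N>0$, whence $f_N(\xi(t_0))/f_N(\xi(t_1)) \le f_N(\xi(t_0))/\inf_X f_N$; since the prefactor $N(1-\e^{K(t_0-t_1)})/(2K)$ is again nonpositive, this substitution only enlarges the right-hand side and produces the asserted continuity estimate.

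The argument is essentially routine; the single point demanding care—and the only place an error could creep in—is the repeated sign reversal upon dividing by the nonpositive prefactors, together with the observation that in (ii) the replacement of $f_N(\xi(t_1))$ by $\inf_X f_N$ runs in the correct direction exactly because that prefactor is $\le 0$ (so no monotonicity of $f_N\circ\xi$ is needed, only the global lower bound). I would also verify that the restrictions $d<\pi\sqrt{N/K}$ when $K<0$ are exactly those inherited from Proposition~\ref{pr:EVI1} under the stated suprema, so that every occurrence of $\fs_{K/N}$ lies in the domain where it is real and its square is genuinely nonnegative.
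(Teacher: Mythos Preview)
Your proof is correct and follows precisely the paper's approach: part (i) is obtained by setting $t_0=0$, $t_1=t$ in \eqref{eq:EVI'}, and part (ii) by setting $z=\xi(t_0)$. You have simply written out the sign bookkeeping that the paper leaves implicit, and your handling of the nonpositive prefactors and of the replacement $f_N(\xi(t_1))\to\inf_X f_N$ in (ii) is accurate.
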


\begin{proof}
(i) Let $t_0=0$ and $t_1=t$ in \eqref{eq:EVI'}.

(ii) Let $z=\xi(t_0)$ in \eqref{eq:EVI'}.
$\qedd$
\end{proof}

\subsection{An expansion bound for gradient flows of Lipschitz $(K,N)$-convex functions}\label{ssc:cont}

The expansion bound (also called the contraction property)
is a key tool for analyzing gradient flows of convex functions.
In the $N>0$ case, it was shown in \cite[Theorem~2.19]{EKS} that
the evolution variational inequality $\EVI_{K,N}$ implies an expansion bound
without the Lipschitz continuity assumption on potential functions.

Although we will argue on Riemannian manifolds,
the key ingredient is a kind of evolution variational inequality \eqref{eq:EVI?}
which makes sense also in the metric measure setting.
We remark that \eqref{eq:EVI?} is a global inequality,
while \eqref{eq:EVI} is not global when $K<0$.

\begin{theorem}\label{th:exp}
Let $f:M \lra \R$ be a Lipschitz $(K,N)$-convex function on a Riemannian manifold $(M,g)$
such that $N<0$ and $|\nabla f| \le L$ almost everywhere.
Then, given any $x,y \in M$ and the gradient curves $\xi, \zeta:[0,\infty) \lra M$ of $f$
with $\xi(0)=x, \zeta(0)=y$, we have
\begin{equation}\label{eq:exp}
d_g\big( \xi(t_0),\zeta(t_1) \big)^2 \le 2\e^{-\Theta(t_0,t_1)}
 \left\{ \frac{d_g(x,y)^2}{2} -\frac{N(\sqrt{t_1}-\sqrt{t_0})^2}{\Theta(t_0,t_1)}
 (\e^{\Theta(t_0,t_1)}-1 ) \right\}
\end{equation}
for all $t_0,t_1>0$, where we set
\[ \Theta(t_0,t_1) =\Theta_{K,N,L}(t_0,t_1)
 :=\left( 2K+\frac{4L^2}{N} \right) \frac{t_1+\sqrt{t_1 t_0}+t_0}{3} \]
and $(\e^{\Theta(t_0,t_1)}-1)/\Theta(t_0,t_1)$ is read as $1$ if $\Theta(t_0,t_1)=0$.
\end{theorem}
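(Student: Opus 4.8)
The plan is to isolate two ingredients: a \emph{global} evolution variational inequality extracted from the Lipschitz bound, and a coupling of the two gradient curves along a reparametrisation in the square root of time. First I would address the globality issue. The obstruction in \eqref{eq:EVI} (equivalently in the rewritten form \eqref{eq:EVIa}) is that, for $K<0$, one has $K/N>0$, so the comparison function $\fs_{K/N}$ oscillates and forces the restriction $d_g(\xi(t),z)<\pi\sqrt{N/K}$; this is exactly why \eqref{eq:EVI} is not global. The Lipschitz hypothesis is what removes it: inserting the Cauchy--Schwarz bound $\langle\nabla f,v\rangle^2\le L^2|v|^2$ into the defining inequality \eqref{eq:KN} gives
\[ \Hess f(v,v)\ge\Big(K+\frac{L^2}{N}\Big)|v|^2, \]
so $f$ is genuinely semiconvex and the second-order comparison along a geodesic becomes a globally valid \emph{quadratic} one rather than an oscillatory trigonometric one. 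Carrying this through the computation of Lemma~\ref{lm:EVI}(i) should produce a global inequality in the place of \eqref{eq:EVI}, valid for every $z\in M$ without any distance restriction.

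More precisely, I would aim for a global inequality of the schematic form
\[ \frac{d}{dt}\Big[\frac{d_g(\xi(t),z)^2}{2}\Big]+\Big(K+\frac{2L^2}{N}\Big)\frac{d_g(\xi(t),z)^2}{2}\le N\Big(1-\frac{f_N(z)}{f_N(\xi(t))}\Big), \]
which passes the consistency check of reducing to the classical $\EVI_K$ inequality \eqref{eq:EVIK} as $N\to-\infty$. Here the Lipschitz bound enters \emph{twice}: once to pass to genuine semiconvexity, and once more to control the entropy ratio through $|f(z)-f(\xi(t))|\le L\,d_g(\xi(t),z)$ (which is also what keeps the entropy remainder from blowing up when $N<0$); this double use is what degrades the sharp constant $K+L^2/N$ into the stated $K+2L^2/N$. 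For the subsequent bookkeeping it is cleaner to integrate this in the spirit of the exponential form \eqref{eq:EVI'} of Proposition~\ref{pr:EVI1}, so that the factor $e^{-\Theta}$ is packaged automatically.

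With such a global inequality in hand, the expansion bound follows by a coupling argument. Given the two gradient curves I would study $\Phi(s):=\tfrac12 d_g(\xi(\alpha(s)),\zeta(\beta(s)))^2$ for $s\in[0,1]$, with the time changes $\alpha,\beta$ chosen so that the square roots $\sqrt{\alpha},\sqrt{\beta}$ interpolate from $0$ to $\sqrt{t_0}$, respectively $\sqrt{t_1}$; thus $\Phi(0)=\tfrac12 d_g(x,y)^2$, $\Phi(1)=\tfrac12 d_g(\xi(t_0),\zeta(t_1))^2$, and the ``time mismatch'' in the square-root variable is the constant $\sqrt{t_1}-\sqrt{t_0}$. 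Differentiating $\Phi$ (with the usual first-variation/cut-locus care as in Lemma~\ref{lm:EVI}(i)) and applying the global inequality to each flow with the other endpoint frozen, the two curvature contributions assemble, after integration against the square-root parametrisation $r(s)=(1-s)\sqrt{t_0}+s\sqrt{t_1}$, into $\Theta(t_0,t_1)=(2K+4L^2/N)\int_0^1 r(s)^2\,ds$, while the two entropy contributions, estimated by the arithmetic--geometric mean inequality, collapse into the constant $-N(\sqrt{t_1}-\sqrt{t_0})^2$. This leaves a linear Grönwall inequality whose integration over $[0,1]$ reproduces the closed form \eqref{eq:exp}, including the reading of $(e^{\Theta}-1)/\Theta$ as $1$ in the degenerate case.

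The hard part will be the first step: establishing the global inequality for $N<0$ with the correct sign on the entropy term. Because the $(K,N)$-inequality \eqref{eq:wKN+} is reversed relative to the $N>0$ case of \cite{EKS}, the arithmetic--geometric mean estimate in the coupling points the ``wrong'' way, and the argument only closes because the Lipschitz control on $f_N(z)/f_N(\xi(t))$ supplies the missing upper bound; arranging this so that the entropy terms combine favourably is the delicate point and is precisely where the Lipschitz assumption is indispensable. Indeed, an expansion bound for general (non-Lipschitz) $(K,N)$-convex functions is listed as an open problem in the introduction (see item~(a) and Remark~\ref{rm:EVIgf}), which is why I expect no shortcut here. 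The remaining technicalities---differentiability of $\Phi$ almost everywhere, and (if one insists on working below $\mathcal{C}^2$) a smoothing of $f$ preserving the semiconvexity constant---I expect to be routine by comparison.
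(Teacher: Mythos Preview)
Your approach is genuinely different from the paper's, and the crucial step you flag as ``the hard part'' is not actually resolved in your sketch.

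The paper does \emph{not} pass through a global $\EVI$-type inequality and then couple two such inequalities in the style of \cite[Theorem~2.19]{EKS}. Instead it works directly with the first variation along the minimal geodesic $\gamma$ joining $\xi(st_0)$ to $\zeta(st_1)$: setting $u(s)=d_g(\xi(st_0),\zeta(st_1))^2/2$ one has
\[
u'(s)\le -t_1(f\circ\gamma)'_-(1)+t_0(f\circ\gamma)'_+(0)
\le -\int_0^1\!\Big\{\dot t_\tau\,(f\circ\gamma)'(\tau)
+t_\tau\Big(K|\dot\gamma|^2+\frac{(f\circ\gamma)'(\tau)^2}{N}\Big)\Big\}\,d\tau,
\]
where $t_\tau=((1-\tau)\sqrt{t_0}+\tau\sqrt{t_1})^2$ and the second inequality is integration by parts together with the pointwise $(K,N)$-convexity $(f\circ\gamma)''\ge K|\dot\gamma|^2+(f\circ\gamma)'^2/N$. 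The Lipschitz bound then enters purely algebraically: one splits $1/N=-1/N+2/N$, completes the square on $\dot t_\tau(f\circ\gamma)'-\tfrac{t_\tau}{N}(f\circ\gamma)'^2$ (discarding a nonpositive term since $N<0$), and bounds the remaining $\tfrac{2t_\tau}{N}(f\circ\gamma)'^2\ge\tfrac{2L^2}{N}t_\tau|\dot\gamma|^2$. This yields $u'(s)\le -N(\sqrt{t_1}-\sqrt{t_0})^2-\Theta(t_0,t_1)u(s)$ directly, and Gr\"onwall finishes. No $f_N$-ratios ever appear.

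Your route, by contrast, keeps the $f_N$-ratios on the right-hand side and then has to estimate, after adding the two $\EVI$'s, a term of the form $N[\alpha'(1-a)+\beta'(1-a^{-1})]$ with $a=f_N(\zeta)/f_N(\xi)$. You write that this ``collapses via AM--GM into $-N(\sqrt{t_1}-\sqrt{t_0})^2$'', but for $N<0$ the AM--GM inequality $\alpha' a+\beta' a^{-1}\ge 2\sqrt{\alpha'\beta'}$ gives only a \emph{lower} bound on this quantity, as you yourself note in the next paragraph. Your proposed fix---using $|f(\xi)-f(\zeta)|\le Ld$ to bound $a$---does give an upper bound, but one of the shape $(\alpha'+\beta')(e^{Ld/|N|}-1)$, which depends on $d$ and does not obviously reduce to the clean constant $-N(\sqrt{t_1}-\sqrt{t_0})^2$ with only the stated loss of $L^2/N$ in the exponent. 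Moreover, your accounting of where the degradation $K+L^2/N\to K+2L^2/N$ occurs is inconsistent: you place it inside the derivation of the global $\EVI$, yet also need Lipschitz again to repair the AM--GM step. (Incidentally, the ``schematic'' inequality you write down with right-hand side $N(1-f_N(z)/f_N(\xi(t)))$ is actually \emph{weaker} than the plain $\EVI_{K+L^2/N}$, since $N(1-e^{a/N})\ge -a$ for all $a$ when $N<0$; so it does not encode any genuine $(K,N)$-structure beyond Claim~\ref{cl:Lip}.)

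In short: the paper bypasses the entropy-ratio difficulty entirely by never forming those ratios, working instead with the derivative $(f\circ\gamma)'$ along the connecting geodesic and an explicit square-completion. Your $\EVI$-coupling strategy may be salvageable, but the sketch as written does not close the gap at the entropy estimate, and if it does close, it is not clear it yields exactly \eqref{eq:exp} rather than a variant.
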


\begin{proof}
We first show that $f$ is $(K+L^2/N)$-convex,
which yields that $\xi$ and $\zeta$ are uniquely determined and Lipschitz.

\begin{claim}\label{cl:Lip}
$f$ is $(K+L^2/N)$-convex.
\end{claim}

\begin{proof}
Fix a unit speed minimal geodesic $\gamma:[0,l] \lra M$ and $t \in (0,l)$
at where $f \circ \gamma$ is differentiable.
Similarly to Lemma~\ref{lm:KN},
it follows from the $(K,N)$-convexity of $f$ that
\[ f_N\big( \gamma(t+\ve) \big) +f_N\big( \gamma(t-\ve) \big)
 \ge 2\bigg\{ 1-\frac{K}{2N}\ve^2 +O_{K,N}(\ve^4) \bigg\} f_N\big( \gamma(t) \big). \]
The LHS is expanded as
\begin{align*}
&\e^{-f(\gamma(t))/N} \bigg\{ 1+\frac{f(\gamma(t))-f(\gamma(t+\ve))}{N}
 +\frac{1}{2} \bigg( \frac{f(\gamma(t))-f(\gamma(t+\ve))}{N} \bigg)^2 +O_{N,L}(\ve^3) \bigg\} \\
&+ \e^{-f(\gamma(t))/N} \bigg\{ 1+\frac{f(\gamma(t))-f(\gamma(t-\ve))}{N}
 +\frac{1}{2} \bigg( \frac{f(\gamma(t))-f(\gamma(t-\ve))}{N} \bigg)^2 +O_{N,L}(\ve^3) \bigg\} \\
&\le \e^{-f(\gamma(t))/N} \bigg\{ 2+\frac{2f(\gamma(t))-f(\gamma(t+\ve))-f(\gamma(t-\ve))}{N}
 +\frac{L^2}{N^2}\ve^2 +O_{N,L}(\ve^3) \bigg\}.
\end{align*}
Thus we have
\[ f\big( \gamma(t+\ve) \big) +f\big( \gamma(t-\ve) \big) -2f\big( \gamma(t) \big)
 \ge \bigg( K+\frac{L^2}{N} \bigg) \ve^2 +O_{K,N,L}(\ve^3), \]
where $O_{K,N,L}(\ve^3)$ depends only on $K,N$ and $L$.
Hence $f$ is $(K+L^2/N)$-convex.
$\hfill \diamondsuit$
\end{proof}

Put $u(s):=d_g(\xi(st_0),\zeta(st_1))^2/2$ and fix $s \in (0,1)$ such that
$u$, $\xi$ and $\zeta$ are differentiable at $s$, $st_0$ and $st_1$, respectively,
and that $(f \circ \xi)'_+(st_0)=-|\grad f|(\xi(st_0))^2$
and $(f \circ \zeta)'_+(st_1)=-|\grad f|(\zeta(st_1))^2$ hold.
Let $\gamma:[0,1] \lra M$ be a minimal geodesic from $\xi(st_0)$ to $\zeta(st_1)$.
Then it follows from the first variation formula that
\[ u'(s) =t_1 \langle \dot{\zeta}(st_1),\dot{\gamma}(1) \rangle
 -t_0 \langle \dot{\xi}(st_0),\dot{\gamma}(0) \rangle
 \le -t_1(f \circ \gamma)'_-(1) +t_0(f \circ \gamma)'_+(0), \]
where the latter inequality holds since $\xi$ and $\zeta$ are gradient curves of $f$
(see \cite[Lemma~4.2]{Ogra} for instance).
Notice that $f \circ \gamma$ is twice differentiable almost everywhere
since $f$ is $(K+L^2/N)$-convex.
Thus, by interpolating $t_{\tau}:=\{(1-\tau)\sqrt{t_0}+\tau\sqrt{t_1}\}^2$
between $t_0$ and $t_1$, we deduce from the $(K,N)$-convexity of $f$ that
\begin{align}
&-t_1(f \circ \gamma)'_-(1) +t_0(f \circ \gamma)'_+(0)
 \le -\int_0^1 \frac{d}{d\tau}\left[ t_{\tau} (f \circ \gamma)'(\tau) \right] d\tau \nonumber\\
&\le -\int_0^1 \left\{ \dot{t}_{\tau} (f \circ \gamma)'(\tau)
 +t_{\tau} \left( K|\dot{\gamma}(\tau)|^2 +\frac{(f \circ \gamma)'(\tau)^2}{N} \right) \right\} d\tau.
 \label{eq:EVI?}
\end{align}
Rewrite the RHS and estimate it by the Lipschitz continuity as
\begin{align*}
&-\int_0^1 \left\{ \dot{t}_{\tau} (f \circ \gamma)'(\tau)
 -\frac{t_{\tau}}{N}(f \circ \gamma)'(\tau)^2
 +t_{\tau} \left( K|\dot{\gamma}(\tau)|^2+\frac{2(f \circ \gamma)'(\tau)^2}{N} \right) \right\} d\tau \\
&\le -\frac{N}{4} \int_0^1 \frac{(\dot{t}_{\tau})^2}{t_{\tau}} \,d\tau
 -\int_0^1 t_{\tau} \left( K+\frac{2L^2}{N} \right) |\dot{\gamma}(\tau)|^2 \,d\tau.
\end{align*}
In the RHS, we calculate
\[ \int_0^1 \frac{(\dot{t}_{\tau})^2}{t_{\tau}} \,d\tau=4(\sqrt{t_1}-\sqrt{t_0})^2, \qquad
 \int_0^1 t_{\tau} \,d\tau=\frac{t_1+\sqrt{t_1 t_0}+t_0}{3}. \]
Thus we obtain
\[ u'(s) \le -N(\sqrt{t_1}-\sqrt{t_0})^2 -\Theta(t_0,t_1) u(s). \]
This implies that
\[ \e^{s\Theta(t_0,t_1)} u(s)
 +\frac{N(\sqrt{t_1}-\sqrt{t_0})^2}{\Theta(t_0,t_1)}(\e^{s\Theta(t_0,t_1)}-1) \]
is non-increasing in $s$, then \eqref{eq:exp} immediately follows.
$\qedd$
\end{proof}

Choosing the same time $t_0=t_1=:t$ in \eqref{eq:exp} yields
\[ d_g\big( \xi(t),\zeta(t) \big) \le \e^{-(K+2L^2/N)t} d_g(x,y). \]
This is slightly worse than the bound $d_g(\xi(t),\zeta(t)) \le \e^{-(K+L^2/N)t}d_g(x,y)$
directly derived from the $(K+L^2/N)$-convexity of $f$.
In either bound, letting $N \to -\infty$ recovers the \emph{$K$-contraction property}:
\[ d_g\big( \xi(t),\zeta(t) \big) \le \e^{-Kt} d_g(x,y). \]
It is essential to discuss on ``Riemannian'' spaces,
otherwise the $K$-convexity does not necessarily imply the $K$-contraction property
(see \cite{OSnc} for an investigation on Finsler manifolds).

\begin{remark}\label{rm:Lip}
(a)
In general, the $L$-Lipschitz continuity of a potential function gives the immediate bound:
\[ d\big( \xi(t),\zeta(t) \big) \le d\big( \xi(0),\zeta(0) \big) +2Lt. \]
We remark that, however, even the uniqueness of gradient curves fails
for general Lipschitz functions
(see \cite[Example~4.23]{AG} for a simple example in the $\ell^2_{\infty}$-space).

(b)
The expansion bound in \cite[Theorem~2.19]{EKS} is, for $K=0$ and $N>0$,
\[ d\big( \xi(t_0),\zeta(t_1) \big)^2 \le d(x,y)^2 +2N(\sqrt{t_1}-\sqrt{t_0})^2 \]
(see also \cite{BGL}).
Obviously this inequality can not be extended to $N<0$ since it is stronger than that for $N>0$.

(c)
Under Bochner's inequality \eqref{eq:NBW} with $N \ge 1$
(the \emph{analytic curvature-dimension condition \`a la Bakry--\'Emery}),
another dimension dependent contraction property for heat semigroup
in terms of the \emph{Markov transportation distance} follows from \cite[Theorem~4.5]{BGG}.
This contraction is different from the one in \cite{EKS} and seems to make sense also for $N<0$,
whereas the author does not know if it can be extended to $N<0$.
\end{remark}

\section{Curvature-dimension condition}\label{sc:CD}

We switch to the related subject of curvature-dimension condition.
We first define the weighted Ricci curvature $\Ric_N$ followed by associated Bochner's inequality.
Then we introduce the original, reduced and entropic curvature-dimension conditions
and discuss their applications.

\subsection{Weighted Ricci curvature}\label{ssc:wRic}

Let $(M,g)$ be an $n$-dimensional Riemannian manifold with $n \ge 2$.
We denote the Riemannian volume measure by $\vol_g$
and fix a weighted measure $\fm=\e^{-\psi}\vol_g$ with $\psi \in \cC^{\infty}(M)$.
Then the Laplacian and Ricci curvature are modified into
$\Delta_{\fm} u:=\Delta u-\langle \nabla u,\nabla\psi \rangle$ and
\[ \Ric_N(v):=\Ric(v)+\Hess\psi(v,v)-\frac{\langle \nabla\psi,v \rangle^2}{N-n} \]
for $v \in TM$.
The parameter $N$ had been usually chosen from $[n,\infty]$,
and the bound $\Ric_N(v) \ge K|v|^2$ is known to imply
many analytic and geometric consequences corresponding to $\Ric \ge K$ as well as $\dim \le N$
(see \cite{Qi}, \cite{Lo}).
The generalization admitting negative values $N<0$
appeared and turned out meaningful in \cite{OT1} and \cite{OT2}.
We will fix $N<0$ as in the previous sections.
Letting $N \to -\infty$ in $\Ric_N$ recovers the \emph{Bakry--\'Emery tensor}
$\Ric +\Hess\psi$ which is usually regarded as $\Ric_{\infty}$.

Let us give applications of $\Ric_N$ with $N<0$
before discussing the curvature-dimension condition.
From the \emph{Bochner--Weitzenb\"ock formula} for $\Ric_{\infty}$:
\begin{equation}\label{eq:BW}
\Delta_{\fm}\left( \frac{|\nabla u|^2}{2} \right) -\langle \nabla \Delta_{\fm}u,\nabla u \rangle
 =\Ric_{\infty}(\nabla u) +\|\!\Hess u\|^2
\end{equation}
($\|\cdot\|$ denotes the Hilbert--Schmidt norm),
we can derive the following inequality similarly to the case of $N \in [n,\infty]$.

\begin{theorem}[Bochner's inequality]\label{th:BW}
For any $u \in \cC^{\infty}(M)$ and $N<0$, we have
\begin{equation}\label{eq:NBW}
\Delta_{\fm}\left( \frac{|\nabla u|^2}{2} \right) -\langle \nabla \Delta_{\fm}u,\nabla u \rangle
 \ge \Ric_N(\nabla u) +\frac{(\Delta_{\fm}u)^2}{N}.
\end{equation}
\end{theorem}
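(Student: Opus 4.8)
The plan is to deduce \eqref{eq:NBW} from the Bochner--Weitzenb\"ock formula \eqref{eq:BW} by a pointwise algebraic argument, exactly as in the classical case $N \in [n,\infty]$; the only new point is to verify that the relevant signs still cooperate when $N<0$. Since the left-hand sides of \eqref{eq:BW} and \eqref{eq:NBW} coincide, I would substitute \eqref{eq:BW} and use the definition of $\Ric_N$, which yields
\[ \Ric_{\infty}(\nabla u)-\Ric_N(\nabla u)=\frac{\langle \nabla\psi,\nabla u \rangle^2}{N-n}, \]
so that \eqref{eq:NBW} becomes equivalent to the purely pointwise inequality
\[ \|\Hess u\|^2 \ge \frac{(\Delta_{\fm}u)^2}{N}-\frac{\langle \nabla\psi,\nabla u \rangle^2}{N-n}. \]

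The first step is the elementary \emph{trace inequality} $\|\Hess u\|^2 \ge (\Delta u)^2/n$, which is just Cauchy--Schwarz applied to the eigenvalues of $\Hess u$ (equivalently $(\trace A)^2 \le n\|A\|^2$ for a symmetric $A$, recalling $\trace\Hess u=\Delta u$ and $\dim M=n$). Since this bounds $\|\Hess u\|^2$ from below, it suffices to prove
\[ \frac{(\Delta u)^2}{n}-\frac{(\Delta_{\fm}u)^2}{N}+\frac{\langle \nabla\psi,\nabla u \rangle^2}{N-n} \ge 0. \]
Abbreviating $a:=\Delta u$ and $b:=\langle \nabla\psi,\nabla u \rangle$, so that $\Delta_{\fm}u=a-b$, this reduces to the algebraic claim that the binary quadratic form
\[ Q(a,b):=\frac{a^2}{n}-\frac{(a-b)^2}{N}+\frac{b^2}{N-n} \]
is nonnegative for all $a,b\in\R$.

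The final step is to complete the square. Expanding, $Q$ has coefficients $\tfrac{N-n}{nN}$ on $a^2$, $\tfrac{2}{N}$ on $ab$, and $\tfrac{n}{N(N-n)}$ on $b^2$, and its discriminant $\tfrac{4}{N^2}-4\cdot\tfrac{N-n}{nN}\cdot\tfrac{n}{N(N-n)}$ vanishes; hence $Q$ is a perfect square up to the sign of its leading coefficient. This is exactly where $N<0$ enters: since $n\ge 2$, both $N$ and $N-n$ are negative, so the leading coefficient $\tfrac{N-n}{nN}$ is \emph{positive}, and therefore
\[ Q(a,b)=\bigg( \sqrt{\tfrac{N-n}{nN}}\,a-\sqrt{\tfrac{n}{N(N-n)}}\,b \bigg)^{2}\ge 0, \]
which finishes the proof. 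The main thing to watch — and essentially the only difference from the $N\ge n$ case — is precisely this sign bookkeeping: I must confirm that the direction of the trace inequality is the one needed, and that the sign flips of $1/N$ and $1/(N-n)$ compensate so that $Q$ is a genuine square rather than its negative. Both hold because $N$ and $N-n$ are negative together, so no obstruction remains.
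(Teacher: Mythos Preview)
Your proof is correct and follows essentially the same route as the paper: both start from the Bochner--Weitzenb\"ock formula \eqref{eq:BW}, apply the trace inequality $\|\Hess u\|^2 \ge (\Delta u)^2/n$, and then verify a pointwise quadratic inequality which is a perfect square because $N$ and $N-n$ have the same (negative) sign. The only cosmetic difference is the labeling of variables: the paper sets $a=\Delta_{\fm}u$, $b=\langle\nabla u,\nabla\psi\rangle$ and exhibits the explicit identity $\frac{(a+b)^2}{n}=\frac{a^2}{N}-\frac{b^2}{N-n}+\frac{N(N-n)}{n}\big(\frac{a}{N}+\frac{b}{N-n}\big)^2$, whereas you set $a=\Delta u$ and compute the discriminant of the resulting form---the content is identical.
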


\begin{proof}
This is done by calculation similarly to the case of $N \ge n$,
the details can be found in \cite[Theorem~3.3]{OSbw} for example.
Let $B$ be the matrix representation of $\Hess u$ in an orthonormal coordinate.
Since $B$ is symmetric, we have
\[ \|\!\Hess u\|^2 =\trace(B^2) \ge \frac{(\trace B)^2}{n} =\frac{(\Delta u)^2}{n}. \]
Note that $\Delta u=\Delta_{\fm}u +\langle \nabla u,\nabla\psi \rangle$ and,
for any $a,b \in \R$,
\[ \frac{(a+b)^2}{n}
 =\frac{a^2}{N}-\frac{b^2}{N-n}+\frac{N(N-n)}{n}\left( \frac{a}{N}+\frac{b}{N-n} \right)^2
 \ge \frac{a^2}{N}-\frac{b^2}{N-n} \]
(notice that the inequality fails for $N \in (0,n)$).
Applying this inequality to $a=\Delta_{\fm} u$ and $b=\langle \nabla u,\nabla\psi \rangle$
completes the proof.
$\qedd$
\end{proof}

One can readily obtain a generalization of the Lichnerowicz inequality from \eqref{eq:NBW}.

\begin{corollary}[Lichnerowicz inequality]\label{cr:Lich}
Let $M$ be compact and satisfy $\Ric_N \ge K$ for $K>0$ and $N<0$.
Then the first nonzero eigenvalue of the nonnegative operator $-\Delta_{\fm}$
is bounded from below by $KN/(N-1)$.
\end{corollary}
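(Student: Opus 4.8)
The plan is to integrate Bochner's inequality \eqref{eq:NBW} against the weighted measure $\fm$, evaluated on an eigenfunction, and then read off a scalar inequality for the eigenvalue. First I would take $u \in \cC^{\infty}(M)$ to be an eigenfunction of the nonnegative operator $-\Delta_{\fm}$ for the first nonzero eigenvalue $\lambda>0$, so that $\Delta_{\fm}u=-\lambda u$; since $M$ is compact, elliptic regularity ensures $u$ is smooth and \eqref{eq:NBW} applies. Integrating the left-hand side of \eqref{eq:NBW} over $M$ against $\fm$, two simplifications occur on a closed manifold: the term $\int_M \Delta_{\fm}(|\nabla u|^2/2)\,d\fm$ vanishes because $\int_M \Delta_{\fm}h\,d\fm=0$ for every smooth $h$, while integration by parts (the self-adjointness of $\Delta_{\fm}$ with respect to $\fm$) turns $-\int_M\langle\nabla \Delta_{\fm}u,\nabla u\rangle\,d\fm$ into $\int_M(\Delta_{\fm}u)^2\,d\fm$.

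Next I would bound the right-hand side using the hypothesis $\Ric_N\ge K$, which gives $\int_M \Ric_N(\nabla u)\,d\fm \ge K\int_M |\nabla u|^2\,d\fm$. Collecting terms yields
\[ \int_M (\Delta_{\fm}u)^2\,d\fm \ge K\int_M |\nabla u|^2\,d\fm +\frac{1}{N}\int_M (\Delta_{\fm}u)^2\,d\fm. \]
I would then substitute the eigenvalue relations $\int_M(\Delta_{\fm}u)^2\,d\fm=\lambda^2\int_M u^2\,d\fm$ and, by a further integration by parts, $\int_M|\nabla u|^2\,d\fm=-\int_M u\,\Delta_{\fm}u\,d\fm=\lambda\int_M u^2\,d\fm$. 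Dividing by $\int_M u^2\,d\fm>0$ leaves the scalar inequality $\lambda^2 \ge K\lambda+\lambda^2/N$.

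Finally I would solve for $\lambda$, and this is where the case $N<0$ requires the most care. Rearranging gives $\lambda^2\,(N-1)/N \ge K\lambda$, and dividing by $\lambda>0$ gives $\lambda\,(N-1)/N \ge K$. The decisive observation is that for $N<0$ the factor $(N-1)/N=1-1/N$ is strictly positive (in fact greater than $1$), so dividing by it preserves the direction of the inequality and produces $\lambda \ge KN/(N-1)$; since both $N$ and $N-1$ are negative, $N/(N-1)\in(0,1)$, so the bound $KN/(N-1)$ is genuinely positive, as claimed. I expect the \emph{sign-bookkeeping} to be the only real obstacle: unlike the $N\ge n$ case, the signs of $N$ and of $N-1$ govern whether each manipulation reverses the inequality, and one must confirm that the terminal division yields a positive lower bound rather than a vacuous or reversed estimate. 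Everything else is routine integration by parts on a closed manifold.
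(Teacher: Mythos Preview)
Your proof is correct and follows essentially the same route as the paper: integrate Bochner's inequality \eqref{eq:NBW} over the closed manifold, use integration by parts to reduce the left-hand side to $\int_M (\Delta_{\fm}u)^2\,d\fm$, apply $\Ric_N\ge K$, and then plug in the eigenvalue relations to conclude $\lambda \ge KN/(N-1)$. The paper is slightly terser and phrases the intermediate step as $(1-1/N)\int_M(\Delta_{\fm}u)^2\,d\fm \ge K\int_M|\nabla u|^2\,d\fm$, but the argument and the sign check (that $1-1/N>0$ for $N<0$) are the same.
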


\begin{proof}
For any $u \in \cC^{\infty}(M)$,
we deduce from \eqref{eq:NBW} and the integration by parts that
\[ \left( 1-\frac{1}{N} \right) \int_M (\Delta_{\fm} u)^2 \,d\fm
 \ge \int_M \Ric_N(\nabla u) \,d\fm \ge K\int_M |\nabla u|^2 \,d\fm. \]
Hence, for an eigenfunction $u$ with $\Delta_{\fm}u=-\lambda u$, we have
\[ \lambda \int_M |\nabla u|^2 \,d\fm =-\lambda \int_M u\Delta_{\fm}u \,d\fm
 =\int_M (\Delta_{\fm}u)^2 \,d\fm \ge \frac{KN}{N-1} \int_M |\nabla u|^2 \,d\fm. \]
This completes the proof.
$\qedd$
\end{proof}

\begin{remark}[Finsler case]\label{rm:FBW}
The weighted Ricci curvature for Finsler manifolds was introduced in \cite{Oint}
and the analogues of the Lichnerowicz inequality,
Bochner--Weitzenb\"ock formula \eqref{eq:BW} and Bochner's inequality \eqref{eq:NBW}
for $N \in [n,\infty]$ were obtained in \cite{Oint} and \cite{OSbw}
along with gradient estimates as applications
(see also \cite{OShf} for a preceding analytic study of heat flow).
One can similarly extend \eqref{eq:NBW} with $N<0$ to the Finsler setting.
The Bochner--Weitzenb\"ock formula was recently further generalized
to Hamiltonian systems in \cite{Oham} with the help of \cite{Le}.
\end{remark}

\subsection{Original and reduced curvature-dimension conditions}\label{ssc:CD}

The theory of convex functions and the Ricci curvature
are connected by the curvature-dimension condition.
The curvature-dimension condition is a convexity condition of an entropy function
on the space of probability measures,
and characterizes lower Ricci curvature bounds for Riemannian (or Finsler) manifolds.
We shall give the precise definition in the sense of Sturm~\cite{StI}, \cite{StII},
see also \cite[Part~III]{Vi} for background and applications.

Let $(X,d)$ be a complete, separable metric space.
Denote by $\cP(X)$ the set of all Borel probability measures on $X$,
and by $\cP^2(X) \subset \cP(X)$ the subset consisting of measures of finite second moments.
For $\mu,\nu \in \cP^2(X)$, the \emph{$L^2$-Wasserstein distance} is defined by
\[ W_2(\mu,\nu):=\inf_{\pi \in \Pi(\mu,\nu)}
 \left( \int_{X \times X} d(x,y)^2 \,\pi(dxdy) \right)^{1/2}, \]
where $\Pi(\mu,\nu) \subset \cP(X \times X)$ is the set of all couplings of $\mu$ and $\nu$.
A coupling attaining the above infimum is called an \emph{optimal coupling}.

Let us fix a Borel measure $\fm$ on $X$.
For $\mu \in \cP(X)$, we define the (relative) \emph{R\'enyi entropy} with respect to $\fm$ by
\[ S_N(\mu):=\int_X \rho^{(N-1)/N} \,d\fm \]
if $\mu$ is absolutely continuous with respect to $\fm$ ($\mu \ll \fm$),
$S_N(\mu):=\infty$ otherwise.
We suppressed the dependence on $\fm$ for notational simplicity.
The R\'enyi entropy is defined by $-\int_X \rho^{(N-1)/N} \,d\fm$ for $N \ge 1$,
it is natural to drop the minus sign for $N<0$ since the function $h(s)=s^{(N-1)/N}$ is convex.

We modify the function $\sigma^{(t)}_{K/N}$ used to characterize the $(K,N)$-convexity as follows:
\[ \tau^{(t)}_{K,N}(\theta) :=t^{1/N} \sigma^{(t)}_{K/(N-1)}(\theta)^{(N-1)/N}
 =t^{1/N} \left( \frac{\fs_{K/(N-1)}(t\theta)}{\fs_{K/(N-1)}(\theta)} \right)^{(N-1)/N} \]
for $t \in (0,1]$ and $\theta>0$ if $K \ge 0$ and for $\theta \in (0,\pi\sqrt{(N-1)/K})$ if $K<0$.
Set also $\tau^{(0)}_{K,N}(\theta):=0$.
Moreover, when $K<0$, we define for convenience
$\sigma^{(t)}_{K/N}(\theta):=\infty$ if $\theta \ge \pi\sqrt{N/K}$
and accordingly $\tau^{(t)}_{K,N}(\theta):=\infty$ if $\theta \ge \pi\sqrt{(N-1)/K}$.

\begin{definition}[Curvature-dimension condition]\label{df:CD}
Let $K \in \R$ and $N<0$.
A metric measure space $(X,d,\fm)$ is said to satisfy the \emph{curvature-dimension condition}
$\CD(K,N)$ if any pair of absolutely continuous measures $\mu_0=\rho_0 \fm, \mu_1=\rho_1 \fm \in \cP^2(X)$
admits a minimal geodesic $(\mu_t)_{t \in [0,1]} \subset \cP^2(X)$ with respect to $W_2$
and an optimal coupling $\pi \in \Pi(\mu_0,\mu_1)$ such that
\begin{equation}\label{eq:CD}
S_{N'}(\mu_t) \le \int_{X \times X} \left\{
 \tau^{(1-t)}_{K,N'}\big( d(x,y) \big) \rho_0(x)^{-1/N'}
 +\tau^{(t)}_{K,N'}\big( d(x,y) \big) \rho_1(y)^{-1/N'}  \right\} \pi(dxdy)
\end{equation}
holds for all $t \in [0,1]$ and $N' \in [N,0)$.
\end{definition}

We remark that \eqref{eq:CD} becomes trivial if $K<0$ and
\[ \pi \left( \{(x,y) \,|\, d(x,y) \ge \pi\sqrt{(N'-1)/K}\} \right) >0. \]
The following variant along \cite{BS} turns out meaningful.

\begin{definition}[Reduced curvature-dimension condition]\label{df:CD*}
A metric measure space $(X,d,\fm)$ is said to satisfy the \emph{reduced curvature-dimension condition}
$\CD^*(K,N)$ if
\begin{equation}\label{eq:CD*}
S_{N'}(\mu_t) \le \int_{X \times X} \left\{
 \sigma^{(1-t)}_{K/N'}\big( d(x,y) \big) \rho_0(x)^{-1/N'}
 +\sigma^{(t)}_{K/N'}\big( d(x,y) \big) \rho_1(y)^{-1/N'}  \right\} \pi(dxdy)
\end{equation}
holds instead of \eqref{eq:CD} in Definition~\ref{df:CD}.
\end{definition}

For $K=0$, \eqref{eq:CD} and \eqref{eq:CD*} coincide
and induce the convexity of $S_{N'}$:
\[ S_{N'}(\mu_t) \le (1-t)S_{N'}(\mu_0)+tS_{N'}(\mu_1). \]
Letting $N \to -\infty$ (in an appropriate way),
both \eqref{eq:CD} and \eqref{eq:CD*} recover $\CD(K,\infty)$:
\[ \Ent_{\fm}(\mu_t) \le (1-t)\Ent_{\fm}(\mu_0) +t\Ent_{\fm}(\mu_1)
 -\frac{K}{2}(1-t)tW_2(\mu_0,\mu_1)^2, \]
where $\Ent_{\fm}(\mu)$ is the \emph{relative entropy} with respect to $\fm$
defined by
\[ \Ent_{\fm}(\mu):=\int_X \rho\log\rho \,d\fm \]
if $\mu=\rho\fm \ll \fm$ and $\int_{\{\rho>1\}} \rho\log\rho \,d\fm<\infty$,
$\Ent_{\fm}(\mu):=\infty$ otherwise.

\begin{remark}\label{rm:hf}
By pioneering work~\cite{JKO} and more generally \cite{AGShf},
heat flow is regarded as the gradient flow of the relative entropy in the Wasserstein space.
Thus, in \cite{EKS}, an expansion bound of heat flow is obtained from $\CD^e(K,N)$
and implies the \emph{Bakry--Ledoux gradient estimate} via the duality argument.
For $N<0$, however, we have an expansion bound of the gradient flow
of a $(K,N)$-convex function only under the Lipschitz continuity (recall Theorem~\ref{th:exp}),
which is never satisfied by the relative entropy.
\end{remark}

In \cite[Proposition~2.5(i)]{BS}, it is shown that $\CD(K,N)$ implies $\CD^*(K,N)$ for $N \ge 1$.
The analogous property holds true for $N<0$.

\begin{proposition}[$\CD(K,N)$ implies $\CD^*(K,N)$]\label{pr:CD}
If $(X,d,\fm)$ satisfies $\CD(K,N)$ for some $K \in \R$ and $N<0$,
then it also satisfies $\CD^*(K,N)$.
\end{proposition}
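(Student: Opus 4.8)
The plan is to reuse the data supplied by $\CD(K,N)$ and reduce the whole statement to a pointwise comparison of distortion coefficients. Given absolutely continuous $\mu_0=\rho_0\fm$ and $\mu_1=\rho_1\fm$, the hypothesis $\CD(K,N)$ furnishes one minimal geodesic $(\mu_t)$ and one optimal coupling $\pi$ for which \eqref{eq:CD} holds simultaneously for every $N'\in[N,0)$. I would keep this same geodesic and coupling and verify, for each such $N'$, that the $\sigma$-weighted integral in \eqref{eq:CD*} dominates the $\tau$-weighted integral in \eqref{eq:CD}. Since $\rho_0^{-1/N'}$ and $\rho_1^{-1/N'}$ are nonnegative, this reduces to the pointwise inequality
\[ \tau^{(t)}_{K,N'}(\theta) \le \sigma^{(t)}_{K/N'}(\theta) \]
for all $t\in[0,1]$ and $\theta$ in the relevant range (the statement being vacuous where $\sigma^{(t)}_{K/N'}(\theta)=\infty$). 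This is exactly the reverse of the comparison used for $N\ge1$ in \cite[Proposition~2.5(i)]{BS}, consistent with the systematic reversal of inequalities in the $N<0$ theory.

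The core step is thus this coefficient comparison, which I would prove by an ODE/maximum-principle argument. Fix $N'$ and $\theta$ and regard $A(t):=\sigma^{(t)}_{K/N'}(\theta)$ and $B(t):=\tau^{(t)}_{K,N'}(\theta)=t^{1/N'}g(t)^{(N'-1)/N'}$ as functions of $t$, where $g(t):=\sigma^{(t)}_{K/(N'-1)}(\theta)$. Both agree at the endpoints, $A(0)=B(0)=0$ and $A(1)=B(1)=1$; the value $0$ at $t=0$ follows from $g(t)\sim g'(0)t$ as $t\downarrow0$ together with $1/N'+(N'-1)/N'=1$. By construction $A$ solves the model equation $A''+(K/N')\theta^2 A=0$. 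Writing $a:=1/N'$, $b:=(N'-1)/N'$ and $\kappa:=K/(N'-1)$, a logarithmic differentiation of $B=t^{a}g^{b}$, using $g''=-\kappa\theta^2 g$ together with the identity $b\kappa=K/N'$, yields
\[ B''+\frac{K}{N'}\theta^2 B = -ab\left(\frac{1}{t}-\frac{g'}{g}\right)^2 B. \]
Because $ab=(N'-1)/(N')^2<0$ for $N'<0$ and $B>0$, the right-hand side is nonnegative, so $B$ is a subsolution of the same equation that $A$ solves exactly.

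Setting $w:=B-A$, we then have $w''+(K/N')\theta^2 w\ge0$ with $w(0)=w(1)=0$, and I would conclude $w\le0$ from the maximum principle on $[0,1]$. When $K\ge0$ the potential $(K/N')\theta^2$ is nonpositive and the maximum principle applies at once; when $K<0$ the potential is positive, but the admissible range $\theta<\pi\sqrt{N'/K}$ is precisely the condition $(K/N')\theta^2<\pi^2$ that keeps the first Dirichlet eigenvalue of $-d^2/dt^2-(K/N')\theta^2$ positive, so the maximum principle still gives $w\le0$. This establishes $\tau^{(t)}_{K,N'}(\theta)\le\sigma^{(t)}_{K/N'}(\theta)$ and hence $\CD^*(K,N)$. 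I expect the main obstacle to be the bookkeeping in the $K<0$ case: one must reconcile the two coefficient ranges (note $\tau^{(t)}_{K,N'}$ is defined up to $\pi\sqrt{(N'-1)/K}$, which exceeds $\pi\sqrt{N'/K}$), confirm that the comparison is only nontrivial on $\theta<\pi\sqrt{N'/K}$, and check the eigenvalue threshold there; the singular $1/t$ term at $t=0$ also requires the continuity argument noted above.
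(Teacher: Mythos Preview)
Your reduction to the pointwise coefficient inequality $\tau^{(t)}_{K,N'}(\theta)\le\sigma^{(t)}_{K/N'}(\theta)$ is exactly the right move, and your ODE/maximum-principle argument for it is correct. The computation $B''+(K/N')\theta^2 B=-ab(1/t-g'/g)^2 B$ with $ab=(N'-1)/(N')^2<0$ checks out, the boundary values match, and the eigenvalue threshold you invoke for $K<0$ is precisely $(K/N')\theta^2<\pi^2$, i.e.\ $\theta<\pi\sqrt{N'/K}$, which is the range on which $\sigma^{(t)}_{K/N'}(\theta)$ is finite (so the comparison is indeed vacuous beyond it). One small point worth tightening: the standard second-order maximum principle is cleanest here if you use $A$ itself as the positive barrier rather than the abstract first eigenfunction. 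Writing $v:=w/A$ (with $A>0$ on $(0,1]$ in the relevant $\theta$-range) one gets $(v'A^2)'=(w''+cw)A\ge0$; since $B(t)=t\,h(t)^{(N'-1)/N'}$ with $h(t):=g(t)/t$ smooth and positive at $0$, the Wronskian $v'A^2=w'A-wA'$ is $O(t^2)$ and hence tends to $0$ as $t\downarrow0$, so $v'\ge0$, and $v(1)=0$ forces $w\le0$. This avoids any worry about the quotient $w/\phi_1$ at the endpoints.

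The paper reaches the same pointwise inequality by a different, purely algebraic route: it quotes the interpolation inequality for the $\sigma$-coefficients from \cite[Lemma~1.2]{StII}, namely
\[
\sigma^{(t)}_{K/(N'-1)}(\theta)^{1-N'} \le \sigma^{(t)}_0(\theta)\,\sigma^{(t)}_{K/N'}(\theta)^{-N'}=t\,\sigma^{(t)}_{K/N'}(\theta)^{-N'},
\]
which after taking $(\cdot)^{1/N'}$ gives $\tau^{(t)}_{K,N'}(\theta)\le\sigma^{(t)}_{K/N'}(\theta)$ in one line. Your approach is more self-contained and makes the Riccati mechanism behind the inequality transparent; the paper's approach is shorter and needs no case distinction or eigenvalue discussion once the external lemma is in hand.
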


\begin{proof}
It is sufficient to prove that \eqref{eq:CD} implies \eqref{eq:CD*}
by comparing the coefficient functions $\tau^{(t)}_{K,N'}$ and $\sigma^{(t)}_{K/N'}$
(notice that $N/K<(N-1)/K$ if $K<0$).
For $\theta$ in the domain of $\sigma^{(t)}_{K/N}$ and $N' \in [N,0)$,
we deduce from \cite[Lemma~1.2]{StII} that
\[ \sigma^{(t)}_{(-K)/(1-N')}(\theta)^{1-N'}
 \le \sigma^{(t)}_0(\theta) \sigma^{(t)}_{(-K)/(-N')}(\theta)^{-N'}
 =t\sigma^{(t)}_{K/N'}(\theta)^{-N'}. \]
Hence we have
\[ \tau^{(t)}_{K,N'}(\theta)^{N'} =t\sigma^{(t)}_{K/(N'-1)}(\theta)^{N'-1}
 \ge \sigma^{(t)}_{K/N'}(\theta)^{N'}. \]
Therefore $\tau^{(t)}_{K,N'}(\theta) \le \sigma^{(t)}_{K/N'}(\theta)$
and \eqref{eq:CD} implies \eqref{eq:CD*}.
$\qedd$
\end{proof}

Before discussing the relation with the Ricci curvature,
we give a geometric application of the curvature-dimension condition.

\begin{theorem}[Brunn--Minkowski inequality]\label{th:BM}
Let $(X,d,\fm)$ satisfy $\CD(K,N)$ with $K \in \R$ and $N<0$.
Then, for any measurable sets $A_0,A_1 \subset X$ with
$\diam(A_0 \cup A_1)<\pi\sqrt{(N-1)/K}$ if $K<0$, we have
\begin{equation}\label{eq:BM}
\fm[A_t]^{1/N} \le \sup_{x \in A_0,\, y \in A_1} \tau^{(1-t)}_{K,N}\big( d(x,y) \big) \fm[A_0]^{1/N}
 +\sup_{x \in A_0,\, y \in A_1} \tau^{(t)}_{K,N}\big( d(x,y) \big) \fm[A_1]^{1/N}
\end{equation}
for any $t \in [0,1]$, where $A_t$ is the set consisting of $\gamma(t)$ for minimal geodesics
$\gamma:[0,1] \lra X$ satisfying $\gamma(0) \in A_0$ and $\gamma(1) \in A_1$.

Similarly, if $(X,d,\fm)$ satisfies $\CD^*(K,N)$, then we have
\begin{equation}\label{eq:BM*}
\fm[A_t]^{1/N} \le \sup_{x \in A_0,\, y \in A_1} \sigma^{(1-t)}_{K/N}\big( d(x,y) \big) \fm[A_0]^{1/N}
 +\sup_{x \in A_0,\, y \in A_1} \sigma^{(t)}_{K/N}\big( d(x,y) \big) \fm[A_1]^{1/N}
\end{equation}
for $A_0,A_1 \subset X$ with $\diam(A_0 \cup A_1)<\pi\sqrt{N/K}$ if $K<0$.
\end{theorem}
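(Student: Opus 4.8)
The plan is to test the curvature-dimension inequality \eqref{eq:CD} against the normalized uniform distributions on $A_0$ and $A_1$, and then to convert the resulting bound on the R\'enyi entropy $S_N$ into a bound on $\fm[A_t]$ by Jensen's inequality. I may assume $0<\fm[A_0],\fm[A_1]<\infty$, since otherwise $\fm[A_i]=0$ forces $\fm[A_i]^{1/N}=+\infty$ (recall $N<0$) and the right-hand side of \eqref{eq:BM} is infinite for the relevant $t$; by a standard truncation I may further assume $A_0,A_1$ bounded, so that
\[ \mu_0:=\frac{\mathbf{1}_{A_0}}{\fm[A_0]}\,\fm,\qquad
 \mu_1:=\frac{\mathbf{1}_{A_1}}{\fm[A_1]}\,\fm \]
belong to $\cP^2(X)$. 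Applying $\CD(K,N)$ with $N'=N$ produces a minimal geodesic $(\mu_t)_{t\in[0,1]}$ and an optimal coupling $\pi\in\Pi(\mu_0,\mu_1)$ for which \eqref{eq:CD} holds.

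First I would estimate the right-hand side of \eqref{eq:CD}. Since $\rho_0\equiv\fm[A_0]^{-1}$ on $A_0$ and $\rho_1\equiv\fm[A_1]^{-1}$ on $A_1$, the densities enter only through the constants $\rho_0(x)^{-1/N}=\fm[A_0]^{1/N}$ and $\rho_1(y)^{-1/N}=\fm[A_1]^{1/N}$. As $\pi$ is a probability measure concentrated on $A_0\times A_1$, and the diameter hypothesis keeps $d(x,y)$ inside the domain of $\tau^{(t)}_{K,N}$, bounding each coefficient by its supremum over $A_0\times A_1$ and integrating gives
\[ S_N(\mu_t)\le
 \sup_{x\in A_0,\,y\in A_1}\tau^{(1-t)}_{K,N}\bigl(d(x,y)\bigr)\,\fm[A_0]^{1/N}
 +\sup_{x\in A_0,\,y\in A_1}\tau^{(t)}_{K,N}\bigl(d(x,y)\bigr)\,\fm[A_1]^{1/N}, \]
which is precisely the right-hand side of \eqref{eq:BM}.

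Next I would bound $S_N(\mu_t)$ from below by $\fm[A_t]^{1/N}$. Because $(\mu_t)$ is the displacement interpolation associated with $\pi$, it is concentrated on $A_t$, and the finiteness of the right-hand side forces $\mu_t\ll\fm$, say $\mu_t=\rho_t\fm$. Setting $B:=\{\rho_t>0\}$, one has $B\subseteq A_t$ up to an $\fm$-null set and $\int_B\rho_t\,d\fm=1$. Since $N<0$ the exponent $(N-1)/N=1-1/N>1$, so $h(s):=s^{(N-1)/N}$ is convex; Jensen's inequality with the probability measure $\fm[B]^{-1}\fm|_B$ yields
\[ \fm[B]^{-1}S_N(\mu_t)
 =\fm[B]^{-1}\!\int_B\rho_t^{(N-1)/N}\,d\fm
 \ge\bigl(\fm[B]^{-1}\bigr)^{(N-1)/N}, \]
whence $S_N(\mu_t)\ge\fm[B]^{1/N}$. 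As $\fm[B]\le\fm[A_t]$ and $s\mapsto s^{1/N}$ is decreasing, this gives $\fm[A_t]^{1/N}\le S_N(\mu_t)$, and combining with the previous display proves \eqref{eq:BM}. The inequality \eqref{eq:BM*} follows verbatim, using $\CD^*(K,N)$ and \eqref{eq:CD*} with $\sigma^{(t)}_{K/N}$ in place of $\tau^{(t)}_{K,N}$ (and the correspondingly larger range $d<\pi\sqrt{N/K}$).

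The hard part will be the reduction guaranteeing that $\mu_t$ is concentrated on $A_t$: one must know that the geodesic furnished by the curvature-dimension condition is a genuine displacement interpolation transporting mass along minimal geodesics joining $A_0$ to $A_1$, and one must sidestep the possible non-measurability of $A_t$ itself, which I handle above by passing to the measurable set $B=\{\rho_t>0\}\subseteq A_t$ and the estimate $\fm[B]\le\fm[A_t]$. I would also flag the bookkeeping peculiar to $N<0$: compared with the $N\ge1$ case both the convexity of $h$ and the sign convention in $S_N$ are reversed, and these two reversals cancel, so Jensen's inequality still delivers the lower bound $S_N(\mu_t)\ge\fm[A_t]^{1/N}$ in the direction required by \eqref{eq:BM}.
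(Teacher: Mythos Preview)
Your argument is correct and follows essentially the same route as the paper: test \eqref{eq:CD} with the uniform measures $\mu_i=\fm[A_i]^{-1}\fm|_{A_i}$, bound the coefficients by their suprema, and use Jensen's inequality to obtain $S_N(\mu_t)\ge\fm[A_t]^{1/N}$. The only cosmetic difference is in the Jensen step: the paper writes $S_N(\mu_t)=\int_{\supp\mu_t}\rho_t^{-1/N}\,d\mu_t$ and applies Jensen with respect to the probability measure $\mu_t$ and the convex map $s\mapsto s^{1/N}$, obtaining $S_N(\mu_t)\ge\bigl(\int\rho_t^{-1}\,d\mu_t\bigr)^{1/N}=\fm[\supp\mu_t]^{1/N}\ge\fm[A_t]^{1/N}$, whereas you apply Jensen with respect to $\fm[B]^{-1}\fm|_B$ and the convex map $s\mapsto s^{(N-1)/N}$; both yield the same bound. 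Your explicit flagging of the displacement-interpolation issue (that the Wasserstein geodesic $(\mu_t)$ is supported on $A_t$) is a fair point which the paper leaves implicit.
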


\begin{proof}
As the proofs are completely the same, we consider only \eqref{eq:BM}.
There is nothing to prove if $\fm[A_0]=0$ or $\fm[A_1]=0$.
If $0<\fm[A_0], \fm[A_1]<\infty$, then combining \eqref{eq:CD} for
$\mu_i=\fm[A_i]^{-1} \cdot \fm|_{A_i}$ ($i=0,1$) and
\[ S_N(\mu_t) =\int_{\supp \mu_t} \rho_t^{-1/N} \,d\mu_t
 \ge \left( \int_{\supp \mu_t} \rho_t^{-1} \,d\mu_t \right)^{1/N}
 =\fm[\supp \mu_t]^{1/N} \ge \fm[A_t]^{1/N} \]
by Jensen's inequality yields \eqref{eq:BM}.
This is enough to conclude also in the case where $\fm[A_0]=\infty$ or $\fm[A_1]=\infty$
by choosing increasing subsets of $A_0$ or $A_1$ and taking the limit of \eqref{eq:BM}.
$\qedd$
\end{proof}

Observe that \eqref{eq:BM} is a lower bound of $\fm[A_t]$ since $N<0$.
On the Euclidean space $\R^n$ equipped with the standard metric,
we take $K=0$ and \eqref{eq:BM} coincides with the \emph{$1/N$-concavity}
of the measure $\fm=\e^{-\psi}\fL^n$ ($\fL^n$ is the Lebesgue measure):
\[ \fm[A_t]^{1/N} \le (1-t)\fm[A_0]^{1/N} +t\fm[A_1]^{1/N}, \]
which is equivalent to the \emph{$p$-concavity} of the function $w=\e^{-\psi}$:
\[ w\big( (1-t)x+ty \big)^p \le (1-t)w(x)^p +tw(y)^p \]
with $p=1/(N-n)$ (see \cite{Bo}, \cite{BrLi}, and \cite[Theorem~1.1]{MR}).
Indeed, when $\psi \in C^2(\R^n)$, the $p$-concavity can be rewritten by calculation
into the weighted Ricci curvature bound:
\[ \Hess\psi -\frac{\nabla\psi \otimes \nabla\psi}{N-n} \ge 0. \]

\begin{remark}\label{rm:BM}
For $N \ge 1$, the Brunn--Minkowski inequality \eqref{eq:BM} implies
the \emph{Bishop--Gromov type volume growth bound}:
\[ \frac{\fm[B(x,R')]}{\fm[B(x,R)]}
 \le \frac{\int_0^{R'} \fs_{K/(N-1)}(r)^{N-1} \,dr}{\int_0^{R} \fs_{K/(N-1)}(r)^{N-1} \,dr} \]
for $0<R \le R'$ ($\le \pi\sqrt{(N-1)/K}$ if $K>0$),
where $B(x,R)$ is the open ball with center $x$ and radius $R$.
This is done by choosing $A_0=\{x\}$, $A_1=B(x,R')$ and $t=R/R'$.
For $N<0$, however, a similar bound can not be expected since $\fm[\{x\}]^{1/N}=\infty$.
For the same reasoning, the \emph{measure contraction property}
does not have a version of $N<0$ (see \cite{Omcp}, \cite[\S 5]{StII}).
\end{remark}

Although $\CD^*(K,N)$ is weaker than $\CD(K,N)$ by calculation,
they are equivalent infinitesimally and characterize a lower Ricci curvature bound
for Riemannian manifolds similarly to the $N \ge 1$ case.

\begin{theorem}\label{th:CD}
Let $(M,g)$ be an $n$-dimensional Riemannian manifold
and fix a measure $\fm=\e^{-\psi}\vol_g$ with $\psi \in \cC^{\infty}(M)$.
Then, given $K \in \R$ and $N<0$, the following are equivalent$:$
\begin{enumerate}[{\rm (I)}]
\item $\Ric_N \ge K$ holds in the sense that $\Ric_N(v) \ge K|v|^2$ for all $v \in TM$.
\item $(M,d_g,\fm)$ satisfies $\CD(K,N)$.
\item $(M,d_g,\fm)$ satisfies $\CD^*(K,N)$.
\end{enumerate}
\end{theorem}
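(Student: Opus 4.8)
The plan is to prove the cyclic chain (I) $\Rightarrow$ (II) $\Rightarrow$ (III) $\Rightarrow$ (I); since (II) $\Rightarrow$ (III) is exactly Proposition~\ref{pr:CD}, the real content lies in the geometric implication (I) $\Rightarrow$ (II) and the infinitesimal converse (III) $\Rightarrow$ (I). Both rest on a single pointwise estimate along optimal transport. By McCann's theorem, any absolutely continuous $\mu_0=\rho_0\fm,\mu_1=\rho_1\fm \in \cP^2(M)$ are joined by a deterministic optimal coupling induced by $T_t(x)=\exp_x(-t\nabla\varphi(x))$ for a $d_g^2/2$-concave potential $\varphi$, with $(\mu_t)_t$ a minimal $W_2$-geodesic. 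Writing $\mathcal{J}_t(x):=\e^{\psi(x)-\psi(T_t(x))}\det(d_xT_t)$ for the Jacobian of $T_t$ relative to $\fm$, the Monge--Amp\`ere identity $\rho_t(T_t(x))\mathcal{J}_t(x)=\rho_0(x)$ and a change of variables give
\[ S_N(\mu_t)=\int_M \rho_0(x)^{-1/N}\,\mathcal{J}_t(x)^{1/N}\,d\mu_0(x), \]
where $\mathcal{J}_0\equiv 1$ and $\rho_1(T_1(x))^{-1/N}=\rho_0(x)^{-1/N}\mathcal{J}_1(x)^{1/N}$. Hence, for each fixed $x$ and $d:=d_g(x,T_1(x))$, both \eqref{eq:CD} and \eqref{eq:CD*} reduce to a one-dimensional comparison for $y(t):=\mathcal{J}_t(x)^{1/N}$ against the coefficient functions $\tau^{(\cdot)}_{K,N}(d)$ and $\sigma^{(\cdot)}_{K/N}(d)$.

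For (I) $\Rightarrow$ (II) I would first establish the differential inequality obeyed by $y$. Differentiating $\log\mathcal{J}_t$ twice along the constant-speed geodesic $\gamma(t)=T_t(x)$, feeding in the matrix Riccati equation for the associated shape operator $U$ together with $\trace(U^2)\ge(\trace U)^2/n$, and substituting the definition of $\Ric_N$, the cross terms assemble into a perfect square: with $a=\trace U$ and $b=\langle\nabla\psi,\dot\gamma\rangle$ one obtains a nonnegative multiple (since $n-N>0$) of $(a-\frac{n}{n-N}b)^2$. Because $-1/N>0$ for $N<0$, the bound $\Ric_N\ge K$ then yields $y''+\frac{K}{N}d^2\,y\ge 0$, which is the reverse of the $N>0$ inequality. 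Comparing $y$ with the solution $h$ of $h''+\frac{K}{N}d^2 h=0$ satisfying $h(0)=y(0)=1$ and $h(1)=y(1)$ gives $y\le h$, i.e.\ \eqref{eq:CD*}, by the Sturm--Liouville maximum principle; this is unconditional when $K\ge 0$ and, when $K<0$, valid precisely while $\frac{K}{N}d^2$ stays below the first Dirichlet eigenvalue $\pi^2$ on $(0,1)$, that is, $d<\pi\sqrt{N/K}$. To sharpen $\sigma$ to the $\tau$-coefficients of \eqref{eq:CD} I split off the radial Jacobi field along $\dot\gamma$: it is linear and contributes the factor $t^{1/N}$, reducing the transverse comparison to effective dimension $N-1$ and producing $\sigma^{(\cdot)}_{K/(N-1)}(d)^{(N-1)/N}$, hence exactly $\tau^{(\cdot)}_{K,N}(d)$ with the range $d<\pi\sqrt{(N-1)/K}$. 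This is Sturm's decomposition with every comparison inequality reversed.

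For the converse (III) $\Rightarrow$ (I) I localize, and here the chief difficulty is that for $N<0$ volume comparisons degenerate (cf.\ Remark~\ref{rm:BM}), so \eqref{eq:CD*} cannot be tested against point masses or balls. Instead, fixing $x_0\in M$ and $v\in T_{x_0}M$, I take $\mu_0,\mu_1$ with smooth densities supported in a shrinking ball $B(x_0,\ve)$, arranged in normal coordinates so that the optimal map is to leading order the translation by $\ve v$, and, crucially, I prescribe a nonzero $\nabla\log\rho_0(x_0)$ so that the coupling between $\nabla\log\rho_0$ and $\nabla\psi$ is not lost. Expanding \eqref{eq:CD*} at $t=1/2$ to order $\ve^2$ and reading the reduction of the previous paragraph backwards, the second-order coefficient reproduces the pointwise Bochner expression; optimizing over the prescribed density gradient recovers the full tensor, including the term $-\langle\nabla\psi,v\rangle^2/(N-n)$, and forces $\Ric_N(v)\ge K|v|^2$. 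This is the infinitesimal argument of von Renesse--Sturm and Sturm adapted to negative $N$.

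I expect the main obstacle to be twofold and concentrated in the two nontrivial arrows: in (I) $\Rightarrow$ (II), carrying out the radial/transverse splitting while correctly tracking the sign reversals, the effective dimension $N-1$, and its conjugate-point threshold $\pi\sqrt{(N-1)/K}$; and in (III) $\Rightarrow$ (I), engineering test measures whose second-order transport expansion detects the entire weighted Ricci tensor even though the usual ball and volume test objects are unavailable for $N<0$.
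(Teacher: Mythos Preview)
Your cyclic chain and the argument for (I)\,$\Rightarrow$\,(II) match the paper's. The paper also carries out the radial/transverse splitting---separating the $e_n=v/|v|$ direction, whose contribution $\e^\beta$ is concave (your ``linear'' should read ``compared against linear''), from the remaining $(n-1)$ directions---obtains a $\sigma_{K/(N-1)}$-comparison for the transverse-plus-weight factor $h_2$, and then recombines via the reverse H\"older inequality to reach the $\tau^{(t)}_{K,N}$ coefficients. Your ordering (first the full $\sigma_{K/N}$-inequality for $y=\mathcal J_t^{1/N}$, then the refinement) is a harmless reshuffle of the same computation.

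For (III)\,$\Rightarrow$\,(I) you diverge from the paper, and your stated reason rests on a misreading of Remark~\ref{rm:BM}. That remark only says that one endpoint cannot be a \emph{point} (so Bishop--Gromov fails); the Brunn--Minkowski inequality~\eqref{eq:BM*} with two sets of positive measure is perfectly available under $\CD^*(K,N)$. The paper applies~\eqref{eq:BM*} at $t=1/2$ to the balls
\[ A_0=B\big(\gamma(-r),\ve(1+ar)\big),\qquad A_1=B\big(\gamma(r),\ve(1-ar)\big),\qquad a=\frac{\langle\nabla\psi,v\rangle}{N-n}, \]
and expands $\fm[A_0],\fm[A_1],\fm[A_{1/2}]$ asymptotically in $\ve\ll r$. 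The tuned radii, through the parameter $a$, play exactly the role of your prescribed $\nabla\log\rho_0$: they guarantee that the cross term with $\nabla\psi$ survives the expansion and reproduces $-\langle\nabla\psi,v\rangle^2/(N-n)$. Your smooth-test-measure expansion is a legitimate alternative in the spirit of von Renesse--Sturm, but the paper's route via Brunn--Minkowski is shorter here since Theorem~\ref{th:BM} is already in hand.
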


\begin{proof}
The proof is along the same line as the case of $N \in [n,\infty]$,
thus we give only a sketch.
We refer to \cite[\S 8.2]{Oint} and \cite[Proposition~5.5]{BS} for detailed calculations.

(I) $\Rightarrow$ (II):
In the present situation, there is a unique minimal geodesic
$(\mu_t)_{t \in [0,1]} \subset \cP^2(M)$ written as
\[ \mu_t=\rho_t \fm=(\bT_t)_{\sharp}\mu_0, \qquad
 \bT_t(x):=\exp\big( t\nabla\varphi(x) \big) \]
for some $\mu_0$-almost everywhere twice differentiable function $\varphi$,
where $(\bT_t)_{\sharp}\mu_0$ denotes the push-forward of $\mu_0$ by the map $\bT_t$
(see \cite{FG}).
An optimal coupling is also unique and given by $\pi=(\id_M \times \bT_1)_{\sharp}\mu_0$.

Fix $x \in M$ at where $\rho_0(x)>0$, $\varphi$ is twice differentiable
and $\nabla\varphi(x) \neq 0$.
Put $v:=\nabla\varphi(x)$ and $\gamma(t):=\bT_t(x)=\exp(tv)$ for brevity.
Let
\[ \bJ_t(x):=\e^{\psi(x)-\psi(\bT_t(x))} \det[d\bT_t(x)] \]
be the Jacobian of $\bT_t$ with respect to the measure $\fm$.
Then the \emph{Jacobian equation} (or the \emph{Monge--Amper\`e equation})
\begin{equation}\label{eq:MA}
\rho_0(x)=\rho_t\big( \bT_t(x) \big) \bJ_t(x)
\end{equation}
holds.
We take an orthonormal basis $\{e_i\}_{i=1}^n$ of $T_xM$
such that $e_n=v/|v|$ and extend it to the Jacobi fields
$E_i(t):=(d\bT_t)_x(e_i) \in T_{\gamma(t)}M$.
Consider the $n \times n$ matrices $A(t)=(a_{ij}(t))$ and $B(t)=(b_{ij}(t))$ defined by
\[ a_{ij}(t):=\langle E_i(t),E_j(t) \rangle, \qquad
 \nabla_t E_i(t) =\sum_{j=1}^n b_{ij}(t) E_j(t). \]
Note that $\det[d\bT_t(x)]=\sqrt{\det[A(t)]}$ and
$B(t)$ is a symmetric matrix
(see, for example, \cite[(c) in p.~368]{Vi}, \cite[\S 3.1]{OSbw}).
By virtue of the Riccati equation $B'=-R A^{-1} -B^2$
with $R:=(\langle R(E_i,\dot{\gamma})\dot{\gamma},E_j \rangle)$,
we obtain $b'_{nn}=-\sum_{i=1}^n b_{in}^2 \le -b_{nn}^2$.
Hence, when we put $\beta(t):=1+\int_0^t b_{nn} \,ds$,
$\e^{\beta}$ is concave in $t$ and thus
\begin{equation}\label{eq:beta}
\e^{\beta(t)} \ge (1-t)\e^{\beta(0)} +t\e^{\beta(1)}
\end{equation}
holds.
Now we consider the functions
\[ \Phi(t):=\log\left[ \sqrt{\det[A(t)]} \right], \qquad \alpha:=\Phi-\beta \]
and observe from $\Phi'=\trace B$ that
\[ \alpha'' \le -\Ric(\dot{\gamma}) -\frac{(\alpha')^2}{n-1}. \]
Therefore we find $[\e^{\alpha/(n-1)}]'' \e^{-\alpha/(n-1)} \le -\Ric(\dot{\gamma})/(n-1)$.
Hence, by setting
\[ h(t):=\{ \e^{-\psi(x)} \bJ_t(x) \}^{1/N},\quad
 h_1(t):=\e^{-\psi(\gamma(t))/(N-n)},\quad
 h_2:=h_1^{(N-n)/(N-1)} \e^{\alpha/(N-1)}, \]
we have
\[ (N-1)h_2^{-1} h''_2
 \le (N-n)h_1^{-1} h''_1 +(n-1)\e^{-\alpha/(n-1)} [\e^{\alpha/(n-1)}]''
 \le -\Ric_N(\dot{\gamma}) \]
(we remark that the first inequality does not hold if $N \in (1,n)$).
This shows that the function
\[ \frac{h_2(t)-\fc_{K/(N-1)}(t|v|) h_2(0)}{\fs_{K/(N-1)}(t|v|)} \]
is non-decreasing in $t$.
Thus we have
\[ h_2(t) \le \frac{\fs_{K/(N-1)}((1-t)|v|)}{\fs_{K/(N-1)}(|v|)}h_2(0)
 +\frac{\fs_{K/(N-1)}(t|v|)}{\fs_{K/(N-1)}(|v|)}h_2(1). \]
Together with \eqref{eq:beta} and the (reverse) H\"older inequality (see \cite[Claim~4.2]{OT1}),
this yields
\[ h(t)=h_2(t)^{(N-1)/N} (\e^{\beta(t)})^{1/N}
 \le \tau^{(1-t)}_{K,N}(|v|) h(0) +\tau^{(t)}_{K,N}(|v|) h(1), \]
which is equivalent to the convexity of the $(1/N)$-th power of Jacobian:
\begin{equation}\label{eq:Jac}
\bJ_t(x)^{1/N} \le \tau^{(1-t)}_{K,N}(|v|) +\tau^{(t)}_{K,N}(|v|) \bJ_1(x)^{1/N}.
\end{equation}

Integrating this infinitesimal inequality \eqref{eq:Jac} immediately gives \eqref{eq:CD} with $N'=N$.
Precisely, by virtue of the Jacobian equation \eqref{eq:MA}, we obtain
\begin{align*}
S_N(\mu_t) &=\int_M (\rho_t \circ \bT_t)^{(N-1)/N} \bJ_t \,d\fm
 =\int_M \left( \frac{\bJ_t}{\rho_0} \right)^{1/N} \,d\mu_0 \\
&\le \int_M \left\{ \frac{\tau^{(1-t)}_{K,N}(d_g(x,\bT_1(x)))}{\rho_0(x)^{1/N}}
 +\frac{\tau^{(t)}_{K,N}(d_g(x,\bT_1(x)))}{\rho_1(\bT_1(x))^{1/N}} \right\} \mu_0(dx) \\
&= \int_{M \times M} \left\{ \frac{\tau^{(1-t)}_{K,N}(d_g(x,y))}{\rho_0(x)^{1/N}}
 +\frac{\tau^{(t)}_{K,N}(d_g(x,y))}{\rho_1(y)^{1/N}} \right\} \pi(dxdy).
\end{align*}
This completes the proof since $\Ric_{N'} \ge \Ric_N$ for $N' \in [N,0)$.

(II) $\Rightarrow$ (III):
This was shown in Proposition~\ref{pr:CD}.

(III) $\Rightarrow$ (I):
Fix a unit vector $v \in T_xM$ and let $\gamma:(-\delta,\delta) \lra M$
be the geodesic with $\dot{\gamma}(0)=v$.
Put $a=\langle \nabla\psi,v \rangle/(N-n)$ and consider the open balls
\[ A_0:=B\big( \gamma(-r),\ve(1+ar) \big), \qquad
 A_1:=B\big( \gamma(r),\ve(1-ar) \big) \]
for $0<\ve \ll r \ll \delta$.
It follows from \eqref{eq:BM*} with $t=1/2$ that
\[ \fm[A_{1/2}]^{1/N} \le \sigma^{(1/2)}_{K/N}\big( 2r+O(\ve) \big)
 \left( \fm[A_0]^{1/N} +\fm[A_1]^{1/N} \right). \]
Observe that (see \cite[Proposition~5.5]{BS})
\[ \sigma^{(1/2)}_{K/N}(2r) =\frac{1}{2}+\frac{K}{4N}r^2 +O(r^4). \]
By combining this with the asymptotic behaviors of $\fm[A_0]$, $\fm[A_1]$ and $\fm[A_{1/2}]$
in terms of the weight function $\psi$ and the Ricci curvature,
we can conclude $\Ric_N(v) \ge K$.
$\qedd$
\end{proof}

This characterization is generalized to Finsler manifolds verbatim,
see \cite{Oint} for the case of $N \in [n,\infty]$.

\begin{remark}[Lott and Villani's version of $\CD(K,N)$]\label{rm:LVCD}
From the infinitesimal inequality \eqref{eq:Jac}, we further obtain
Lott and Villani's version of the curvature-dimension condition studied
in \cite{LV1}, \cite{LV2} independently of Sturm's work.
Their version extends the class of entropies to the ones induced from functions
in \emph{displacement convexity classes} $\DC_N$.
For $N \ge 1$, McCann~\cite{Mc1} introduced $\DC_N$
as the set of all continuous convex functions $u:[0,\infty) \lra \R$ such that $u(0)=0$
and that $\phi(s):=s^N u(s^{-N})$ is convex on $(0,\infty)$.
We adopted the same definition for $N<0$ in \cite{OT2},
then Lott and Villani's version of $\CD(K,N)$ means that
\begin{align*}
\int_M u(\rho_t) \,d\fm
&\le (1-t) \int_{M \times M} \frac{\beta^{(1-t)}_{K,N}(d_g(x,y))}{\rho_0(x)}
 u\left( \frac{\rho_0(x)}{\beta^{(1-t)}_{K,N}(d_g(x,y))} \right) \pi(dxdy) \\
&\quad +t\int_{M \times M} \frac{\beta^{(t)}_{K,N}(d_g(x,y))}{\rho_1(x)}
 u\left( \frac{\rho_1(x)}{\beta^{(t)}_{K,N}(d_g(x,y))} \right) \pi(dxdy),
\end{align*}
where $\beta^{(t)}_{K,N}(\theta):=\{ \tau^{(t)}_{K,N}(\theta)/t \}^N$.
This follows from \eqref{eq:Jac} and the non-decreasing property of $\phi$ (see \cite[Lemma~3.2]{OT2}).
Choosing $u(r)=Nr(1-r^{-1/N})$ recovers \eqref{eq:CD}.
\end{remark}

An estimate similar to the proof of Theorem~\ref{th:BW}
gives the following examples of $\CD(K,N)$-spaces.
Compare Corollary~\ref{cr:weit} with \cite[Proposition~3.3]{EKS} and Lemma~\ref{lm:sum}.

\begin{corollary}[Weighted spaces]\label{cr:weit}
Let $K_1,K_2 \in \R$, $N_2 \ge n$ and $N_1<-N_2$.
If an $n$-dimensional weighted Riemannian manifold $(M,d_g,\fm)$ satisfies $\CD(K_2,N_2)$
and $\Psi \in \cC^2(M)$ is $(K_1,N_1)$-convex,
then $(M,d_g,\e^{-\Psi}\fm)$ satisfies $\CD(K_1+K_2,N_1+N_2)$.
\end{corollary}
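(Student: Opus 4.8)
The plan is to reduce the statement to a pointwise comparison of weighted Ricci curvatures and then invoke Theorem~\ref{th:CD} twice. Write $\fm=\e^{-\psi}\vol_g$; then the new reference measure is $\e^{-\Psi}\fm=\e^{-(\psi+\Psi)}\vol_g$, so the manifold and its dimension $n$ are unchanged and only the weight becomes $\psi+\Psi$. Since $N_2\ge n$, the hypothesis $\CD(K_2,N_2)$ is equivalent to $\Ric_{N_2}\ge K_2$ by the classical equivalence (the $N\in[n,\infty]$ counterpart of Theorem~\ref{th:CD}). Because $N_1<-N_2$ forces $N_1+N_2<0$, Theorem~\ref{th:CD} itself applies to the target parameter $N_1+N_2$, so it suffices to prove that the weighted Ricci curvature of $\e^{-(\psi+\Psi)}\vol_g$ with parameter $N_1+N_2$ is bounded below by $K_1+K_2$, i.e.
\[
\Ric(v)+\Hess(\psi+\Psi)(v,v)-\frac{\langle\nabla(\psi+\Psi),v\rangle^2}{(N_1+N_2)-n}\ge (K_1+K_2)|v|^2
\]
for every $v\in TM$.

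The computation is then an estimate of the same flavour as in the proof of Theorem~\ref{th:BW}. Abbreviate $a:=\langle\nabla\psi,v\rangle$ and $b:=\langle\nabla\Psi,v\rangle$, and split the left-hand side as the sum of the two terms
\[
\Ric(v)+\Hess\psi(v,v)-\frac{a^2}{N_2-n},\qquad \Hess\Psi(v,v)-\frac{b^2}{N_1},
\]
together with the remainder
\[
R:=\frac{a^2}{N_2-n}+\frac{b^2}{N_1}-\frac{(a+b)^2}{N_1+N_2-n}.
\]
The first term equals $\Ric_{N_2}(v)$ and is $\ge K_2|v|^2$; the second is exactly the $(K_1,N_1)$-convexity inequality \eqref{eq:KN} for $\Psi$ and is $\ge K_1|v|^2$. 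The whole matter thus comes down to showing $R\ge0$, for which I would use the algebraic identity
\[
R=\frac{N_1(N_2-n)}{N_1+N_2-n}\left(\frac{a}{N_2-n}-\frac{b}{N_1}\right)^2 .
\]

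The heart of the argument, and the step requiring genuine care, is the sign of the coefficient in this identity, which is precisely where the range hypotheses enter. Under $N_2>n$ and $N_1<-N_2$ one has $N_2-n>0$, $N_1<0$, and $N_1+N_2-n<0$ (as $N_1+N_2<0<n$); hence $N_1(N_2-n)<0$ and $N_1+N_2-n<0$, so the coefficient is nonnegative and $R\ge0$. This is the exact analogue of the inequality used for Theorem~\ref{th:BW} (``$(a+b)^2/n\ge a^2/N-b^2/(N-n)$''), and the necessity of these sign conditions mirrors the careful bookkeeping of ranges in Lemma~\ref{lm:sum}. The degenerate case $N_2=n$ has to be treated separately: there $\Ric_n\ge K_2$ is finite only when $\langle\nabla\psi,v\rangle=0$, which forces $\psi$ to be constant and $a=0$, so the remainder term disappears and the bound follows by simply adding $\Ric_n(v)\ge K_2|v|^2$ to the $(K_1,N_1)$-convexity of $\Psi$. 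Combining the three pieces yields the desired lower bound on the weighted Ricci curvature with parameter $N_1+N_2$, and Theorem~\ref{th:CD} then delivers $\CD(K_1+K_2,N_1+N_2)$.
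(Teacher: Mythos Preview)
Your proof is correct and follows essentially the same route as the paper: reduce to $\overline{\Ric}_{N_1+N_2}\ge K_1+K_2$ via Theorem~\ref{th:CD}, split off $\Ric_{N_2}(v)$ and the $(K_1,N_1)$-convexity term for $\Psi$, and then check the algebraic remainder $\frac{a^2}{N_2-n}+\frac{b^2}{N_1}-\frac{(a+b)^2}{N_1+N_2-n}\ge 0$. You make explicit what the paper leaves implicit---the identity for $R$, the sign analysis of the coefficient, and the separate treatment of the degenerate case $N_2=n$ (where the paper just remarks that $\psi$ must be constant)---but the argument is the same.
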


\begin{proof}
Put $\fm=\e^{-\psi}\vol_g$, $K=K_1+K_2$ and $N=N_1+N_2$.
We remark that $N_2=n$ only if $\psi$ is constant.
The weighted Ricci curvature $\overline{\Ric}_N(v)$ with respect to the measure $\e^{-\Psi}\fm$
is bounded by using $\Ric_{N_2}(v)$ for $\fm$ as
\begin{align*}
\overline{\Ric}_N(v)
&= \Ric_{N_2}(v) +\Hess\Psi(v,v) +\frac{\langle \nabla\psi,v \rangle^2}{N_2-n}
 -\frac{\langle \nabla(\psi+\Psi),v \rangle^2}{N-n} \\
&\ge K|v|^2 +\frac{\langle \nabla\Psi,v \rangle^2}{N_1}
 +\frac{\langle \nabla\psi,v \rangle^2}{N_2-n} -\frac{\langle \nabla(\psi+\Psi),v \rangle^2}{N-n} \\
&\ge K|v|^2.
\end{align*}
This completes the proof.
$\qedd$
\end{proof}

For example, by Example~\ref{ex:KN}(c),
$((0,\infty),|\cdot|,x^N dx)$ with the Euclidean distance $|\cdot|$ satisfies $\CD(0,N)$ for $N<0$.

\begin{corollary}[Product spaces]\label{cr:prod}
Let $K \in \R$, $N_2 \ge n_2$ and $N_1<-N_2$.
If $n_i$-dimensional weighted Riemannian manifolds $(M_i,d_{g_i},\fm_i)$ satisfy $\CD(K,N_i)$ for $i=1,2$,
then the Cartesian product $(M_1 \times M_2,d_{g_1 \times g_2},\fm_1 \times \fm_2)$ satisfies $\CD(K,N_1+N_2)$.
\end{corollary}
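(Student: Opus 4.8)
The plan is to reduce the statement to the weighted Ricci curvature through the equivalence of Theorem~\ref{th:CD}. Both factors, as well as the Cartesian product, are weighted Riemannian manifolds, so I would first rewrite the hypotheses: $\CD(K,N_1)$ on $(M_1,d_{g_1},\fm_1)$ is equivalent to $\Ric_{N_1} \ge K$ by Theorem~\ref{th:CD} (recall $N_1<0$), whereas $\CD(K,N_2)$ on $(M_2,d_{g_2},\fm_2)$ is equivalent to $\Ric_{N_2} \ge K$ by the classical ($N_2 \ge n_2$) counterpart of Theorem~\ref{th:CD} (see \cite{StII}, \cite{Oint}). Writing $N:=N_1+N_2$ and $n:=n_1+n_2$, the assumption $N_1<-N_2$ gives $N<0<n$, so the product $(M_1 \times M_2, d_{g_1 \times g_2}, \fm_1 \times \fm_2)$ is an $n$-dimensional weighted manifold with $N<0$. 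It therefore suffices to establish $\Ric_N \ge K$ on the product and to apply Theorem~\ref{th:CD} once more to conclude $\CD(K,N)$.

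Next I would compute the weighted Ricci curvature of the product. Writing $\fm_i=\e^{-\psi_i}\vol_{g_i}$, the product measure is $\fm_1 \times \fm_2 = \e^{-\psi}\vol_{g_1 \times g_2}$ with $\psi(x_1,x_2)=\psi_1(x_1)+\psi_2(x_2)$. For $v=(v_1,v_2) \in T_{x_1}M_1 \oplus T_{x_2}M_2$ the Ricci tensor and the Hessian split, $\Ric(v)=\Ric^{M_1}(v_1)+\Ric^{M_2}(v_2)$ and $\Hess\psi(v,v)=\Hess\psi_1(v_1,v_1)+\Hess\psi_2(v_2,v_2)$, while $\langle \nabla\psi,v \rangle=a+b$ with $a:=\langle \nabla\psi_1,v_1 \rangle$ and $b:=\langle \nabla\psi_2,v_2 \rangle$. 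Putting $m_1:=N_1-n_1$ and $m_2:=N_2-n_2$ (so that $m_1+m_2=N-n$), the bounds $\Ric_{N_i} \ge K$ read
\[ \Ric^{M_1}(v_1)+\Hess\psi_1(v_1,v_1) \ge K|v_1|^2+\frac{a^2}{m_1}, \qquad \Ric^{M_2}(v_2)+\Hess\psi_2(v_2,v_2) \ge K|v_2|^2+\frac{b^2}{m_2}. \]
Adding these and subtracting $(a+b)^2/(m_1+m_2)$, exactly as in the computation behind Corollary~\ref{cr:weit}, gives
\[ \Ric_N(v) \ge K|v|^2 +\frac{a^2}{m_1} +\frac{b^2}{m_2} -\frac{(a+b)^2}{m_1+m_2}. \]

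The crux is the elementary inequality $\frac{a^2}{m_1}+\frac{b^2}{m_2}-\frac{(a+b)^2}{m_1+m_2} \ge 0$, which I would obtain from the identity
\[ \frac{a^2}{p}+\frac{b^2}{q}-\frac{(a+b)^2}{p+q}=\frac{(qa-pb)^2}{pq(p+q)}, \]
valid for $p,q,p+q \neq 0$. The hard part is entirely the sign bookkeeping on the denominator: here $p=m_1<0$ (since $N_1<0<n_1$), $q=m_2 \ge 0$ (since $N_2 \ge n_2$), and $p+q=N-n<0$ (since $N_1<-N_2$). When $m_2>0$ these three signs force $pq(p+q)>0$, so the right-hand side is nonnegative; when $m_2=0$ the weight $\psi_2$ is constant, hence $b=0$, and the expression collapses to $a^2/m_1-a^2/m_1=0$. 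This is precisely the range condition that fails otherwise, mirroring the sign analysis of $(a+b)^2/n \ge a^2/N-b^2/(N-n)$ in the proof of Theorem~\ref{th:BW}. Hence $\Ric_N \ge K$ on the product, and Theorem~\ref{th:CD} yields $\CD(K,N_1+N_2)$, completing the argument.
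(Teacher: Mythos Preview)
Your proof is correct and follows the same approach as the paper: both translate the hypotheses and conclusion to weighted Ricci curvature bounds via Theorem~\ref{th:CD}, split $\Ric_N(v)$ on the product, and reduce to the scalar inequality $\frac{a^2}{m_1}+\frac{b^2}{m_2}\ge\frac{(a+b)^2}{m_1+m_2}$. The paper records this inequality without comment, whereas you supply the identity $\frac{a^2}{p}+\frac{b^2}{q}-\frac{(a+b)^2}{p+q}=\frac{(qa-pb)^2}{pq(p+q)}$ and the sign check on $m_1,m_2,m_1+m_2$ (including the degenerate case $m_2=0$), which is a welcome elaboration but not a different argument.
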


\begin{proof}
Put $\fm_i=\e^{-\psi_i}\vol_{g_i}$ and $N=N_1+N_2$.
Then, for $v=(v_1,v_2) \in TM_1 \times TM_2$, we have
\begin{align*}
\Ric_N(v)
&= \sum_{i=1}^2 \{ \Ric(v_i)+\Hess\psi_i(v_i,v_i) \}
 -\frac{(\langle \nabla\psi_1,v_1 \rangle +\langle \nabla\psi_2,v_2 \rangle)^2}{N-(n_1+n_2)} \\
&\ge \Ric_{N_1}(v_1) +\Ric_{N_2}(v_2)
 \ge K(|v_1|^2 +|v_2|^2).
\end{align*}
$\qedd$
\end{proof}

\subsection{Entropic curvature-dimension condition and functional inequalities}\label{ssc:CDe}

We finally introduce another version of the curvature-dimension condition
in terms of the $(K,N)$-convexity studied in previous sections.
This notion has applications to functional inequalities similarly to the $N>0$ case in \cite{EKS}
(see also the original case of $N=\infty$ by Otto and Villani~\cite{OV}).

Let $(X,d,\fm)$ be a complete, separable metric measure space, and assume
\[ \int_X \e^{-cd(x,y)^2} \,\fm(dy)<\infty \]
for some (and hence all) $x \in X$ and all $c>0$.
This hypothesis ensures that $\Ent_{\fm}$ is never being $-\infty$ on $\cP^2(X)$
and is lower semi-continuous with respect to $W_2$.

\begin{definition}[Entropic curvature-dimension condition]\label{df:CDe}
Let $K \in \R$ and $N<0$.
A metric measure space $(X,d,\fm)$ is said to satisfy
the \emph{entropic curvature-dimension condition} $\CD^e(K,N)$
if the relative entropy $\Ent_{\fm}$ is $(K,N)$-convex on $(\cP^2(X),W_2)$.
\end{definition}

This condition was introduced in \cite{EKS} for $N>0$,
and turned out equivalent to $\CD^*(K,N)$ for \emph{essentially non-branching} spaces
in the sense of \cite[Definition~3.10]{EKS}
such as Riemannian or Finsler manifolds and Alexandrov spaces (\cite[Theorem~3.12]{EKS}).
Therefore Riemannian or Finsler manifolds with $\Ric_{\infty} \ge K$ satisfy $\CD^e(K,N)$ for all $N<0$.
For $N<0$, however, a similar argument shows only that $\CD^e(K,N)$ implies $\CD^*(K,N)$.

\begin{proposition}[$\CD^e(K,N)$ implies $\CD^*(K,N)$]\label{pr:CDe}
Let $(X,d,\fm)$ be essentially non-branching.
If $(X,d,\fm)$ satisfies $\CD^e(K,N)$ for some $K \in \R$ and $N<0$,
then it also satisfies $\CD^*(K,N)$.
\end{proposition}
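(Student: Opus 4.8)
The plan is to follow the proof of \cite[Theorem~3.12]{EKS} establishing the implication $\CD^e(K,N) \Rightarrow \CD^*(K,N)$ in the range $N>0$, and to verify that the localization argument survives the sign reversals caused by $N<0$. First I would reduce the task twice. By Lemma~\ref{lm:mono} applied to $f=\Ent_{\fm}$, the hypothesis $\CD^e(K,N)$ already yields the $(K,N')$-convexity of $\Ent_{\fm}$ for every $N'\in[N,0)$, so it suffices to prove the single inequality \eqref{eq:CD*} for each fixed such $N'$. A further standard reduction lets me assume that $\mu_0=\rho_0\fm$ and $\mu_1=\rho_1\fm$ have bounded densities and bounded supports, the general case following by approximation together with the lower semicontinuity of $S_{N'}$.

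The heart of the argument is a localization that converts the global entropy convexity into a pointwise Jacobian estimate. Since $(X,d,\fm)$ is essentially non-branching, between the reduced measures there is a unique optimal dynamical plan $\Pi\in\cP(\mathrm{Geo}(X))$ with $(e_s)_{\sharp}\Pi=\mu_s$ (here $e_s(\gamma)=\gamma(s)$), and $\Pi$ enjoys the restriction property: for any Borel set $G$ of geodesics with $m:=\Pi[G]>0$, the normalized restriction $\Pi^G:=m^{-1}\,\Pi|_G$ is again an optimal dynamical plan connecting its endpoint marginals along the restricted geodesic $s\mapsto(e_s)_{\sharp}\Pi^G$. I would take $G=G(x,y,\ve)$ consisting of geodesics $\gamma$ with $\gamma(0)$ in a small ball about $x$ and $\gamma(1)$ in a small ball about $y$, apply the $(K,N')$-convexity of $\Ent_{\fm}$---which by essential non-branching must be realized along the unique geodesic $s\mapsto(e_s)_{\sharp}\Pi^G$---to its endpoints, and then let the radii tend to $0$.

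In this limit $W_2$ of the two endpoint marginals converges to $L:=d(x,y)$, and, by non-branching, $(e_s)_{\sharp}\Pi^G$ has density $\rho_s/m$ on $e_s(G)$, so a direct computation gives $\Ent_{\fm}((e_s)_{\sharp}\Pi^G)=\langle\log\rho_s\rangle-\log m$, where $\langle\cdot\rangle$ denotes the average over the restricted marginal. The divergent term $-\log m$ enters each of the three factors $\e^{-\Ent_{\fm}(\,\cdot\,)/N'}$ as the common factor $m^{1/N'}$, which therefore cancels, while by the Lebesgue differentiation theorem $\langle\log\rho_s\rangle$ tends to $\log\rho_s(\gamma(s))$ along the central geodesic. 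What remains is exactly the pointwise inequality
\[
\rho_t\big(\gamma(t)\big)^{-1/N'} \le \sigma^{(1-t)}_{K/N'}(L)\,\rho_0\big(\gamma(0)\big)^{-1/N'} + \sigma^{(t)}_{K/N'}(L)\,\rho_1\big(\gamma(1)\big)^{-1/N'}
\]
valid for $\Pi$-almost every $\gamma$. Integrating this against $\Pi$ and using $S_{N'}(\mu_t)=\int \rho_t(\gamma(t))^{-1/N'}\,d\Pi(\gamma)$ together with $\pi=(e_0,e_1)_{\sharp}\Pi$ yields \eqref{eq:CD*}, i.e.\ the desired $\CD^*(K,N')$ inequality.

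The main obstacle is the passage to the limit in the localization: one must justify both the restriction property and the convergence $\langle\log\rho_s\rangle\to\log\rho_s(\gamma(s))$, which is precisely where essential non-branching and the reduction to bounded densities are indispensable (and precisely the point where the converse implication is not available for $N<0$, so that only the stated direction is obtained). Beyond this the work is bookkeeping of signs: because $N'<0$ the inequality \eqref{eq:wKN+} for $\Ent_{\fm}$ points in the $\le$ direction and $-1/N'>0$, so the same computation that proved the $N>0$ case reproduces \eqref{eq:CD*} with all inequalities correctly oriented, and the cancellation of $m^{1/N'}$ is insensitive to the sign of $N'$, so no new phenomenon arises there.
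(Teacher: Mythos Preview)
Your proposal is correct and follows essentially the same route as the paper: both arguments localize the entropic inequality using the essentially non-branching hypothesis (the paper explicitly points to (iii) $\Rightarrow$ (ii) of \cite[Theorem~3.12]{EKS}), obtain the pointwise density inequality
\[
\rho_t\big(\gamma(t)\big)^{-1/N'} \le \sigma^{(1-t)}_{K/N'}\big(d(x,y)\big)\,\rho_0(x)^{-1/N'} + \sigma^{(t)}_{K/N'}\big(d(x,y)\big)\,\rho_1(y)^{-1/N'},
\]
and then integrate against the optimal plan to reach \eqref{eq:CD*}. The only cosmetic difference is in how the key cancellation is phrased: the paper rewrites $\CD^e(K,N)$ via the convex function $G_t$ from Lemma~\ref{lm:sum} and applies Jensen's inequality to pass from $G_t$ of the $\pi$-averages to the $\pi$-average of $G_t$, whereas you apply the $(K,N')$-convexity directly to the restricted plan $\Pi^G$ and observe that the normalization appears as the common factor $m^{1/N'}$ on both sides and cancels. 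These are two ways of exploiting the same structural fact, namely the identity $G_t(\theta+c,\eta+c,\kappa)=c+G_t(\theta,\eta,\kappa)$, so the approaches are equivalent. Your preliminary reduction via Lemma~\ref{lm:mono} to a single $N'$ and to bounded densities is a helpful clarification that the paper's sketch leaves implicit.
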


\begin{proof}
We give only a sketchy proof.
By using the convex function $G_t$ appearing in the proof of Lemma~\ref{lm:sum},
$\CD^e(K,N)$ is written as
\[ -\frac{1}{N}\Ent_{\fm}(\mu_t)
 \le G_t\left( -\frac{\Ent_{\fm}(\mu_0)}{N}, -\frac{\Ent_{\fm}(\mu_1)}{N},
 \frac{K}{N} W_2(\mu_0,\mu_1)^2 \right). \]
Jensen's inequality then yields
\[ -\frac{1}{N}\Ent_{\fm}(\mu_t)
 \le \int_{X \times X} G_t\left( -\frac{\log\rho_0(x)}{N}, -\frac{\log\rho_1(y)}{N},
 \frac{K}{N} d(x,y)^2 \right) \pi(dxdy), \]
which implies the infinitesimal version of $\CD^*(K,N)$:
\begin{equation}\label{eq:locCD*}
\rho_t\big( \gamma(t) \big)^{-1/N} \le \sigma^{(1-t)}_{K/N}\big( d(x,y) \big) \rho_0(x)^{-1/N}
 +\sigma^{(t)}_{K/N}\big( d(x,y) \big) \rho_1(y)^{-1/N}
\end{equation}
via the localization based on the non-branching property
(see (iii) $\Rightarrow$ (ii) of \cite[Theorem~3.12]{EKS}).
Finally the integration gives $\CD^*(K,N)$.
$\qedd$
\end{proof}

\begin{remark}\label{rm:CDe}
One sees from the usage of Jensen's inequality in Proposition~\ref{pr:CDe} that
the inequality \eqref{eq:locCD*} does not imply $\CD^e(K,N)$.
In other words, $\CD^e(K,N)$ as an integrated inequality
is stronger than its infinitesimal version \eqref{eq:locCD*}.
In fact, it seems that $\Ric_N \ge K$ does not imply $\CD^e(K,N)$ for $N<0$.
This is because, according to the notations in Theorem~\ref{th:CD},
\[ \Ent_{\fm}(\mu_t) =\Ent_{\fm}(\mu_0) -\int_M \log\bJ_t \,d\mu_0 \]
and the implication from $\Ric_N \ge K$ to $\CD^e(K,N)$ for $N \ge n$ is verified
by the calculations:
\begin{align*}
\left( -\int_M \log\bJ_t \,d\mu_0 \right)''
&= \int_M \{ \Ric_{\infty}(\dot{\gamma}) +\trace(B^2) \}(t) \,d\mu_0 \\
&\ge \int_M \left\{ K|\dot{\gamma}|^2 +\frac{\langle \nabla\psi,\dot{\gamma} \rangle^2}{N-n}
 +\frac{(\trace B)^2}{n} \right\}(t) \,d\mu_0 \\
&\ge KW_2(\mu_0,\mu_1)^2
 +\int_M \frac{(\langle \nabla\psi,\dot{\gamma} \rangle -\trace B)^2}{N}(t) \,d\mu_0
\end{align*}
and
\begin{align*}
\int_M \frac{(\langle \nabla\psi,\dot{\gamma} \rangle -\trace B)^2}{N}(t) \,d\mu_0
& \ge \frac{1}{N}\left( \int_M (\langle \nabla\psi,\dot{\gamma} \rangle -\trace B)(t) \,d\mu_0 \right)^2 \\
&= \frac{1}{N} \left\{ \left( -\int_M \log\bJ_t \,d\mu_0 \right)' \right\}^2.
\end{align*}
The last inequality is the Cauchy--Schwarz inequality for which $N>0$ is necessary.
\end{remark}

From here on, we set
\[ E_N(\mu):=\exp\left( -\frac{\Ent_{\fm}(\mu)}{N} \right). \]
The condition $\CD^e(K,N)$ implies a variant of the HWI inequality
similarly to \cite[Theorem~3.28]{EKS}.
Define the (relative) \emph{Fisher information} of $\mu \in \cP^2(X)$
with respect to the reference measure $\fm$ by
\[ I_{\fm}(\mu):=|\grad \Ent_{\fm}|(\mu)^2. \]
Under mild assumptions on the space $(X,d,\fm)$ and an absolutely continuous measure
$\mu=\rho\fm$, we have
\[ I_{\fm}(\mu)=\int_X \frac{|\grad \rho|^2}{\rho} \,d\fm. \]
This representation is one of the key ingredients in the identification of two gradient flows
regarded as heat flow (see \cite{GKO}, \cite{AGShf}).

\begin{theorem}[$N$-HWI inequality]\label{th:HWI}
Let $(X,d,\fm)$ satisfy $\CD^e(K,N)$ for $K \in \R$ and $N<0$.
Then, for any $\mu_0,\mu_1 \in \cP^2(X)$ with $W_2(\mu_0,\mu_1) \le \pi\sqrt{N/K}$ if $K<0$,
we have
\begin{equation}\label{eq:HWI}
\frac{E_N(\mu_1)}{E_N(\mu_0)} \ge \fc_{K/N}\big( W_2(\mu_0,\mu_1) \big)
 +\frac{\fs_{K/N}(W_2(\mu_0,\mu_1))}{N} \sqrt{I_{\fm}(\mu_0)}.
\end{equation}
\end{theorem}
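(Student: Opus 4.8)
The plan is to transcribe the argument of \cite[Theorem~3.28]{EKS}, reading $\CD^e(K,N)$ as the $(K,N)$-convexity of $\Ent_\fm$ and $E_N=(\Ent_\fm)_N$, and tracking the sign changes forced by $N<0$. We may assume $\mu_0 \in \cD[\Ent_\fm]$ and $I_\fm(\mu_0)<\infty$: otherwise, since $\fs_{K/N}(W_2(\mu_0,\mu_1))>0$ while $N<0$, the last term of \eqref{eq:HWI} equals $-\infty$ and the inequality is vacuous. Write $d:=W_2(\mu_0,\mu_1)$ and fix a minimal geodesic $(\mu_t)_{t\in[0,1]}$ from $\mu_0$ to $\mu_1$ along which the defining inequality \eqref{eq:wKN+} of Definition~\ref{df:KN} holds for $f=\Ent_\fm$.

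First I would pass from the integrated inequality \eqref{eq:wKN+} to its differential form at the endpoint $t=0$, exactly as in the step (ii)$\Rightarrow$(iii) of Lemma~\ref{lm:KN}. Solving \eqref{eq:wKN+} for $E_N(\mu_1)$, dividing by $\sigma^{(t)}_{K/N}(d)$ and letting $t\downarrow0$ --- using $\sigma^{(t)}_{K/N}(d)\sim\frac{d}{\fs_{K/N}(d)}t$ and $1-\sigma^{(1-t)}_{K/N}(d)\sim\frac{d\,\fc_{K/N}(d)}{\fs_{K/N}(d)}t$ --- produces the analogue of \eqref{eq:wKN'},
\[
E_N(\mu_1)\ge \fc_{K/N}(d)\,E_N(\mu_0)
 +\frac{\fs_{K/N}(d)}{d}\,\liminf_{t\downarrow0}\frac{E_N(\mu_t)-E_N(\mu_0)}{t}.
\]
Since $t\mapsto E_N(\mu_t)$ need not be differentiable, a $\liminf$ replaces the derivative; this is harmless because $\fs_{K/N}(d)/d>0$, so the direction of the inequality is preserved when this quantity is later estimated from below.

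The second step bounds the one-sided rate by the Fisher information. Setting $a_t:=\Ent_\fm(\mu_t)-\Ent_\fm(\mu_0)$ and writing $E_N(\mu_t)-E_N(\mu_0)=E_N(\mu_0)(\e^{-a_t/N}-1)$, I would first note $a_t\to0$: the convexity inequality \eqref{eq:wKN+} forces $\limsup_{t\downarrow0}a_t\le0$, while the descending-slope estimate $\Ent_\fm(\mu_0)-\Ent_\fm(\mu_t)\le|\grad\Ent_\fm|(\mu_0)\,W_2(\mu_0,\mu_t)+o(t)$ together with $W_2(\mu_0,\mu_t)=td$ forces $\liminf_{t\downarrow0}a_t\ge0$. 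Hence $(\e^{-a_t/N}-1)/(-a_t/N)\to1$, and because $-1/N>0$ and $|\grad\Ent_\fm|(\mu_0)=\sqrt{I_\fm(\mu_0)}$,
\[
\liminf_{t\downarrow0}\frac{E_N(\mu_t)-E_N(\mu_0)}{t}
 =-\frac{E_N(\mu_0)}{N}\,\liminf_{t\downarrow0}\frac{a_t}{t}
 \ge \frac{d}{N}\,E_N(\mu_0)\sqrt{I_\fm(\mu_0)},
\]
the last step being the same descending-slope bound, now in the form $\liminf_{t\downarrow0}a_t/t\ge-d\sqrt{I_\fm(\mu_0)}$. Substituting into the previous display, the factors of $d$ cancel and division by $E_N(\mu_0)>0$ yields \eqref{eq:HWI}.

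The main obstacle is not any single computation but the legitimacy of the two limit passages under the weak regularity at hand; the crucial enabling fact is the continuity $a_t\to0$, which I would extract from the two-sided control above --- upper control from the $(K,N)$-convexity of $\Ent_\fm$, lower control from the finiteness of its slope. Everything else is bookkeeping of the inequality-reversing role of $N<0$, exactly as elsewhere in the paper.
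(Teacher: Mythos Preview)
Your proposal is correct and follows essentially the same route as the paper: derive the differential form of \eqref{eq:wKN+} at $t=0$ (the paper cites Lemma~\ref{lm:KN}(ii)$\Rightarrow$(iii)) and then bound the incremental quotient via the descending slope. Two minor differences: the paper keeps $\limsup_{t\downarrow 0}\frac{E_N(\mu_t)-E_N(\mu_0)}{t}$ rather than your $\liminf$---this is the sharper choice since the pointwise inequality holds for every $t$, though your weaker version suffices for the conclusion---and the paper passes to $\limsup_{t\downarrow 0}\frac{\Ent_\fm(\mu_t)-\Ent_\fm(\mu_0)}{t}$ in one line (writing an equality) and then estimates it below via $-\liminf_{t\downarrow 0}\max\{\Ent_\fm(\mu_0)-\Ent_\fm(\mu_t),0\}/t$, whereas you are more explicit in first securing $a_t\to 0$ from the two-sided control (upper from \eqref{eq:wKN+}, lower from the finite slope; one could also invoke the lower semi-continuity of $\Ent_\fm$ stated just before Definition~\ref{df:CDe} for the lower bound).
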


\begin{proof}
Let $(\mu_t)_{t \in [0,1]}$ be a minimal geodesic along which
the $(K,N)$-convexity inequality \eqref{eq:wKN+} holds.
Arguing as in Lemma~\ref{lm:KN}(ii) $\Rightarrow$ (iii)
and setting $W_2:=W_2(\mu_0,\mu_1)$ for brevity, we have
\begin{align*}
E_N(\mu_1) &\ge \fc_{K/N}(W_2) E_N(\mu_0)
 +\frac{\fs_{K/N}(W_2)}{W_2} \limsup_{t \downarrow 0}\frac{E_N(\mu_t)-E_N(\mu_0)}{t} \\
&= \fc_{K/N}(W_2) E_N(\mu_0)
 -\frac{\fs_{K/N}(W_2)}{W_2} \frac{E_N(\mu_0)}{N}
 \limsup_{t \downarrow 0}\frac{\Ent_{\fm}(\mu_t)-\Ent_{\fm}(\mu_0)}{t}.
\end{align*}
Then we deduce \eqref{eq:HWI} from
\begin{align*}
\limsup_{t \downarrow 0}\frac{\Ent_{\fm}(\mu_t)-\Ent_{\fm}(\mu_0)}{t}
&\ge -\liminf_{t \downarrow 0}\frac{\max\{\Ent_{\fm}(\mu_0)-\Ent_{\fm}(\mu_t),0\}}{t} \\
&\ge -|\grad \Ent_{\fm}|(\mu_0) W_2(\mu_0,\mu_1).
\end{align*}
$\qedd$
\end{proof}

In the case where $K>0$ and $\fm \in \cP^2(X)$, \eqref{eq:HWI} implies
the following generalizations of well known inequalities
(see \cite[Corollaries~3.31, 3.29]{EKS} for the $N>0$ case).
Note that $\Ent_{\fm}$ is nonnegative in this case by Jensen's inequality.

\begin{corollary}[$N$-Talagrand inequality]\label{cr:Tal}
Assume that $\fm \in \cP^2(X)$ and $(X,d,\fm)$ satisfies $\CD^e(K,N)$
with $K>0$ and $N<0$.
Then, for any $\mu \in \cP^2(X)$, we have
\[ \Ent_{\fm}(\mu) \ge -N\log\bigg[ \cosh\bigg( \sqrt{-\frac{K}{N}} W_2(\fm,\mu) \bigg) \bigg]. \]
\end{corollary}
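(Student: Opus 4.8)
The plan is to specialize the $N$-HWI inequality (Theorem~\ref{th:HWI}) to the base point $\mu_0=\fm$. This is legitimate precisely because we assume $\fm\in\cP^2(X)$, so $\mu_0=\fm$ is an admissible competitor, and because $K>0$ the side condition $W_2(\mu_0,\mu_1)\le\pi\sqrt{N/K}$ in Theorem~\ref{th:HWI} is vacuous, making \eqref{eq:HWI} available for every $\mu\in\cP^2(X)$. The first gain from this choice is that $\fm$ has density $\rho\equiv 1$ with respect to itself, whence $\Ent_{\fm}(\fm)=0$ and therefore $E_N(\fm)=\exp(0)=1$.

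The key step I would establish is that the Fisher information contribution drops out at $\mu_0=\fm$. Since $\fm$ is a probability measure, Jensen's inequality (as noted just before the statement) gives $\Ent_{\fm}(\nu)\ge 0$ for all $\nu\in\cP^2(X)$, with equality at $\nu=\fm$; thus $\fm$ is a global minimizer of $\Ent_{\fm}$. Reading off the descending slope at a minimizer yields $|\grad\Ent_{\fm}|(\fm)=0$, so $I_{\fm}(\fm)=|\grad\Ent_{\fm}|(\fm)^2=0$. Substituting $E_N(\fm)=1$ and $I_{\fm}(\fm)=0$ into \eqref{eq:HWI} collapses it to
\[ E_N(\mu)\ge \fc_{K/N}\big( W_2(\fm,\mu) \big). \]

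Finally I would unwind the definitions. Because $K>0$ and $N<0$ we have $K/N<0$, so $\fc_{K/N}(\theta)=\cosh(\sqrt{-K/N}\,\theta)$; writing $E_N(\mu)=\exp(-\Ent_{\fm}(\mu)/N)$ and taking logarithms gives
\[ -\frac{\Ent_{\fm}(\mu)}{N}\ge \log\Big[ \cosh\big( \sqrt{-K/N}\,W_2(\fm,\mu) \big) \Big]. \]
Multiplying through by $N<0$ reverses the inequality and reproduces exactly the claimed bound. The only delicate bookkeeping is this last sign reversal (and the convention that $\Ent_{\fm}(\mu)=\infty$ makes the bound trivial), but that is routine; the substance of the argument is entirely the choice $\mu_0=\fm$ together with the vanishing of $I_{\fm}(\fm)$, after which the conclusion is immediate from Theorem~\ref{th:HWI}. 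I therefore do not expect any genuine obstacle beyond correctly tracking the orientation of inequalities forced by $N<0$.
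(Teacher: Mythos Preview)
Your proof is correct and follows exactly the paper's approach: apply the $N$-HWI inequality \eqref{eq:HWI} with $\mu_0=\fm$ and $\mu_1=\mu$, use $\Ent_{\fm}(\fm)=I_{\fm}(\fm)=0$, and unwind the definitions. Your write-up simply supplies more justification (e.g., why the slope vanishes at the minimizer and why the $K<0$ side condition is vacuous) than the paper's terse three-line argument.
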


This is a nontrivial estimate since the RHS is nonnegative.

\begin{proof}
We apply \eqref{eq:HWI} to $\mu_0=\fm$ and $\mu_1=\mu$.
Since $\Ent_{\fm}(\fm)=I_{\fm}(\fm)=0$, we find
\[ E_N(\mu) \ge \fc_{K/N}\big( W_2(\fm,\mu) \big)
 =\cosh \bigg( \sqrt{-\frac{K}{N}} W_2(\fm,\mu) \bigg). \]
This is equivalent to the desired inequality.
$\qedd$
\end{proof}

\begin{corollary}[$N$-logarithmic Sobolev inequality]\label{cr:Sob}
Let $(X,d,\fm)$ be as in Corollary~$\ref{cr:Tal}$.
Then, for any $\mu \in \cP^2(X)$ satisfying
\begin{equation}\label{eq:Soba}
\fc_{K/N}\big( W_2(\mu,\fm) \big) +\frac{\fs_{K/N}(W_2(\mu,\fm))}{N} \sqrt{I_{\fm}(\mu)}>0,
\end{equation}
we have
\[ KN\bigg\{ \exp\left( \frac{2}{N}\Ent_{\fm}(\mu) \right) -1 \bigg\} \le I_{\fm}(\mu). \]
\end{corollary}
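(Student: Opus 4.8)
The plan is to deduce the inequality directly from the $N$-HWI inequality \eqref{eq:HWI}, applied to the pair $\mu_0=\mu$ and $\mu_1=\fm$. Since $\Ent_{\fm}(\fm)=0$ we have $E_N(\fm)=1$, so the left-hand side of \eqref{eq:HWI} becomes $1/E_N(\mu)=\exp(\Ent_{\fm}(\mu)/N)$, while the right-hand side retains $\sqrt{I_{\fm}(\mu)}$. Writing $b:=\exp(\Ent_{\fm}(\mu)/N)$, $W:=W_2(\mu,\fm)$ and $c:=\sqrt{-K/N}>0$ (recall $K>0$, $N<0$), so that $\fc_{K/N}(W)=\cosh(cW)$ and $\fs_{K/N}(W)=c^{-1}\sinh(cW)$, the inequality \eqref{eq:HWI} reads
\[
b \ge \cosh(cW)+\frac{\sinh(cW)}{cN}\sqrt{I_{\fm}(\mu)}.
\]
I would first record the sign facts that will govern every manipulation: $b\in(0,1]$, because $\Ent_{\fm}(\mu)\ge 0$ (by Jensen, since $\fm\in\cP^2(X)$ is a probability measure) and $N<0$; and $cN<0$. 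The case $\mu=\fm$ being trivial, I may assume $W>0$, so that $\sinh(cW)>0$.

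The key step is to isolate the Fisher information. Moving $b$ across and dividing out the factor $-cN=\sqrt{-KN}>0$ turns the displayed inequality into
\[
\cosh(cW)-b \le \frac{\sinh(cW)}{\sqrt{-KN}}\,\sqrt{I_{\fm}(\mu)},
\]
whose two sides are both nonnegative (as $b\le 1\le\cosh(cW)$), so squaring is legitimate. Using $c^2N^2=-KN$ this yields
\[
I_{\fm}(\mu) \ge (-KN)\,\frac{(\cosh(cW)-b)^2}{\sinh^2(cW)}.
\]

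It then remains only to prove the elementary bound $(\cosh(cW)-b)^2\ge (1-b^2)\sinh^2(cW)$. Setting $u:=cW$ and using $\cosh^2u-\sinh^2u=1$, the difference of the two sides collapses to $(b\cosh u-1)^2\ge 0$; this recognition of a perfect square is the one genuinely computational point, and it is immediate. Combining the two displays gives $I_{\fm}(\mu)\ge(-KN)(1-b^2)=KN(b^2-1)=KN\{\exp(2\Ent_{\fm}(\mu)/N)-1\}$, which is the assertion. The hypothesis \eqref{eq:Soba} serves to keep us in the nondegenerate regime: it guarantees $\Ent_{\fm}(\mu)<\infty$ (so $b>0$) and that the $N$-HWI bound is being used where it carries information, i.e. $\sqrt{I_{\fm}(\mu)}$ is below the threshold at which the right-hand side of \eqref{eq:HWI} changes sign. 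I do not expect any deep obstacle; the only thing requiring genuine care is the consistent bookkeeping of signs forced by $K>0$ and $N<0$, which dictates the directions of the rearrangement and the squaring.
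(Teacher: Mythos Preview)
Your argument is correct and follows the same overall strategy as the paper: apply the $N$-HWI inequality \eqref{eq:HWI} with $\mu_0=\mu$, $\mu_1=\fm$, and then manipulate algebraically. The difference lies in the algebra. The paper squares the HWI inequality $\exp(\Ent_\fm(\mu)/N)\ge\fc+\frac{\fs}{N}\sqrt{I_\fm(\mu)}$ directly---which is why it must invoke \eqref{eq:Soba} to guarantee the right-hand side is positive---and then absorbs the cross term via the AM--GM bound $2\fc\fs\sqrt{I_\fm(\mu)}/(-N)\le K\fs^2/(-N)+\fc^2 I_\fm(\mu)/(-KN)$ together with the identity $\fc^2+(K/N)\fs^2=1$. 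You instead rearrange first to $\cosh(cW)-b\le\frac{\sinh(cW)}{\sqrt{-KN}}\sqrt{I_\fm(\mu)}$, observe that both sides are nonnegative because $b\le 1\le\cosh(cW)$, and only then square; the remaining estimate reduces to the perfect-square identity $(\cosh u-b)^2-(1-b^2)\sinh^2 u=(1-b\cosh u)^2$.

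Your route has a small bonus: the squaring step no longer depends on \eqref{eq:Soba}, so your argument actually proves the inequality for every $\mu\in\cP^2(X)$ with $\Ent_\fm(\mu)<\infty$ (needed so that $I_\fm(\mu)=|\nabla^-\Ent_\fm|(\mu)^2$ is defined and HWI applies). Your closing remark that \eqref{eq:Soba} ``guarantees $\Ent_\fm(\mu)<\infty$'' is therefore slightly off---finiteness of the entropy is an implicit prerequisite for $I_\fm(\mu)$ to make sense, not a consequence of \eqref{eq:Soba}---but this does not affect the validity of the proof.
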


Observe that $\exp(2\Ent_{\fm}(\mu)/N) \le 1$ and hence the LHS is nonnegative.

\begin{proof}
We apply \eqref{eq:HWI} in the reverse direction, namely $\mu_0=\mu$ and $\mu_1=\fm$.
This yields
\[ \exp\left( \frac{\Ent_{\fm}(\mu)}{N} \right) \ge \fc +\frac{\fs}{N}\sqrt{I_{\fm}(\mu)}, \]
where we abbreviated as $\fc:=\fc_{K/N}(W_2(\mu,\fm))$ and
$\fs:=\fs_{K/N}(W_2(\mu,\fm))$.
The RHS is positive by assumption, thus we have
\[ \exp\left( \frac{2}{N}\Ent_{\fm}(\mu) \right)
 \ge \fc^2 +\frac{2}{N} \fc \fs \sqrt{I_{\fm}(\mu)} +\frac{\fs^2}{N^2}I_{\fm}(\mu). \]
Since
\[ -\frac{2}{-N}\fs \fc \sqrt{I_{\fm}(\mu)}
 \ge -\bigg\{ K\left( \frac{\fs}{\sqrt{-N}} \right)^2
 +K^{-1} \left( \frac{\fc \sqrt{I_{\fm}(\mu)}}{\sqrt{-N}} \right)^2 \bigg\}
 =\frac{K \fs^2}{N} +\frac{\fc^2 I_{\fm}(\mu)}{KN}, \]
we obtain
\[ \exp\left( \frac{2}{N}\Ent_{\fm}(\mu) \right)
 \ge \left( \fc^2 +\frac{K}{N}\fs^2 \right) \left( 1+\frac{I_{\fm}(\mu)}{KN} \right)
 =1+\frac{I_{\fm}(\mu)}{KN} \]
and complete the proof.
$\qedd$
\end{proof}

The assumption \eqref{eq:Soba} is rewritten as
\[ N<-\frac{\fs_{K/N}(W_2(\mu,\fm))}{\fc_{K/N}(W_2(\mu,\fm))} \sqrt{I_{\fm}(\mu)}, \]
which is achieved by letting $N$ smaller but then $\CD^e(K,N)$ is getting stronger.

{\small

}

\end{document}